\documentclass[11pt]{article}

\usepackage{amsmath,amssymb,bm,booktabs,array,mathtools,graphicx,url}
\usepackage{amsthm}
\usepackage[margin=1in]{geometry}
\usepackage{microtype}
\usepackage{comment}
\usepackage{placeins}

\numberwithin{equation}{section}

\newtheorem{theorem}{Theorem}[section]
\newtheorem{lemma}[theorem]{Lemma}
\newtheorem{proposition}[theorem]{Proposition}
\newtheorem{corollary}[theorem]{Corollary}
\theoremstyle{remark}
\newtheorem{remark}[theorem]{Remark}

\newcommand{\R}{\mathbb{R}}
\newcommand{\C}{\mathbb{C}}
\newcommand{\Pp}{\mathbb{P}}
\newcommand{\ee}{\mathrm{e}}
\newcommand{\Rea}{\operatorname{Re}}
\newcommand{\Ima}{\operatorname{Im}}

\begin{document}

\title{Uniform Chebyshev asymptotics for repeated-pole rational approximation of the exponential}
\author{Fei Xue\thanks{School of Mathematical and Statistical Sciences, Clemson University, Clemson, South Carolina 29634, USA. Email: fxue@clemson.edu} \and Tianqi Zhang\thanks{School of Data Sciences, Zhejiang University of Finance and Economics, 18 Xueyuan Street, Hangzhou, Zhejiang 310018, China. Email: tianqz@zufe.edu.cn}}
\date{}

\maketitle

\begin{abstract}
We study uniform approximation of $\exp(tz)$, $t>0$, on $(-\infty,0]$ by rational functions $P_m(z)/(q_m-z)^m$ with a prescribed repeated pole $q_m>0$. A M\"obius transformation reduces the problem to polynomial approximation of $F_\lambda(x)=\exp\!\left(-\lambda\frac{1-x}{1+x}\right)$ on $[-1,1]$, where $\lambda=tq_m$. We derive a uniform two-saddle asymptotic formula for the Chebyshev coefficients, including explicit amplitude and phase and a relative remainder for each localized complex saddle contribution, covering fixed, sublinear, and linear pole scalings away from saddle coalescence. Because the two saddle contributions can cancel in a single coefficient, we pass to a growing block of neighboring coefficients and prove that the whole block cannot cancel. This transfers the coefficient asymptotics to approximation errors. For $q_m=(\alpha/t)m$, $0<\alpha<3\sqrt3/2$, the best uniform error has two-sided order $m^{-1/2}H_e(\alpha)^m$; at the optimal ratio $\alpha=1/\sqrt2$ this becomes $m^{-1/2}(\sqrt2-1)^m$. The normalized Chebyshev-weighted $L^2$ projection error has an explicit bounded oscillatory profile. These prefactor-resolved estimates yield a two-term precision-to-work law and quantify mismatch between pole-design and stopping degrees. Finally, the scalar error gives an exact worst-case matrix-action benchmark for self-adjoint negative semidefinite matrices and dimension-independent shift-and-invert Krylov bounds.
\end{abstract}
\noindent\textbf{Keywords:} rational approximation; matrix exponential; repeated pole; Chebyshev approximation; saddle-point asymptotics; restricted denominator.\par
\smallskip
\noindent\textbf{2020 Mathematics Subject Classification:} 41A20, 41A60, 65F60, 30E10, 41A10, 65D15.\par
\medskip

\section{Introduction}
\label{sec:intro}

The basic problem is to approximate $u(t)=\ee^{tA}b$ when $A=A^*\le0$ may have eigenvalues far down the negative real axis. Polynomial approximation must resolve the whole spectral interval, which becomes increasingly demanding as that interval grows under mesh refinement. A rational method can instead place a pole on the positive side of the scalar $z$-plane and repeatedly apply the same shifted inverse. When all poles coincide, one factorization or one preconditioner can be reused. This computational economy is the reason for studying the restricted class $P_m(z)/(q_m-z)^m$, $z\le0$.
Such repeated-pole constructions appear in matrix-exponential restricted-denominator methods; see \cite{Saad1992,EshofHochbruck2006,Druskin2009,Guettel2013,MoretNovati2004,Novati2011,RostamiXue2018}.

This problem should not be confused with unrestricted rational minimax approximation, where all poles may move independently in the complex plane. The latter has its own classical theory, including \cite{CodyMeinardusVarga1969,Aptekarev2002}.  Here the repeated pole is the constraint and also the computational advantage.  Saff, Sch\"onhage, and Varga constructed an explicit geometrically convergent repeated-pole family for $\ee^{-x}$ on $[0,\infty)$~\cite{SaffSchonhageVarga1976}; real-pole restrictions were studied further by Lau~\cite{Lau1977}, Kaufman and Taylor~\cite{KaufmanTaylor1978}, and Borwein~\cite{Borwein1983}.

For the concentrated class, the root-rate question is classical. Andersson's theorem \cite[Sec.~2, p.~86]{Andersson1981} treats arbitrary positive sequences $\eta_m$ in denominators $(x+m\eta_m)^m$ and determines the $m$th-root error through a cubic saddle equation. For a fixed ratio, the optimum is $\eta=1/\sqrt2$ with root factor $\sqrt2-1$, corresponding here to $q_m\sim m/(\sqrt2\,t)$. Our purpose is not to re-prove that root-rate optimum, but to resolve what the root limit suppresses: the prefactor, the oscillatory phase, and the transition from fixed through sublinear to linearly moving poles.

A M\"obius transformation turns the repeated-pole rational problem into ordinary polynomial approximation on $[-1,1]$.  The transformed exponential is $C^\infty$-flat at one endpoint.  This places the problem in the same family as mapped Chebyshev approximation on unbounded intervals studied by Boyd~\cite{Boyd1982,Boyd1987Semi,Boyd1996,Boyd2009}: a fixed map gives a stretched-exponential coefficient scale, whereas a degree-dependent map can produce geometric decay.  Recent work of G\"uttel and Shao \cite{GuettelShao2025,GuettelShao2026} has renewed interest in time-uniform concentrated real-pole families; here we instead resolve coefficient-level saddle asymptotics and prefactor-resolved error laws.

The key is to choose variables in which the fixed-pole limit is not singular. We use $n$ for a Chebyshev coefficient index and $m$ for an approximation degree, and set $\alpha=\lambda/n$ and $\Lambda=(\lambda n^2)^{1/3}$ with $\lambda=tq$. In the corresponding scaled saddle variable, the two relevant complex saddles stay separated and nondegenerate as $\alpha\downarrow0$. The same local geometry therefore covers both a fixed physical pole and a pole proportional to the degree. Returning to the original normalization then recovers Andersson's cubic and saddle action directly.

There are three main steps in the paper. First, we derive a two-saddle Chebyshev coefficient formula whose amplitude, phase, and relative saddle remainder are uniform for $0<\lambda/n\le\alpha_0<3\sqrt3/2$, provided $\Lambda\to\infty$. Second, we turn the oscillatory coefficient information into actual approximation errors. A single coefficient may nearly vanish, so we use a short block of neighboring coefficients and show that the whole block cannot cancel. For the exact linear family this yields the prefactor-resolved two-sided law $E_m(\alpha m)\asymp m^{-1/2}H_e(\alpha)^m$ and an explicit oscillatory profile for the weighted $L^2$ projection error. Third, we invert this two-sided law. At the classical optimum this gives
\[
 m_\varepsilon^*=
 \frac{\log(1/\varepsilon)}{\operatorname{arsinh}(1)}
 -\frac{\log\log(1/\varepsilon)}{2\operatorname{arsinh}(1)}+O(1),
\]
and it also quantifies the penalty when the pole is selected using a nominal degree but the computation stops earlier or later.

The matrix consequences require no new asymptotics.  For self-adjoint negative semidefinite matrices, the spectral theorem makes the scalar half-line error the worst-case matrix-action error for each fixed rational approximant.  The same repeated-pole class generates the shift-and-invert Krylov search space, so the scalar work laws translate into dimension-independent solve counts.

The remainder of the paper is organized as follows. Conceptually, the proof follows \emph{M\"obius map $\to$ uniform saddle formula $\to$ noncancelling coefficient block $\to$ prefactor-resolved error law $\to$ pole/work design}. Section~\ref{sec:mobius} sets up the exact polynomial--rational correspondence. Sections~\ref{sec:uniform-saddle}--\ref{sec:transition} analyze the two saddles and their action, Section~\ref{sec:error} transfers the oscillatory coefficients to approximation errors, and Sections~\ref{sec:design}--\ref{sec:numerics} develop the work laws, matrix consequences, and numerical verification. Details for the two uniform saddle estimates and local robust-design expansion appear in Appendices~\ref{app:saddle-details}--\ref{app:robust-expansion}.

\section{M\"obius reduction and approximation setting}
\label{sec:mobius}

The repeated-pole constraint becomes transparent after one exact change of variables. The M\"obius map below sends the half-line to $[-1,1]$ and turns the restricted rational class into ordinary polynomials of the same degree. Consequently, the scalar approximation problem can be analyzed on a fixed compact interval and then transferred back to the half-line without changing the error.

Fix $t>0$ and write $f_t(z)=\ee^{tz}$ for $-\infty<z\le0$. For a prescribed pole $q>0$ and an integer $m\ge0$, let the repeated-pole or restricted-denominator rational function space be
\begin{equation}\label{eq:Rmq-space-front}
 \mathcal R_m(q)
 :=\left\{\frac{P_m(z)}{(q-z)^m}:\deg P_m\le m\right\}.
\end{equation}
Thus $q$ is the only possible finite pole and has multiplicity at most $m$.  The terminology ``repeated pole'' is used to distinguish this space from rational classes with $m$ freely distributed poles.  In the equivalent positive-half-line problem for $\ee^{-tz}$, $z\ge0$, the pole is located at $-q$.

Consider the M\"obius transformation
\begin{equation}\label{eq:mobius}
 x=\frac{q+z}{q-z},\qquad
 z=-q\frac{1-x}{1+x}.
\end{equation}
It maps $(-\infty,0]$ bijectively onto $(-1,1]$. After adjoining the point $z=-\infty$, the compactified half-line maps onto $[-1,1]$, with $z=0$ corresponding to $x=1$ and $z=-\infty$ to $x=-1$. With $\lambda=tq$, the transformed exponential is
$F_\lambda(x):=f_t(-q(1-x)/(1+x))=\exp(-\lambda(1-x)/(1+x))$ for $-1\le x\le1$, where $F_\lambda(-1)$ is understood as the limiting value $0$.

\begin{proposition}[Exact polynomial--rational isometry]\label{prop:mobius-isometry}
For every $m\ge0$, the map
\begin{equation}\label{eq:restricted-denominator-map}
 p_m(x)\longmapsto
 r_{m,q}(z)
 :=p_m\!\left(\frac{q+z}{q-z}\right)
\end{equation}
is a bijection from $\Pp_m$ onto $\mathcal R_m(q)$.  Moreover,
\begin{equation}\label{eq:mobius-error-isometry}
 \sup_{z\le0}|\ee^{tz}-r_{m,q}(z)|
 =\sup_{-1\le x\le1}|F_\lambda(x)-p_m(x)|.
\end{equation}
Consequently, if
\begin{align}
 E_m(\lambda)
 &:=\inf_{p\in\Pp_m}\|F_\lambda-p\|_\infty,
 \label{eq:best-poly-error-front}\\
 \mathcal E_m^{\rm rat}(t,q)
 &:=\inf_{r\in\mathcal R_m(q)}
   \sup_{z\le0}|\ee^{tz}-r(z)|,
 \label{eq:best-rational-error-front}
\end{align}
then
\begin{equation}\label{eq:rational-polynomial-isometry-front}
 \boxed{\mathcal E_m^{\rm rat}(t,q)=E_m(tq).}
\end{equation}
\end{proposition}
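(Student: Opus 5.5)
The argument splits into an algebraic part (the bijection) and an analytic part (the equality of suprema); the boxed identity then follows by taking infima.

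For the bijection, I will substitute $x=(q+z)/(q-z)$ into $p_m(x)=\sum_{k=0}^m c_k x^k$ and clear denominators:
\[
 p_m\!\left(\frac{q+z}{q-z}\right)=\frac{\sum_{k=0}^m c_k (q+z)^k(q-z)^{m-k}}{(q-z)^m}.
\]
The numerator has degree at most $m$, so the map sends $\Pp_m$ into $\mathcal R_m(q)$, and it is clearly linear. Injectivity is immediate. If $r_{m,q}\equiv0$ on $z\le0$, then $p_m$ vanishes on the image $(-1,1]$, which is an infinite set, so $p_m=0$. For surjectivity I will count dimensions: both spaces have dimension $m+1$. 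Here $\dim\mathcal R_m(q)=m+1$ because $P\mapsto P/(q-z)^m$ is injective from polynomials of degree at most $m$. Alternatively, I can invert the map explicitly. Given $P_m(z)/(q-z)^m$, substitute $z=-q(1-x)/(1+x)$, so that $q-z=2q/(1+x)$. This gives $p_m(x)=(2q)^{-m}(1+x)^mP_m\!\left(-q\frac{1-x}{1+x}\right)$, which is a polynomial of degree at most $m$.

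For the isometry \eqref{eq:mobius-error-isometry}, I will use the fact that \eqref{eq:mobius} is a bijection from $(-\infty,0]$ onto $(-1,1]$. Under this bijection, $\ee^{tz}=F_\lambda(x)$ by the definition of $F_\lambda$ with $\lambda=tq$, and $r_{m,q}(z)=p_m(x)$ by construction. The supremum over $z\le0$ therefore equals the supremum of $|F_\lambda-p_m|$ over $(-1,1]$. To extend this to $[-1,1]$, note that $F_\lambda$ extends continuously to $x=-1$ with value $0$, because $\lambda(1-x)/(1+x)\to+\infty$. Since $p_m$ is also continuous, $|F_\lambda-p_m|$ is continuous on $[-1,1]$, and adding the single endpoint $x=-1$ does not change the supremum. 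This continuity at $x=-1$ is the only point requiring care, and it is the main, though mild, obstacle. Equivalently, the endpoint corresponds to the value at $z=-\infty$, where $\ee^{tz}\to0$ and $r_{m,q}(z)\to p_m(-1)$.

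Finally, \eqref{eq:rational-polynomial-isometry-front} follows by taking the infimum over $p\in\Pp_m$ on the right-hand side of \eqref{eq:mobius-error-isometry}. Because the map is a bijection onto $\mathcal R_m(q)$, this is the same as taking the infimum over $r\in\mathcal R_m(q)$ on the left-hand side, which gives $\mathcal E_m^{\rm rat}(t,q)=E_m(tq)$.
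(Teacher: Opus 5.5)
Your proposal is correct and follows the paper's proof. Clearing denominators gives the forward inclusion, substituting the inverse M\"obius map (your explicit formula via $q-z=2q/(1+x)$) gives the reverse direction, the bijection $(-\infty,0]\to(-1,1]$ together with continuity of $|F_\lambda-p_m|$ at $x=-1$ gives equality of the suprema, and taking infima across the bijection yields $\mathcal E_m^{\rm rat}(t,q)=E_m(tq)$; your separate injectivity and dimension-count remarks are valid but redundant once the explicit inverse is available.
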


\begin{proof}
For $p_m\in\Pp_m$, clearing the denominator in \eqref{eq:restricted-denominator-map} gives a polynomial in $z$ of degree at most $m$, so the image lies in $\mathcal R_m(q)$. Conversely, substituting the inverse M\"obius map into $P_m(z)/(q-z)^m$ gives a polynomial of degree at most $m$ in $x$; hence the map is bijective. Since \eqref{eq:mobius} bijects $(-\infty,0]$ with $(-1,1]$ and both sides extend continuously to $x=-1$, the sup norms agree. Taking infima proves \eqref{eq:rational-polynomial-isometry-front}.
\end{proof}

The same isometry applies to any target whose M\"obius transform extends continuously to $[-1,1]$; the explicit saddle geometry below is specific to the exponential. The transformed function is analytic for $-1<x\le1$ and is $C^\infty$-flat at $x=-1$: every derivative there vanishes, while the complex continuation has an essential singularity at that endpoint.  This is the source of stretched-exponential Chebyshev decay when $\lambda$ is fixed (and so is the pole $q$), consistent with the general flat-endpoint mechanism studied in \cite{Boyd1982,Boyd1996}.  The crucial point in this study is that $\lambda$ need not remain fixed as the approximation degree grows.

For $n\ge1$, let $a_n(\lambda)$ be the Chebyshev coefficient of $F_\lambda$ under the convention
\begin{equation}\label{eq:cheb-series-front}
 F_\lambda(x)=\frac{a_0(\lambda)}2+
 \sum_{n=1}^\infty a_n(\lambda)T_n(x).
\end{equation}
Thus
\begin{equation}\label{eq:cheb-coef-x}
 a_n(\lambda)
 =\frac{2}{\pi}\int_{-1}^1
 \frac{F_\lambda(x)T_n(x)}{\sqrt{1-x^2}}\,dx.
\end{equation}
With $x=\cos\theta$ and $T_n(\cos\theta)=\cos(n\theta)$, this becomes
\begin{equation}\label{eq:cheb-coef-original}
 a_n(\lambda)=\frac{2}{\pi}\int_0^\pi
 \exp\!\left(-\lambda\tan^2\frac{\theta}{2}\right)
 \cos(n\theta)\,d\theta.
\end{equation}
The coefficient index $n$ and the approximation degree $m$ will be kept distinct.  Section~\ref{sec:uniform-saddle} analyzes sequences $\lambda=\lambda_n$ through the ratio $\lambda_n/n$; Sections~\ref{sec:error}--\ref{sec:design} then apply the result to approximation sequences $\lambda=\lambda_m=tq_m$.  Thus a fixed physical pole corresponds to fixed $\lambda$, whereas a linearly moving pole $q_m=(\alpha/t)m$ corresponds to $\lambda_m/m=\alpha$.  This bookkeeping unifies the fixed-pole and concentrated-pole limits in one asymptotic construction.

\section{Uniform saddle-point analysis of the Chebyshev coefficients}
\label{sec:uniform-saddle}

Two limiting pictures are familiar. With $\lambda$ fixed, the dominant saddle drifts toward the flat endpoint and the coefficient has stretched-exponential decay. When $\lambda$ is proportional to $n$, the saddles remain away from the endpoint and the decay is geometric. Separate saddle calculations would describe both limits but would obscure the transition between them. We rescale before expanding. In the scaled variable, both limits are governed by the same pair of nondegenerate saddles, and the apparently singular limit $\lambda/n\downarrow0$ becomes regular.

\subsection{An exact real-line integral and the scaled phase}
\label{subsec:real-line}

The first step is exact rather than asymptotic: rewrite the coefficient as a real-line integral whose only genuine pole lies above the real axis.  This will let us move the contour through the two lower saddles without crossing a singularity.

\begin{lemma}[Exact real-line representation]\label{lem:real-line}
For every $\lambda>0$ and integer $n\ge1$,
\begin{equation}\label{eq:real-line}
 a_n(\lambda)
 =(-1)^n\frac{2}{\pi}
 \int_{-\infty}^{\infty}
 \frac{\ee^{-\lambda y^2}}{1+y^2}
 \left(\frac{y+i}{y-i}\right)^n\,dy.
\end{equation}
The only pole of the integrand in \eqref{eq:real-line} is at $y=i$. At $y=-i$, the simplified integrand is analytic and nonzero for $n=1$, and has a zero of order $n-1$ for $n\ge2$.
\end{lemma}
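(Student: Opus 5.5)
The plan is to derive \eqref{eq:real-line} directly from the trigonometric form \eqref{eq:cheb-coef-original} with the half-angle substitution $y=\tan(\theta/2)$. The integrand already contains $\tan^2(\theta/2)$, and this substitution turns it into the Gaussian factor $\ee^{-\lambda y^2}$. Under it, $\theta\in[0,\pi)$ maps onto $y\in[0,\infty)$, with $d\theta=2\,dy/(1+y^2)$. The endpoint $\theta=\pi$ has measure zero, and the integrand decays like $\ee^{-\lambda y^2}$, so no boundary issue arises.

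The trigonometric factor is handled by writing $\ee^{i\theta}$ as a M\"obius expression in $y$. Since $\ee^{i\theta}=\ee^{i\theta/2}/\ee^{-i\theta/2}$ and $\tan(\theta/2)=y$, we get $\ee^{i\theta}=(1+iy)/(1-iy)$. Using $1+iy=i(y-i)$ and $1-iy=-i(y+i)$, this becomes $\ee^{i\theta}=-(y-i)/(y+i)$. Hence
\[
\ee^{-in\theta}=(-1)^n\left(\frac{y+i}{y-i}\right)^n .
\]
Next, replace $\cos(n\theta)$ by $\ee^{-in\theta}$ and extend the integral to the whole line. Negative $y$ corresponds to $-\theta$, and under $y\mapsto -y$ the Gaussian and $1/(1+y^2)$ are even. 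Meanwhile $\ee^{-in\theta}$ goes to its complex conjugate. Therefore the integral over $\R$ of $\ee^{-\lambda y^2}(1+y^2)^{-1}\ee^{-in\theta}$ equals twice the integral over $[0,\infty)$ of $\ee^{-\lambda y^2}(1+y^2)^{-1}\cos(n\theta)$; the imaginary part cancels because it is odd. Collecting the constants, $\frac{2}{\pi}\cdot 2\cdot\frac12=\frac{2}{\pi}$, which gives exactly \eqref{eq:real-line}. The sign bookkeeping in $\ee^{i\theta}=-(y-i)/(y+i)$ is the only step needing care, and I would check it at $y=0$ (where $\theta=0$) and as $y\to\infty$ (where $\theta\to\pi$).

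For the singularity statement, write $1+y^2=(y-i)(y+i)$ and simplify the rational part of the integrand to
\[
\frac{(y+i)^{n-1}}{(y-i)^{n+1}} .
\]
Since $\ee^{-\lambda y^2}$ is entire and nonvanishing, the only pole is at $y=i$, of order $n+1$. At $y=-i$ the simplified integrand is analytic. For $n=1$ its value there is $\ee^{\lambda}/(-2i)^2\neq0$. For $n\ge2$ it has a zero of order exactly $n-1$, since the remaining factors are analytic and nonzero at $-i$. No real obstacle is expected; the whole proof is an exact computation.
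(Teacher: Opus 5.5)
Your proposal is correct and is essentially the paper's proof. The paper reflects with $t=\pi-\theta$ and then sets $y=\cot(t/2)$, which is the same substitution $y=\tan(\theta/2)$ you use; there the factor $(-1)^n$ comes from $\cos(n(\pi-t))=(-1)^n\cos(nt)$ rather than from your identity $\ee^{i\theta}=-(y-i)/(y+i)$, while the conjugate-symmetry extension to $\R$ and the factorization $(y+i)^{n-1}/(y-i)^{n+1}$ match the paper exactly.
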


\begin{proof}
Set $t=\pi-\theta$ in \eqref{eq:cheb-coef-original}.  Since $\tan((\pi-t)/2)=\cot(t/2)$ and $\cos(n(\pi-t))=(-1)^n\cos(nt)$, we obtain
\[
 a_n(\lambda)=(-1)^n\frac{2}{\pi}\Rea
 \int_0^\pi \ee^{int-\lambda\cot^2(t/2)}\,dt.
\]
With $y=\cot(t/2)$, $dt=-2\,dy/(1+y^2)$ and $\ee^{it}=(y+i)/(y-i)$, and hence
\[
 a_n(\lambda)=(-1)^n\frac{4}{\pi}\Rea
 \int_0^\infty
 \frac{\ee^{-\lambda y^2}}{1+y^2}
 \left(\frac{y+i}{y-i}\right)^n\,dy.
\]
For real $y$, the integrand at $-y$ is the complex conjugate of the integrand at $y$, which gives \eqref{eq:real-line}.  Finally, $\frac{1}{1+y^2}((y+i)/(y-i))^n=(y+i)^{n-1}/(y-i)^{n+1}$, which proves the last assertion.
\end{proof}

For a parameter sequence $\lambda=\lambda_n>0$, introduce
\begin{equation}\label{eq:alpha-delta-Lambda}
 \alpha=\frac{\lambda}{n},\qquad
 \delta=\alpha^{1/3},\qquad
 \Lambda=n\delta=(\lambda n^2)^{1/3}.
\end{equation}
After setting $w=\delta y$, \eqref{eq:real-line} becomes
\begin{equation}\label{eq:scaled-integral}
 a_n(\lambda)
 =(-1)^n\frac{2\delta}{\pi}
 \int_{-\infty}^{\infty}
 \frac{1}{w^2+\delta^2}
 \exp(-\Lambda w^2)
 \left(\frac{w+i\delta}{w-i\delta}\right)^n\,dw.
\end{equation}
To put the scaled integral into standard large-parameter form, we absorb the $n$-dependent rational factor into the exponential. Since $n=\Lambda/\delta$, locally away from $w=\pm i\delta$ we may choose a branch of the logarithm and write
\[
 \left(\frac{w+i\delta}{w-i\delta}\right)^n
 =
 \exp\!\left\{
 \frac{\Lambda}{\delta}
 \log\frac{w+i\delta}{w-i\delta}
 \right\}.
\]
Accordingly, define the phase and amplitude, for $\delta>0$, by
\begin{equation}\label{eq:scaled-phase}
 \Phi_\delta(w)
 =-w^2+\frac{1}{\delta}
 \log\frac{w+i\delta}{w-i\delta},
\end{equation}
and
\begin{equation}\label{eq:amplitude-def}
 A_\delta(w)=\frac{1}{w^2+\delta^2}.
\end{equation}
Then the scaled coefficient integral takes the standard phase--amplitude form
\begin{equation}\label{eq:scaled-phase-form}
 a_n(\lambda)
 =
 (-1)^n\frac{2\delta}{\pi}
 \int_{\mathbb R}
 A_\delta(w)e^{\Lambda\Phi_\delta(w)}\,dw.
\end{equation}
Here the logarithm may be chosen locally along the contour; the exponential is independent of the chosen branch because $\Lambda/\delta=n\in\mathbb N$, so \eqref{eq:scaled-phase-form} agrees exactly with \eqref{eq:scaled-integral}. For later use in the fixed-$\lambda$ limit, we extend both quantities continuously in $\delta$ at $\delta=0$ (for $w\ne0$) by setting
\begin{equation}\label{eq:delta-zero-phase-amplitude}
 \Phi_0(w)=-w^2+\frac{2i}{w},
 \qquad
 A_0(w)=\frac{1}{w^2}.
\end{equation}
Indeed, for fixed $w\ne0$,
\[
 \frac{1}{\delta}
 \log\frac{w+i\delta}{w-i\delta}
 =\frac{2i}{w}+O(\delta^2),
 \qquad \delta \downarrow 0.
\]
Moreover, uniformly for $w$ in compact subsets of $\mathbb C\setminus\{0\}$,
\begin{equation}\label{eq:phase-small-delta-local}
 \Phi_\delta(w)
 =\Phi_0(w)-\frac{2i\delta^2}{3w^3}+O(\delta^4),
 \qquad \delta\downarrow0.
\end{equation}
This extension is used only to make the parameter-uniform estimates transparent; no value $\lambda=0$ is inserted into the original coefficient integral.

Differentiation gives
\begin{equation}\label{eq:phase-derivatives}
 \Phi_\delta'(w)
 =-2w-\frac{2i}{w^2+\delta^2},
 \qquad
 \Phi_\delta''(w)
 =-2+\frac{4iw}{(w^2+\delta^2)^2}.
\end{equation}
Consequently, the stationary points satisfy the cubic equation
\begin{equation}\label{eq:w-cubic}
 w(w^2+\delta^2)=-i.
\end{equation}
The scaling \eqref{eq:alpha-delta-Lambda} has removed the singular motion of the fixed-$\lambda$ saddle: as $\delta\downarrow0$, the roots of \eqref{eq:w-cubic} approach the finite nonzero roots of \(w^3=-i\). In particular, the two relevant lower-half-plane saddles remain \(O(1)\) apart.

\subsection{Saddle geometry and the coalescence threshold}
\label{subsec:saddle-geometry}

We now locate the saddles and the coalescence value.  A real parametrization makes the geometry and later uniformity estimates explicit.

\begin{lemma}[Relevant saddle pair]\label{lem:saddle-pair}
Let
\begin{equation}\label{eq:alpha-critical}
 \alpha_c=\frac{3\sqrt3}{2},
 \qquad
 \delta_c=\alpha_c^{1/3}.
\end{equation}
For every $0\le\delta<\delta_c$, equation \eqref{eq:w-cubic} has exactly two simple roots in the open lower half-plane,
\begin{equation}\label{eq:wpm}
 w_\pm(\delta)=\pm u(\delta)-iv(\delta),
 \qquad u(\delta)>0,\quad v(\delta)>0,
\end{equation}
where $u$ and $v$ are determined by
\begin{equation}\label{eq:uv-relations}
 u^2=3v^2-\delta^2,
 \qquad
 2v(4v^2-\delta^2)=1.
\end{equation}
The third root is $w_0(\delta)=2iv(\delta)$.  Moreover,
\begin{equation}\label{eq:delta-zero-saddles}
 w_\pm(0)=\pm\frac{\sqrt3}{2}-\frac{i}{2},
\end{equation}
and the two lower saddles coalesce precisely at $\delta=\delta_c$.
\end{lemma}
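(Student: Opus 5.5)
The plan is to reduce \eqref{eq:w-cubic} to a cubic with real coefficients and read everything off its discriminant and Vieta's relations. Substituting $w=i\zeta$ into $w(w^2+\delta^2)=-i$ gives $i\zeta(\delta^2-\zeta^2)=-i$, that is,
\[
 g_\delta(\zeta):=\zeta^3-\delta^2\zeta-1=0 .
\]
Under this substitution the open lower half-plane in $w$ corresponds to $\Rea\zeta<0$, and the imaginary axis in $w$ corresponds to the real axis in $\zeta$. Because $g_\delta(0)=-1<0$ and $g_\delta(\zeta)\to+\infty$ as $\zeta\to+\infty$, there is always a positive real root. Writing it as $2v(\delta)$ with $v(\delta)>0$ gives the root $w_0=2iv$ in the upper half-plane.

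Next I will use the discriminant. For $\zeta^3+p\zeta+r$ with $p=-\delta^2$ and $r=-1$, the discriminant is $-4p^3-27r^2=4\delta^6-27$. It is negative exactly when $\delta^6<27/4$, i.e.\ when $\delta^3<3\sqrt3/2=\alpha_c$, which is $\delta<\delta_c$. In that range $g_\delta$ has one real root, necessarily $2v$, and a non-real conjugate pair. At $\delta=\delta_c$ the discriminant vanishes and the pair merges into a double root. This gives the coalescence claim, and it also shows that for $\delta<\delta_c$ all three roots are simple.

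To identify the pair, I use that the roots of $g_\delta$ sum to zero, so the conjugate pair has real part $-v$. Write it as $\zeta=-v\pm iu$ with $u>0$. Then $w=i\zeta=\mp u-iv$, which is the pair $w_\pm=\pm u-iv$ in the open lower half-plane, since $v>0$. The remaining Vieta relations are:
\begin{itemize}
 \item the sum of pairwise products equals $-\delta^2$, which reads $-4v^2+(u^2+v^2)=-\delta^2$, so $u^2=3v^2-\delta^2$;
 \item the product of the roots equals $1$, which reads $2v(u^2+v^2)=1$.
\end{itemize}
Substituting the first relation into the second gives $2v(4v^2-\delta^2)=1$, which is \eqref{eq:uv-relations}. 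Thus $w_\pm$ are the only roots in the lower half-plane.

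At $\delta=0$ the equation is $\zeta^3=1$, so $2v=1$ and $u^2=3/4$, which yields \eqref{eq:delta-zero-saddles}. The only step needing care is that \eqref{eq:uv-relations} pins down $v$ unambiguously. I handle this by \emph{defining} $v$ as half the unique real root of $g_\delta$, rather than by solving the cubic in $v$, which could have spurious roots. Positivity of $u^2=3v^2-\delta^2$ then follows from the pair being non-real, and $u\downarrow0$ as $\delta\uparrow\delta_c$ by continuity of the roots.
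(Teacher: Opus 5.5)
Your proof is correct, and it takes a genuinely different route from the paper.

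The paper's route is as follows. It writes $w=u-iv$ and separates real and imaginary parts of $w^3+\delta^2w=-i$ to obtain \eqref{eq:uv-relations} directly. It proves existence and uniqueness of $v>\delta/\sqrt3$ from the monotonicity of $h_\delta(v)=2v(4v^2-\delta^2)$ on $[\delta/\sqrt3,\infty)$; the threshold appears as the condition $h_\delta(\delta/\sqrt3)=2\delta^3/(3\sqrt3)<1$. It then gets the third root $2iv$ from Vieta, and handles coalescence separately by solving $p_\delta(w_c)=p_\delta'(w_c)=0$.

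You instead rotate by $\zeta=-iw$ to the real cubic $g_\delta(\zeta)=\zeta^3-\delta^2\zeta-1$. The sign of the single quantity $4\delta^6-27$ then gives at once the number of lower-half-plane roots, their simplicity, and the exact coalescence threshold. Vieta's relations yield \eqref{eq:uv-relations} with no case analysis. In particular, you never need to rule out purely imaginary lower roots, which the paper settles only through its final root count. The paper's monotone-function viewpoint, for its part, is the one reused later, for example to show that $v$ increases with $\delta$ in Proposition~\ref{prop:He-derivatives}.

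Two small tightenings. First, your concern about spurious roots is unnecessary. Since $2v(4v^2-\delta^2)-1=g_\delta(2v)$, for $\delta<\delta_c$ the second relation in \eqref{eq:uv-relations} has exactly one real solution. So the relations genuinely determine $u,v>0$, not merely by your choice of definition. Second, to see that it is the conjugate pair, and not the real root, that merges at $\delta_c$: $g_{\delta_c}'$ vanishes only at $\pm\delta_c/\sqrt3$, and $g_{\delta_c}(\delta_c/\sqrt3)=-2\neq0$. Hence the double root is $-\delta_c/\sqrt3<0$, that is, $w=-i2^{-1/3}$, in agreement with the paper.
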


\begin{proof}
Write a lower-half-plane root as $w=u-iv$ with $u,v>0$.  Separating real and imaginary parts in $w^3+\delta^2w=-i$ gives
\[
 u(u^2-3v^2+\delta^2)=0,
 \qquad
 v^3-3u^2v-\delta^2v=-1.
\]
For $u>0$ these equations reduce to \eqref{eq:uv-relations}. The relation $u^2=3v^2-\delta^2$ gives a real value of $u$ precisely when $v\ge\delta/\sqrt3$, with $u>0$ for strict inequality. On this interval, $h_\delta(v):=2v(4v^2-\delta^2)$ is strictly increasing; its minimum occurs at $v=\delta/\sqrt3$ and equals $2\delta^3/(3\sqrt3)$.
Thus there exists a unique solution with $v>\delta/\sqrt3$, and hence $u>0$, if and only if $\delta^3<3\sqrt3/2$.  Symmetry of the cubic gives the second lower root $-u-iv$.  Since the coefficient of \(w^2\) in \(w^3+\delta^2w+i=0\) vanishes, the sum of the three roots is zero by Viète's formula, and the remaining root is $2iv$.  At $\delta=0$, \eqref{eq:uv-relations} gives $v=1/2$ and $u=\sqrt3/2$.  At coalescence the cubic \eqref{eq:w-cubic}, equivalently $p_\delta(w):=w^3+\delta^2w+i=0$, has a multiple root $w_c$. Hence $p_\delta(w_c)=p_\delta'(w_c)=0$, and since $p_\delta'(w)=3w^2+\delta^2$, the coalescing root is purely imaginary; writing $w_c=-iv$ ($v>0$) gives $\delta^2=3v^2$. Substitution into $p_\delta(w_c)=0$ then yields $2v^3=1$. Therefore $v=2^{-1/3}$ and $\delta_c=\sqrt{3}\,2^{-1/3}$. Since $\alpha=\delta^3$, we obtain \(\alpha_c=\frac{3\sqrt{3}}{2}\), which is \eqref{eq:alpha-critical}.
\end{proof}

At a stationary point, \eqref{eq:w-cubic} simplifies the second derivative considerably:
\begin{equation}\label{eq:phase-second-saddle}
 \Phi_\delta''(w_\pm)
 =-6+4i\delta^2w_\pm.
\end{equation}
\begin{equation}\label{eq:real-second-derivative}
 \Rea\Phi_\delta''(w_+)
 =-6+4\delta^2v<0,
 \qquad 0\le\delta<\delta_c,
\end{equation}
and the real part tends to zero only at saddle coalescence.  Hence, for every fixed $\delta_0<\delta_c$, the two saddles remain uniformly nondegenerate for $0\le\delta\le\delta_0$.

It will be useful later to return to the unscaled variable
\begin{equation}\label{eq:zeta-transform}
 \zeta=\frac{iw}{\delta}
 \qquad (\delta>0).
\end{equation}
Equation \eqref{eq:w-cubic} becomes
\begin{equation}\label{eq:andersson-cubic}
 \alpha(\zeta^3-\zeta)+1=0,
\end{equation}
and $w_+$ corresponds to the first-quadrant root $\zeta_\alpha$.  Defining
\begin{equation}\label{eq:psi-alpha}
 \psi_\alpha(\zeta)
 =\log\frac{\zeta-1}{\zeta+1}+\alpha\zeta^2,
\end{equation}
we have the exact identity
\begin{equation}\label{eq:phase-action-identity}
 \Lambda\Phi_\delta(w_+)
 =n\psi_\alpha(\zeta_\alpha).
\end{equation}
Section~4 identifies this action with the classical concentrated-pole rate function.

\subsection{Global contour and uniform local estimates}
\label{subsec:contour}

A local expansion at the two saddles is not enough: we must know that they dominate the \emph{whole} deformed contour, uniformly as $\delta\downarrow0$. Three facts are therefore needed. The real line must be deformable through the two lower saddles without crossing a pole; those saddles must be the only global maxima of the real part of the phase; and the rest of the contour must remain below them by a parameter-uniform gap. A horizontal contour through the saddle pair reduces all three questions to one real variable.

The logarithmic phase has an apparent singularity at $w=-i\delta$, whereas the exact integrand is regular there and vanishes for $n\ge2$.  We therefore use the exact integrand to justify the deformation and use the logarithmic phase only where it is analytic.  This prevents a spurious residue at that point.

\begin{lemma}[Horizontal saddle contour]\label{lem:horizontal-contour}
Fix $0\le\delta_0<\delta_c$.  For every $0<\delta\le\delta_0$, let $w_\pm=\pm u-iv$ be given by Lemma~\ref{lem:saddle-pair}.  The contour in \eqref{eq:scaled-integral} may be deformed from $\R$ to $\R-iv$.  Along this horizontal contour define
\begin{equation}\label{eq:Gdelta}
 G_\delta(x)
 =\Rea\Phi_\delta(x-iv)
 =-x^2+v^2+\frac{1}{2\delta}
 \log\frac{x^2+(v-\delta)^2}{x^2+(v+\delta)^2}.
\end{equation}
The expression in \eqref{eq:Gdelta} is understood in the extended-real sense at its only possible zero: when $v=\delta$ (i.e., $\delta=6^{-1/3}$), set $G_\delta(0)=-\infty$. Then $G_\delta$ has exactly two global maxima, at $x=\pm u$; away from this exceptional logarithmic dip, $x=0$ is a finite local minimum. At every point where $G_\delta$ is finite,
\begin{equation}\label{eq:Gprime}
 G_\delta'(x)
 =x\left[-2+
 \frac{4v}
 {\bigl(x^2+(v-\delta)^2\bigr)
  \bigl(x^2+(v+\delta)^2\bigr)}\right],
\end{equation}
and the bracket is positive for $0<|x|<u$ and negative for $|x|>u$. The functions $G_\delta$ extend continuously to $\delta=0$. With $v(0)=1/2$,
\begin{equation}\label{eq:G0}
 G_0(x)
 =\Rea\Phi_0\!\left(x-\frac{i}{2}\right)
 =-x^2+\frac14-\frac{1}{x^2+1/4}.
\end{equation}
Furthermore, for every sufficiently small fixed $\eta>0$, there exists $c_\eta>0$, depending only on $\delta_0$ and $\eta$, such that
\begin{equation}\label{eq:uniform-gap}
 \sup_{\substack{x\in\R:\\ |x-u|\ge\eta,\ |x+u|\ge\eta}}
 G_\delta(x)
 \le G_\delta(u)-c_\eta
\end{equation}
uniformly for $0\le\delta\le\delta_0$, with $G_0$ understood as in \eqref{eq:G0}.
\end{lemma}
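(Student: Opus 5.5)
The plan has four parts: justify the contour shift with the exact integrand, compute $G_\delta$ and the sign of $G_\delta'$ directly, pass to $\delta=0$ by continuity, and obtain the uniform gap by a compactness argument.

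\textbf{Contour deformation.} We work with the exact integrand of \eqref{eq:scaled-integral}, which equals
\[
 \delta^{-1}(w+i\delta)^{n-1}(w-i\delta)^{-n-1}e^{-\Lambda w^2}
\]
up to the constant in front. Its only pole is at $w=i\delta$, in the upper half-plane. The strip $-v\le\Ima w\le0$ therefore contains no singularity; the point $-i\delta$ is regular there. On the vertical sides $\Rea w=\pm R$, $-v\le\Ima w\le 0$, we have $|e^{-\Lambda w^2}|=e^{-\Lambda(R^2-(\Ima w)^2)}$, while the rational factor stays bounded. The side integrals therefore vanish as $R\to\infty$, and Cauchy's theorem moves $\R$ to $\R-iv$.

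\textbf{Formula for $G_\delta$ and its derivative.} On the shifted contour, $|e^{\Lambda\Phi_\delta}|=e^{\Lambda\Rea\Phi_\delta}$ is computed from the modulus of the exact integrand, so no branch choice is involved. Put $w=x-iv$. Then $\Rea(-w^2)=-x^2+v^2$ and $|w\pm i\delta|^2=x^2+(v\mp\delta)^2$, which gives \eqref{eq:Gdelta}. The case $v=\delta$ at $x=0$ gives $-\infty$, as stated. Differentiating produces
\[
 \frac{x}{\delta}\Bigl[\frac{1}{x^2+(v-\delta)^2}-\frac{1}{x^2+(v+\delta)^2}\Bigr]
 =\frac{x\cdot4v}{\bigl(x^2+(v-\delta)^2\bigr)\bigl(x^2+(v+\delta)^2\bigr)},
\]
which is \eqref{eq:Gprime}.

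\textbf{Sign of the bracket.} The denominator $D(s)=\bigl(s+(v-\delta)^2\bigr)\bigl(s+(v+\delta)^2\bigr)$, with $s=x^2$, is strictly increasing in $s\ge0$. Hence the bracket $-2+4v/D(s)$ is strictly decreasing and has at most one zero for $s>0$. Because $w_+$ is a saddle, $\Phi_\delta'(w_+)=0$, so $G_\delta'(u)=\Rea\Phi_\delta'(u-iv)=0$, and the zero sits at $s=u^2$. One can also check directly that $D(u^2)=2v$ using \eqref{eq:uv-relations}:
\[
 u^2+(v\mp\delta)^2=4v^2\mp2v\delta, \qquad (4v^2)^2-4v^2\delta^2=4v^2(4v^2-\delta^2)=2v.
\]
Thus the bracket is positive for $0<|x|<u$ and negative for $|x|>u$. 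It follows that $G_\delta$ increases on $(0,u)$, decreases on $(u,\infty)$, and is even. This yields the two global maxima at $\pm u$ and the local minimum at $0$.

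\textbf{The limit $\delta\to0$.} The identity $\tfrac{1}{2\delta}\log\frac{x^2+(v-\delta)^2}{x^2+(v+\delta)^2}\to-\frac{2v}{x^2+v^2}$ holds with $v\to1/2$, locally uniformly in $x$. This gives \eqref{eq:G0}, and the sign analysis passes to the limit in the same way.

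\textbf{Uniform gap (the main obstacle).} I would argue by compactness on $(\delta,x)\in[0,\delta_0]\times\R$. First, $G_\delta(x)\le -x^2+v^2$, because the logarithm is $\le0$. Moreover $v(\delta)$ and $G_\delta(u)$ are continuous and bounded on $[0,\delta_0]$, since $v$ is the simple root of a strictly increasing $h_\delta$. So outside some fixed $|x|\le R$ the gap holds trivially.

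On the compact region, monotonicity gives
\[
 \sup G_\delta \text{ over the excluded set } \le \max\bigl(G_\delta(u\pm\eta),\,G_\delta(u-\eta)\bigr) \text{ terms},
\]
that is, the supremum is attained at $x=\pm(u\pm\eta)$, provided $\eta<u_{\min}$. Here $u_{\min}=\min_{[0,\delta_0]}u>0$ by continuity and $\delta_0<\delta_c$. The point $x=0$ gives a smaller value because $G_\delta$ is unimodal on $[0,\infty)$.

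The function $\delta\mapsto G_\delta(u)-G_\delta(u\pm\eta)$ is strictly positive for each $\delta$ by strict monotonicity. It is also continuous in $\delta$ on $[0,\delta_0]$, including $\delta=0$ and the exceptional $\delta=6^{-1/3}$, because $u\pm\eta\neq0$ keeps us away from the logarithmic dip. Its minimum over the compact interval therefore gives $c_\eta>0$.

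The delicate point is continuity of $G_\delta(x)$ jointly at $\delta=0$, uniformly for $x$ bounded away from $0$. I would handle this with the expansion $\frac1\delta\operatorname{artanh}$-type bound $\bigl|\tfrac{1}{2\delta}\log\tfrac{1-a}{1+a}+\tfrac{a}{\delta}\bigr|=O(a^3/\delta)$, where $a=2v\delta/(x^2+v^2+\delta^2)$.
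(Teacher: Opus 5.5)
Your proposal is correct and follows the paper's appendix proof step for step: the rectangle shift using the exact integrand, the modulus computation for $G_\delta$, the identity $D(u^2)=2v$ together with strict monotonicity of the denominator in $x^2$, and the $\delta\to0$ limit. The one difference is in the uniform gap. The paper combines a quadratic drop from $\Rea\Phi_\delta''(w_+)<0$, compactness on bounded $x$, and a separate tail bound. You instead use the unimodality you have already proved to reduce the supremum to the points $x=\pm(u\pm\eta)$, and then minimize a continuous positive function of $\delta$ over $[0,\delta_0]$. This is a slightly tighter version of the same compactness argument, and it makes the tail estimate unnecessary.
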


The lemma separates the global issue cleanly. Formula \eqref{eq:Gprime} identifies the two saddle points as the only maxima; compactness on a subcritical parameter interval controls the bounded part of the contour, and the Gaussian factor controls the tails. The one subtlety is the apparent singularity at $-i\delta$: the deformation must be performed with the exact rational integrand, for which this point is regular, rather than with the logarithmic phase alone. The details are in Appendix~\ref{app:saddle-details}.

\begin{remark}[The apparent singularity at $-i\delta$]\label{rem:removable-zero}
When $v>\delta$, the contour deformation passes below the point $w=-i\delta$, and for the isolated value $v=\delta$ the final horizontal line passes through it.  This causes no singular contribution: the exact integrand has the local factor $(w+i\delta)^{n-1}$ there.  In the proof below, the logarithmic phase is used only near $w_\pm$, which stay uniformly separated from $-i\delta$ on every compact subcritical interval.
\end{remark}

\paragraph{Uniform local estimate.}\label{subsec:local-expansion}
Once the global contour is fixed, the remaining calculation is local. The delicate point is uniformity: a fixed-$\delta$ saddle expansion would not by itself justify passage to the fixed-pole limit $\delta\downarrow0$. We therefore keep one neighborhood of the saddle curve on which the phase and amplitude are uniformly analytic and the quadratic curvature is bounded away from zero.

\begin{lemma}[Uniform local saddle estimate]\label{lem:uniform-local-saddle}
Fix $0\le\delta_0<\delta_c$. There exists $\eta>0$, depending only on $\delta_0$, such that the following property holds for $0\le\delta\le\delta_0$. Let $w_+(\delta)=u(\delta)-iv(\delta)$ be the right lower saddle, and fix an even cutoff function $\chi\in C_c^\infty(\R)$ satisfying $0\le\chi\le1$, $\chi(s)=1$ for $|s|\le\eta$, and $\chi(s)=0$ for $|s|\ge2\eta$. With the extensions \eqref{eq:delta-zero-phase-amplitude} at $\delta=0$, define
\begin{equation}\label{eq:Jplus-local}
 J_+(\Lambda,\delta)
 =\int_\R \chi(x-u)
 A_\delta(x-iv)
 \ee^{\Lambda\Phi_\delta(x-iv)}\,dx.
\end{equation}
Then, as $\Lambda\to\infty$,
\begin{equation}\label{eq:uniform-local-asymptotic}
 J_+(\Lambda,\delta)
 =A_\delta(w_+)
 \ee^{\Lambda\Phi_\delta(w_+)}
 \sqrt{\frac{2\pi}{-\Lambda\Phi_\delta''(w_+)}}
 \left(1+R(\Lambda,\delta)\right),
\end{equation}
where the relative remainder $R(\Lambda,\delta)$ satisfies
\begin{equation}\label{eq:uniform-local-remainder}
 |R(\Lambda,\delta)|\le \frac{C}{\Lambda}
\end{equation}
uniformly for $0\le\delta\le\delta_0$, with $C$ independent of $\Lambda$ and $\delta$. The square root is the branch continuous in $\delta$ and positive at $\delta=0$, where $\Phi_0''(w_+(0))=-6$.
\end{lemma}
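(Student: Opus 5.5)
The plan is to reduce the statement to a standard Laplace-type estimate with parameters, in which every constant is controlled by a compactness argument over the closed interval $0\le\delta\le\delta_0$. This interval is compact, and all the data depend continuously, indeed real-analytically, on $\delta$ there. In particular $u,v$ are continuous, by the implicit function theorem applied to \eqref{eq:uv-relations} using $h_\delta'(v)>0$. The phase $\Phi_\delta$ and amplitude $A_\delta$ extend continuously to $\delta=0$ by \eqref{eq:delta-zero-phase-amplitude} and \eqref{eq:phase-small-delta-local}.

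\textbf{Step 1: a uniform complex neighbourhood.} I would first choose $\eta>0$ and $\rho>0$ so that, for all $0\le\delta\le\delta_0$, the disc $D_\delta=\{|w-w_+(\delta)|\le 3\eta\}$ stays a fixed distance away from $\pm i\delta$, from $0$, and from the other saddles $w_-$ and $w_0$. This is possible because $u(\delta)\ge u_0>0$ and $v$ is bounded away from $\delta$ near $w_+$. On such a disc a single branch of $\log\frac{w+i\delta}{w-i\delta}$ is analytic, and $\Phi_\delta$ and $A_\delta$ are analytic with derivatives bounded uniformly in $\delta$. The limit $\delta\downarrow0$ is handled by the expansion $(1/\delta)\log(\cdot)=2i/w+O(\delta^2)$, which holds uniformly on compact sets avoiding $0$. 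In addition, $|A_\delta(w_+)|$ is bounded below, and by \eqref{eq:real-second-derivative} and continuity $\Rea\Phi''_\delta\le -c_0<0$ on $D_\delta$ once $\eta$ is small.

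\textbf{Step 2: Morse change of variables.} On the segment $x\in[u-2\eta,u+2\eta]$ I would write $w=w_++s$ and
\[
\Phi_\delta(w_++s)-\Phi_\delta(w_+)=\tfrac12\Phi''_\delta(w_+)\,s^2\,(1+s\,g_\delta(s)),
\]
with $g_\delta$ analytic and uniformly bounded. I would then introduce the holomorphic variable $\tau=s\sqrt{1+sg_\delta(s)}$ and deform the segment into a steepest-descent path inside $D_\delta$, keeping the endpoints fixed. The alternative is to avoid contour deformation altogether: stay on the real segment and expand $A_\delta e^{\Lambda(\Phi_\delta-\Phi_\delta(w_+)-\frac12\Phi''s^2)}$ to second order. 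The cubic term $\Lambda s^3$ is odd, so it integrates to zero against the Gaussian, and the remaining error is $O(\Lambda^{-1})$ relative to the leading term. Here the Gaussian weight is $e^{\frac12\Lambda\Phi''s^2}$ with $\Rea\Phi''\le-c_0$, so the standard moment bounds $\int |s|^k e^{-c_0\Lambda s^2/2}\,ds\lesssim\Lambda^{-(k+1)/2}$ hold uniformly in $\delta$. To make the Taylor remainder legitimate, $|\Lambda s^3|$ must be controlled; I would split at $|s|\le\Lambda^{-1/3}$. On $|s|\le\Lambda^{-1/3}$ we use $|e^{z}-1-z-z^2/2|\le|z|^3e^{|z|}$ with $z=O(\Lambda s^3)$ and absorb $e^{|z|}$ using $\Rea\Phi''\le -c_0$ on a smaller $\eta$. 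On the complement, $\Rea(\Phi_\delta-\Phi_\delta(w_+))\le -c_0s^2/4$, so that region is $O(e^{-c\Lambda^{1/3}})$, which is again uniform.

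\textbf{Step 3: the cutoff and the constants.} The cutoff $\chi(x-u)$ equals $1$ for $|s|\le\eta$. Where $\chi$ differs from $1$, the phase satisfies $\Rea\Phi_\delta-\Rea\Phi_\delta(w_+)\le -c_0\eta^2/4$ by Step 1; this is consistent with \eqref{eq:uniform-gap}. That region therefore contributes a relative $O(e^{-c\Lambda})$. Next, the Gaussian integral $\int e^{\frac12\Lambda\Phi''s^2}ds=\sqrt{2\pi/(-\Lambda\Phi'')}$ holds with the principal branch, because $\Rea(-\Phi'')>0$. This branch is continuous in $\delta$ and equals $\sqrt{2\pi/(6\Lambda)}$ at $\delta=0$, matching the statement. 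All $O(\cdot)$ constants depend only on the uniform bounds of Step 1, which gives $|R|\le C/\Lambda$.

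I expect Step 1 to be the main obstacle: securing a $\delta$-independent neighbourhood and curvature bound that survives $\delta\downarrow0$, where $\Phi_\delta$ is defined through $(1/\delta)\log$. I would handle it by showing that $(\delta,w)\mapsto\Phi_\delta(w)$ is jointly continuous and analytic in $w$ on $[0,\delta_0]\times\{|w-w_+(\delta)|\le3\eta\}$. For this I would use the series $\frac1\delta\log\frac{1+i\delta/w}{1-i\delta/w}=\frac{2i}{w}\sum_k \frac{(i\delta/w)^{2k}(-1)^k}{2k+1}$, valid when $|\delta/w|<1$. For larger $\delta$ I would use instead the direct analyticity of $\Phi_\delta$ away from $\pm i\delta$, combined with compactness.
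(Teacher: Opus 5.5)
Your proposal is correct and follows essentially the same route as the paper's proof in Appendix~\ref{app:saddle-details}. The shared steps are: one $\delta$-independent analytic neighbourhood of the compact saddle curve; the uniform quadratic bound $\Rea\{\Phi_\delta(w_++s)-\Phi_\delta(w_+)\}\le -c\,s^2$; a Taylor expansion on a shrinking symmetric window in which the odd $\Lambda^{-1/2}$ terms integrate to zero (you cut at $|s|\le\Lambda^{-1/3}$, the paper at $|\xi|\le\Lambda^{1/12}$ with $s=\Lambda^{-1/2}\xi$); exponentially small tails; and $A_\delta(w_+)=iw_+\neq0$ to turn the absolute error into a relative $O(\Lambda^{-1})$ bound.

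Two small corrections, neither of which affects the argument. First, $v$ is \emph{not} bounded away from $\delta$, since they coincide at $\delta=6^{-1/3}$. You do not need that claim, because $|w_+\pm i\delta|\ge u(\delta)\ge u_0>0$ and $|w_+|\ge u_0$ already keep the disc away from $\pm i\delta$ and $0$. Second, your displayed series has an extra factor $(-1)^k$. The correct expansion is $\frac{2i}{w}\sum_k(-1)^k(\delta/w)^{2k}/(2k+1)$.
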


The mechanism is the standard Gaussian saddle expansion, but the conclusion is stronger than a pointwise-in-$\delta$ estimate. The compact saddle curve stays uniformly separated from the singular points and has uniformly nonzero curvature, so one neighborhood works for every $0\le\delta\le\delta_0$. After Gaussian rescaling, the odd $\Lambda^{-1/2}$ correction integrates to zero against the even cutoff, leaving the uniform relative $O(\Lambda^{-1})$ remainder. Appendix~\ref{app:saddle-details} gives the details.

\subsection{Uniform two-saddle asymptotics}
\label{subsec:uniform-two-saddle}

We can now combine the global contour argument with the local expansion. The two Gaussian neighborhoods produce the conjugate leading terms. Everywhere else the uniform action gap gives exponential decay, except near the apparent singularity, where we return to the exact rational integrand. Only the two saddle neighborhoods contribute at algebraic order; the other pieces are exponentially smaller.

The resulting theorem is uniform all the way down to the fixed-pole limit, but it stops a positive distance before saddle coalescence.  At coalescence the Gaussian saddle model itself changes to an Airy-type one; that is a separate problem and is not needed for either the fixed-pole regime or the classical optimal moving pole.

\begin{theorem}[Uniform two-saddle asymptotics]\label{thm:uniform-coeff}
Fix $0<\alpha_0<\alpha_c=\frac{3\sqrt3}{2}$, and let $\lambda_n>0$ satisfy
\begin{equation}\label{eq:uniform-range}
 0<\alpha_n:=\frac{\lambda_n}{n}\le\alpha_0,
 \qquad
 \Lambda_n:=(\lambda_n n^2)^{1/3}\longrightarrow\infty.
\end{equation}
Set $\delta_n=\alpha_n^{1/3}$, and let $w_+(\delta_n)=u_n-iv_n$ be the right lower saddle in Lemma~\ref{lem:saddle-pair}. Then the Chebyshev coefficients satisfy
\begin{align}
 a_n(\lambda_n)
 ={}&(-1)^n\frac{4\delta_n}{\pi}
 \Rea\Bigg\{
 \frac{\ee^{\Lambda_n\Phi_{\delta_n}(w_+)}}
 {w_+^2+\delta_n^2}
 \sqrt{\frac{2\pi}
 {-\Lambda_n\Phi_{\delta_n}''(w_+)}}
 \Bigg\}
 \label{eq:uniform-leading}\\
 &\quad+O\!\left(
 \delta_n\Lambda_n^{-3/2}
 \ee^{\Lambda_n\Rea\Phi_{\delta_n}(w_+)}
 \right),\nonumber
\end{align}
where the square root is continued from its positive value at $\delta=0$. The implied constant depends only on $\alpha_0$. Equivalently, if $\mathcal C_n$ is the localized complex contribution from the right saddle before taking the real part, then
\begin{equation}\label{eq:complex-relative-error}
 \mathcal C_n
 =\frac{\ee^{\Lambda_n\Phi_{\delta_n}(w_+)}}
 {w_+^2+\delta_n^2}
 \sqrt{\frac{2\pi}
 {-\Lambda_n\Phi_{\delta_n}''(w_+)}}
 \left(1+O(\Lambda_n^{-1})\right)
\end{equation}
uniformly in \eqref{eq:uniform-range}.  We make no relative-error claim for the real coefficient $a_n$, whose conjugate saddle contributions may nearly cancel along subsequences.
\end{theorem}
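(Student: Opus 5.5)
We start from the exact representation \eqref{eq:scaled-phase-form} and apply Lemma~\ref{lem:horizontal-contour} with $\delta=\delta_n$. This moves the contour to the horizontal line $\R-iv_n$. The deformation uses the exact rational integrand \eqref{eq:scaled-integral}, which is analytic below $w=i\delta$. The Gaussian factor $\ee^{-\Lambda w^2}$ kills the vertical connecting segments at $\pm R$ as $R\to\infty$, because $(w+i\delta)^{n-1}/(w-i\delta)^{n+1}$ is bounded there.

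On the line we write $1=\chi(x-u)+\chi(x+u)+\bigl(1-\chi(x-u)-\chi(x+u)\bigr)$, with $\eta$ taken from Lemma~\ref{lem:uniform-local-saddle}. Shrinking $\eta$ if needed, uniformly in $\delta\le\delta_0$, makes the two cutoffs disjoint, since $u(\delta)$ is continuous and positive on $[0,\delta_0]$. This gives $a_n=(-1)^n\frac{2\delta}{\pi}\bigl(J_++J_-+J_{\rm rem}\bigr)$.

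For real $x$, the integrand on $\R-iv$ satisfies the reflection symmetry $A_\delta(-x-iv)\ee^{\Lambda\Phi_\delta(-x-iv)}=\overline{A_\delta(x-iv)\ee^{\Lambda\Phi_\delta(x-iv)}}$. This follows by conjugating \eqref{eq:scaled-integral}: $\overline{-\bar w}=-w$, and the rational factor maps to its conjugate. Since $\chi$ is even, $J_-=\overline{J_+}$, so $J_++J_-=2\Rea J_+$. Lemma~\ref{lem:uniform-local-saddle} then gives $J_+=\mathcal C_n$ with exactly the relative $O(\Lambda_n^{-1})$ error in \eqref{eq:complex-relative-error}. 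In absolute terms this error is $O\bigl(\Lambda^{-3/2}\ee^{\Lambda\Rea\Phi(w_+)}\bigr)$, because $|A_\delta(w_+)|$ and $|\Phi''_\delta(w_+)|^{-1/2}$ are bounded above and below uniformly by \eqref{eq:real-second-derivative} and the compactness of the saddle curve. Multiplying by $2\delta/\pi$ produces \eqref{eq:uniform-leading}.

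The remaining task, and the expected main obstacle, is to show that $\delta|J_{\rm rem}|=O\bigl(\delta\Lambda^{-3/2}\ee^{\Lambda G_\delta(u)}\bigr)$ uniformly. The modulus of the exact integrand on the line is $|A_\delta|\ee^{\Lambda G_\delta(x)}$. By \eqref{eq:uniform-gap}, $\Lambda G_\delta\le\Lambda G_\delta(u)-\Lambda c_\eta$ off the saddle neighborhoods. Two issues must be handled uniformly.

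The first is the amplitude near $x=0$ when $v$ is close to $\delta$, where $A_\delta$ may blow up. Here we write $|A_\delta|\ee^{\Lambda G_\delta}=|w+i\delta|^{n-1}/|w-i\delta|^{n+1}$ times the Gaussian. For $n\ge2$ this is bounded; the case $n=1$ is irrelevant because $\Lambda\to\infty$ forces $n\to\infty$. We then split off the factor $\ee^{(\Lambda/2)G_\delta}$ and bound the rest by $\ee^{(\Lambda/2)(G_\delta(u)-c_\eta)}$. The cleanest route is to observe that $|A_\delta|\ee^{G_\delta\cdot 1/\delta}$-type factors with $n\ge2$ are integrable and bounded, using $|A_\delta|\le \max(1, (v-\delta)^{-2})$ only away from the point $x=0$.

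The second issue is the tails, where $G_\delta(x)\le -x^2/2+C$ for large $|x|$ gives integrability. For $\Lambda\ge1$ the remainder therefore contributes $O\bigl(\ee^{\Lambda G_\delta(u)-\Lambda c_\eta/2}\bigr)$, which is $o(\Lambda^{-3/2})$ relative to $\ee^{\Lambda G_\delta(u)}$. Finally, $\Rea\Phi_\delta(w_+)=G_\delta(u)$, which completes the bound.
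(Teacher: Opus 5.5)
Your outline is the paper's proof up to the last step: deformation using the exact rational integrand, two even cutoffs with $J_-=\overline{J_+}$, Lemma~\ref{lem:uniform-local-saddle} turned into an absolute error $O(\Lambda^{-3/2}\ee^{\Lambda G_\delta(u)})$, and the uniform gap \eqref{eq:uniform-gap} plus Gaussian tails for the rest. Those parts are fine. The gap is at the step you flag yourself: the neighborhood of the regular point $w=-i\delta$ when $v(\delta)$ is near $\delta$ (i.e.\ $\delta$ near $6^{-1/3}$), where $A_\delta$ is unbounded on the contour. Write $\varrho(x)=|w+i\delta|/|w-i\delta|$ for $w=x-iv$. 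The modulus of the exact integrand is then
\[
 \ee^{\Lambda(v^2-x^2)}\,\varrho(x)^{n-1}\,|w-i\delta|^{-2}.
\]
Saying this is ``bounded for $n\ge2$'' only removes the algebraic singularity. Discarding $\varrho^{n-1}\le1$ leaves a factor $\ee^{\Lambda v^2}$, but the target is $\ee^{\Lambda G_\delta(u)}$. Since $v^2-G_\delta(u)>u^2>0$, that bound is exponentially too large. The split $\ee^{(\Lambda/2)G_\delta}\cdot|A_\delta|\ee^{(\Lambda/2)G_\delta}$ does not rescue it: bounding the second factor by $C\ee^{(\Lambda/2)G_\delta(u)}$ near $x=0$ is the same exponential comparison. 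Your remark about ``$|A_\delta|\ee^{G_\delta/\delta}$-type factors'' being bounded, with $|A_\delta|\le\max(1,(v-\delta)^{-2})$ used only away from $x=0$, says nothing at the one point where an estimate is needed.

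The missing idea is to use the vanishing factor quantitatively. Since $n=\Lambda/\delta\ge\Lambda/\delta_0$, the factor $\varrho^{n-1}$ is exponentially small in $\Lambda$, at a rate that grows without bound as the neighborhood shrinks. For instance, choose $r_0\in(0,1)$ with $\frac{1}{2\delta_0}\log r_0\le\inf_{0<\delta\le\delta_0}\bigl(G_\delta(u)-v^2\bigr)-1$; the infimum is finite by continuity down to $\delta=0$. Where $\varrho\le r_0$, use $n-1\ge n/2$ and $|w-i\delta|\ge v\ge v(0)=\tfrac12$; the modulus is then at most $4\ee^{\Lambda(G_\delta(u)-1)}\ee^{-\Lambda x^2}$. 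Where $\varrho>r_0$, one has $|w+i\delta|\ge r_0|w-i\delta|$, hence $|A_\delta|\le4/r_0$, and the uniform gap applies. The paper does the same thing, localized to a parameter neighborhood $U$ of $\delta_*=6^{-1/3}$ and a window $|x|\le\eta_0$. Both are shrunk until $r_0$ is small enough that $C\ee^{\Lambda v^2}r_0^{n-1}\le C'\ee^{\Lambda[\Rea\Phi_\delta(w_+)-c]}$. With this estimate inserted, your argument goes through.
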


\begin{proof}
Deform \eqref{eq:scaled-integral} to the horizontal line $\R-iv_n$ by Lemma~\ref{lem:horizontal-contour}.  Choose the cutoff $\chi$ from Lemma~\ref{lem:uniform-local-saddle} around $x=u_n$ and its reflected copy around $x=-u_n$.  The two localized integrals are complex conjugates by the symmetry of the exact integrand.  Lemma~\ref{lem:uniform-local-saddle} therefore gives their sum as twice the real part of the leading contribution in \eqref{eq:uniform-leading}, with a total local error of order
\[
 \Lambda_n^{-3/2}
 \ee^{\Lambda_n\Rea\Phi_{\delta_n}(w_+)}.
\]
After multiplication by the prefactor $2\delta_n/\pi$ in \eqref{eq:scaled-integral}, this is the remainder stated in the theorem.

The complement of the two saddle neighborhoods is exponentially smaller by \eqref{eq:uniform-gap}.  Away from a fixed small neighborhood of $w=-i\delta_n$, the amplitude $A_{\delta_n}$ is uniformly bounded, and hence for some $c>0$ independent of $n$ that part of the remainder is bounded by
\[
 C\ee^{\Lambda_n\Rea\Phi_{\delta_n}(w_+)-c\Lambda_n}.
\]
There is one point at which this amplitude-phase factorization itself needs care.  The final horizontal contour passes through $w=-i\delta$ only when $v(\delta)=\delta$, which by \eqref{eq:uv-relations} occurs at the single value $\delta_*=6^{-1/3}$.  If $\delta_*$ does not belong to $[0,\alpha_0^{1/3}]$, no additional argument is needed.  Otherwise choose a small parameter neighborhood $U$ of $\delta_*$ and a fixed interval $|x|\le\eta_0$ about this regular point, disjoint from the two saddle neighborhoods.  For $\delta\notin U$ the amplitude is again uniformly bounded there.  For $\delta\in U$ we use the exact integrand on $w=x-iv$,
\begin{equation}\label{eq:exact-removable-integrand-bound}
 \ee^{-\Lambda w^2}
 \frac{(w+i\delta)^{n-1}}{(w-i\delta)^{n+1}},
 \qquad n=\frac{\Lambda}{\delta}.
\end{equation}
After shrinking $U$ and $\eta_0$ if necessary, $\delta$ is bounded away from zero, $|w-i\delta|$ is bounded below, and
\[
 \left|\frac{w+i\delta}{w-i\delta}\right|\le r_0<1
 \qquad (\delta\in U,\ |x|\le\eta_0).
\]
As $U$ and $\eta_0$ shrink to $(\delta_*,0)$, the admissible value of $r_0$ tends to zero.  On the other hand, the exact modulus in \eqref{eq:exact-removable-integrand-bound} satisfies, for all sufficiently large $n$,
\[
 \left|
 \ee^{-\Lambda w^2}
 \frac{(w+i\delta)^{n-1}}{(w-i\delta)^{n+1}}
 \right|
 \le C\ee^{\Lambda v(\delta)^2}r_0^{n-1}.
\]
The quantity $v(\delta)^2-\Rea\Phi_\delta(w_+(\delta))$ is bounded on $U$, and $n/\Lambda=1/\delta$ is bounded above and below there.  We may therefore shrink $U$ and $\eta_0$ so that, for a parameter-independent $c>0$,
\[
 C\ee^{\Lambda v(\delta)^2}r_0^{n-1}
 \le C'\ee^{\Lambda[\Rea\Phi_\delta(w_+)-c]}.
\]
Thus the neighborhood of this point is exponentially small as well.  The complete contour remainder is therefore absorbed into the algebraic local remainder, proving \eqref{eq:uniform-leading}.  The complex relative estimate \eqref{eq:complex-relative-error} is exactly \eqref{eq:uniform-local-asymptotic}--\eqref{eq:uniform-local-remainder}.
\end{proof}

The effective large parameter is $\Lambda=(\lambda n^2)^{1/3}$: it has order $n^{2/3}$ for fixed $\lambda$ and order $n$ when \(\lambda\asymp n\). As $\delta\downarrow0$, Lemma~\ref{lem:saddle-pair} and \eqref{eq:delta-zero-phase-amplitude} give
\[
 w_+(\delta)\to w_*:=\frac{\sqrt3}{2}-\frac{i}{2},
 \qquad \Phi_0(w_*)=-\frac32+\frac{3\sqrt3}{2}i.
\]
Since the saddle and phase are smooth in $\delta^2$ and $\Lambda\delta^2=\lambda$, the fixed-$\lambda$ real action is $-\frac32\lambda^{1/3}n^{2/3}+O(1)$, with prefactor of order $\delta\Lambda^{-1/2}=\lambda^{1/6}n^{-2/3}$. Section~\ref{subsec:small-alpha} derives the full fixed-pole formula. The coalescence point itself remains excluded, as noted above.

\section{Pole-scaling transition and the concentrated-pole rate function}
\label{sec:transition}

The saddle theorem was stated in variables chosen to make uniformity transparent. We now return to the variables natural for repeated-pole approximation. The resulting rate function contains the fixed, sublinear, and linear pole regimes in a single formula. For the moment this formula describes the coefficient \emph{envelope}; Section~\ref{sec:error} supplies the noncancellation argument needed to turn that envelope into an approximation-error scale.

\subsection{The saddle action and the concentrated-pole rate function}
\label{subsec:rate-function}

For $0<\alpha<\alpha_c$, let $\zeta_\alpha$ be the first-quadrant root of
\begin{equation}\label{eq:transition-cubic}
 \alpha(\zeta^3-\zeta)+1=0,
\end{equation}
selected in Section~\ref{sec:uniform-saddle}, and let
\begin{equation}\label{eq:transition-psi}
 \psi_\alpha(\zeta)
 =\log\frac{\zeta-1}{\zeta+1}+\alpha\zeta^2.
\end{equation}
The branch of the logarithm is the one inherited continuously from the saddle construction in Section~\ref{sec:uniform-saddle}.  Its real part is branch-independent.  Define
\begin{equation}\label{eq:He-Theta-def}
 H_e(\alpha)
 :=\exp\!\left(\Rea\psi_\alpha(\zeta_\alpha)\right),
 \qquad
 \Theta(\alpha)
 :=\Ima\psi_\alpha(\zeta_\alpha).
\end{equation}
Thus
\begin{equation}\label{eq:He-explicit}
 H_e(\alpha)
 =\exp\!\left(
 \log\left|\frac{\zeta_\alpha-1}{\zeta_\alpha+1}\right|
 +\alpha\Rea(\zeta_\alpha^2)
 \right).
\end{equation}
Formula \eqref{eq:He-explicit}, together with \eqref{eq:transition-cubic}, is exactly Andersson's fixed-ratio rate $\exp G^*(\alpha)$ after identifying his parameter $q$ with $\alpha$; his critical equation $1+\alpha(\zeta^3-\zeta)=0$ is \eqref{eq:transition-cubic} \cite[Secs.~2--4]{Andersson1981}. The same cubic and real action also underlie recent matrix-exponential work \cite{GuettelShao2025,GuettelShao2026}. On the subcritical interval, $\zeta_\alpha$ is the unique root in the open first quadrant, so the root selection agrees unambiguously with the classical one.  In the usual positive-half-line formulation for $\ee^{-tz}$ the corresponding concentrated pole is $-n\alpha/t$; under the $(-\infty,0]$ convention for $\ee^{tz}$ it is $+n\alpha/t$.

The leading coefficient formula from Theorem~\ref{thm:uniform-coeff} takes an especially transparent form in these variables.  Let the square root in
\begin{equation}\label{eq:C-alpha-def}
 C(\alpha)
 :=\frac{4\sqrt\alpha}{\sqrt\pi}\,
 \frac{\zeta_\alpha}{\sqrt{3-2\alpha\zeta_\alpha}}
 =:B(\alpha)\ee^{i\beta(\alpha)}
\end{equation}
be chosen continuously from the positive square root of $3$ as $\alpha\downarrow0$.  Then $B(\alpha)>0$ and $\beta(\alpha)$ is continuous on every compact subinterval of $(0,\alpha_c)$.

\begin{proposition}[Uniform oscillatory coefficient asymptotics]\label{prop:oscillatory-master}
Under the hypotheses of Theorem~\ref{thm:uniform-coeff}, with $\alpha_n=\lambda_n/n$ and $\Lambda_n=(\lambda_n n^2)^{1/3}$,
\begin{equation}\label{eq:oscillatory-master}
 a_n(\lambda_n)
 =(-1)^n n^{-1/2}B(\alpha_n)H_e(\alpha_n)^n
 \left[
 \cos\!\left(n\Theta(\alpha_n)+\beta(\alpha_n)\right)
 +O(\Lambda_n^{-1})
 \right],
\end{equation}
uniformly for $0<\alpha_n\le\alpha_0<\alpha_c$.  The error in brackets is additive; no lower bound on the cosine is assumed.
\end{proposition}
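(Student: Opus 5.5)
This is a direct translation of Theorem~\ref{thm:uniform-coeff} into the variables $\zeta,\alpha$. No new analysis is needed, only algebraic identification of each factor in \eqref{eq:uniform-leading}.

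\emph{Exponential.} By the exact identity \eqref{eq:phase-action-identity},
\[
\ee^{\Lambda_n\Phi_{\delta_n}(w_+)}=\ee^{n\psi_{\alpha_n}(\zeta_{\alpha_n})}=H_e(\alpha_n)^n\ee^{in\Theta(\alpha_n)}.
\]
The branch is the one inherited continuously from the saddle construction, and the exponential is branch-independent since $n\in\mathbb N$.

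\emph{Algebraic prefactor.} Next I rewrite the prefactor $\frac{4\delta}{\pi}(w_+^2+\delta^2)^{-1}\sqrt{2\pi/(-\Lambda\Phi''(w_+))}$ in $\zeta$-variables. The key inputs are:
\begin{itemize}
\item $w_+=-i\delta\zeta_\alpha$ from \eqref{eq:zeta-transform}, which gives $w_+^2+\delta^2=\delta^2(1-\zeta^2)$.
\item The cubic \eqref{eq:andersson-cubic}, which gives $1-\zeta^2=1/(\alpha\zeta)$, so $(w_+^2+\delta^2)^{-1}=\alpha\zeta/\delta^2=\delta\zeta$.
\item Equation \eqref{eq:phase-second-saddle}, which gives $-\Phi''(w_+)=6-4i\delta^2w_+=6-4\delta^3\zeta=2(3-2\alpha\zeta)$.
\end{itemize}
Hence
\[
\frac{4\delta}{\pi}\cdot\delta\zeta\cdot\sqrt{\frac{\pi}{\Lambda(3-2\alpha\zeta)}}=\frac{4\delta^2}{\sqrt{\pi\Lambda}}\,\frac{\zeta}{\sqrt{3-2\alpha\zeta}}.
\]
Using $\Lambda=n\delta$ and $\delta^2/\sqrt{\delta}=\delta^{3/2}=\sqrt\alpha$, this equals $n^{-1/2}C(\alpha_n)$, with $C$ as in \eqref{eq:C-alpha-def}.

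\emph{Branch of the square root.} The branch needs one check. At $\delta\to0$, $\alpha\zeta\to0$ because $\zeta\sim\alpha^{-1/3}w_*i$, so $\alpha\zeta=O(\alpha^{2/3})$. Thus $\sqrt{3-2\alpha\zeta}\to\sqrt3$ positive, consistent with $\sqrt{6}$ positive for $-\Phi_0''=6$. Since $3-2\alpha\zeta$ never vanishes on the subcritical range (its real part equals $(6-4\delta^2v)/2>0$ by \eqref{eq:real-second-derivative}), the two continuous branches agree everywhere.

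\emph{Assembling the leading term.} The leading term is therefore
\[
(-1)^n n^{-1/2}\Rea\bigl\{B\ee^{i\beta}H_e^n\ee^{in\Theta}\bigr\}=(-1)^n n^{-1/2}BH_e^n\cos(n\Theta+\beta).
\]

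\emph{Remainder.} The remainder in \eqref{eq:uniform-leading} is $O(\delta\Lambda^{-3/2}H_e^n)$, and I need it in the form $n^{-1/2}B(\alpha_n)H_e^n\cdot O(\Lambda^{-1})$. This is the only step needing care, namely uniformity of the lower bound on $B$ as $\alpha\to0$. From the computation above, $n^{-1/2}B=\frac{4\delta^2}{\sqrt{\pi\Lambda}}|\zeta|/|\sqrt{3-2\alpha\zeta}|$. Since $\delta|\zeta|=|w_+|$ is bounded above and below on $[0,\delta_0]$ (the saddles stay in a compact set away from $0$ by Lemma~\ref{lem:saddle-pair}), and $|3-2\alpha\zeta|$ is bounded above and below by uniform nondegeneracy \eqref{eq:real-second-derivative}, we get $n^{-1/2}B\asymp\delta\Lambda^{-1/2}$ uniformly. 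Hence the remainder is
\[
O(\delta\Lambda^{-3/2}H_e^n)=n^{-1/2}BH_e^n\,O(\Lambda^{-1}),
\]
with constants depending only on $\alpha_0$. Factoring out gives \eqref{eq:oscillatory-master}.
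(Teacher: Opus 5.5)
Your proposal is correct and follows the paper's proof. Both identify $(w_+^2+\delta^2)^{-1}=\delta\zeta_\alpha$ and $-\Lambda\Phi_\delta''(w_+)=2n\alpha^{1/3}(3-2\alpha\zeta_\alpha)$, then use \eqref{eq:phase-action-identity} to rewrite \eqref{eq:uniform-leading} as $(-1)^n n^{-1/2}\Rea\{C(\alpha_n)\ee^{n\psi_{\alpha_n}(\zeta_{\alpha_n})}\}$. The only differences are cosmetic: you read the second derivative directly from \eqref{eq:phase-second-saddle} rather than through $\psi_\alpha''=-\alpha\Phi_\delta''$, and you make explicit the branch consistency and the uniform comparability $n^{-1/2}B(\alpha_n)\asymp\delta_n\Lambda_n^{-1/2}$, which the paper uses implicitly when converting the remainder.
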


\begin{proof}
From \eqref{eq:zeta-transform}, $w_+=-i\delta\zeta_\alpha$, with $\delta=\alpha^{1/3}$.  The saddle equation gives
\[
 \frac{1}{w_+^2+\delta^2}=iw_+=\delta\zeta_\alpha.
\]
Moreover, since $\psi_\alpha(\zeta)=\delta\Phi_\delta(-i\delta\zeta)$,
\[
 \psi_\alpha''(\zeta_\alpha)
 =-\alpha\Phi_\delta''(w_+).
\]
Differentiating \eqref{eq:transition-psi} and using \eqref{eq:transition-cubic} yields
\begin{equation}\label{eq:psi-second-simplified}
 \psi_\alpha''(\zeta_\alpha)
 =2\alpha(3-2\alpha\zeta_\alpha).
\end{equation}
Consequently,
\[
 -\Lambda\Phi_\delta''(w_+)
 =2n\alpha^{1/3}(3-2\alpha\zeta_\alpha).
\]
Substitution into \eqref{eq:uniform-leading}, together with the exact action identity \eqref{eq:phase-action-identity}, gives
\[
 a_n(\lambda_n)
 =(-1)^n n^{-1/2}
 \Rea\!\left\{
 C(\alpha_n)\ee^{n\psi_{\alpha_n}(\zeta_{\alpha_n})}
 \right\}
 +O\!\left(
 n^{-1/2}B(\alpha_n)H_e(\alpha_n)^n\Lambda_n^{-1}
 \right).
\]
Writing $C=B\ee^{i\beta}$ and separating the real and imaginary parts of the action proves \eqref{eq:oscillatory-master}.
\end{proof}

\begin{remark}[Envelope versus individual coefficients]\label{rem:envelope-coefficient}
The natural nonoscillatory envelope in \eqref{eq:oscillatory-master} is
\begin{equation}\label{eq:coefficient-envelope}
 \mathcal E_n(\lambda_n)
 :=n^{-1/2}B(\alpha_n)H_e(\alpha_n)^n.
\end{equation}
Because the cosine in \eqref{eq:oscillatory-master} may be small along subsequences, asymptotic statements for $\log|a_n|$ require care.  The rate laws below are therefore first stated for the saddle envelope.  Section~\ref{sec:error} will show that these saddle-action rates govern the approximation error for the fixed, algebraically sublinear, and linear pole scalings considered explicitly there; in the general parameter-uniform setting, logarithmic equivalence requires the mild additional condition $\Lambda_m/\log m\to\infty$.
\end{remark}

\subsection{Small-$\alpha$ expansion and scaling transition}
\label{subsec:small-alpha}
\label{subsec:pole-scaling}

The limit $\alpha\downarrow0$ is where the geometric concentrated-pole regime turns into flat-endpoint asymptotics. Write $\delta=\alpha^{1/3}$. In the scaled variable this limit is regular: the right lower saddle is analytic in $\delta^2$ near zero. Expanding \eqref{eq:w-cubic} about \(w_*=\frac{\sqrt3}{2}-\frac{i}{2}\) gives
\begin{equation}\label{eq:w-small-delta}
 w_+(\delta)
 =w_*-\frac{\sqrt3+i}{6}\,\delta^2+O(\delta^6).
\end{equation}
Substitution into the scaled action gives the following expansion.

\begin{lemma}[Small-$\alpha$ action]\label{lem:small-alpha-action}
As $\alpha\downarrow0$,
\begin{align}
 \log H_e(\alpha)
 &=-\frac32\alpha^{1/3}
   +\frac23\alpha
   -\frac1{30}\alpha^{5/3}
   +O(\alpha^{7/3}),
 \label{eq:He-small-alpha}\\
 \Theta(\alpha)
 &=\frac{3\sqrt3}{2}\alpha^{1/3}
   -\frac{\sqrt3}{30}\alpha^{5/3}
   +O(\alpha^{7/3}).
 \label{eq:Theta-small-alpha}
\end{align}
The amplitude and phase shift in \eqref{eq:C-alpha-def} satisfy
\begin{equation}\label{eq:B-beta-small-alpha}
 B(\alpha)
 =\frac{4}{\sqrt{3\pi}}\alpha^{1/6}
  \left(1+O(\alpha^{2/3})\right),
 \qquad
 \beta(\alpha)
 =\frac{\pi}{3}+O(\alpha^{2/3}).
\end{equation}
\end{lemma}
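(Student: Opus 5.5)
The plan is to work throughout in the scaled variables of Section~\ref{sec:uniform-saddle}, where the limit $\delta\downarrow0$ is regular. We then read off all four expansions from one analytic function of $\delta^2$.

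\emph{Step 1: analyticity in $\delta^2$.} Since $\frac1\delta\log\frac{w+i\delta}{w-i\delta}=\frac{2}{\delta}\operatorname{artanh}(i\delta/w)$, for $w$ in a compact set avoiding $0$ we have the convergent expansion
\[
 \Phi_\delta(w)=-w^2+\frac{2i}{w}-\frac{2i\delta^2}{3w^3}+\frac{2i\delta^4}{5w^5}-\cdots .
\]
So $\Phi_\delta(w)=\Phi(w,\delta^2)$ is jointly analytic in $(w,\delta^2)$ near $(w_*,0)$. No branch choice enters here, because the series contains no logarithm. This also fixes the branch of $\psi_\alpha$ used in $\Theta$ by continuity from $\delta=0$. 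Because $\Phi_0''(w_*)=-6\ne0$, the implicit function theorem applied to $\partial_w\Phi(w,\delta^2)=0$, which is the cubic \eqref{eq:w-cubic}, shows that $w_+(\delta)$ is analytic in $\delta^2$. Solving order by order gives \eqref{eq:w-small-delta}; the expansion is in powers of $\delta^2$, and the claimed $O(\delta^6)$ remainder should come from the structure of the cubic. By \eqref{eq:phase-action-identity}, $\psi_\alpha(\zeta_\alpha)=\delta\,\Phi(w_+(\delta),\delta^2)=\delta\,g(\delta^2)$ with $g$ analytic. Hence $\psi_\alpha(\zeta_\alpha)$ has an expansion in odd powers $\alpha^{1/3},\alpha,\alpha^{5/3},\dots$, and the truncation error after three terms is $O(\delta^7)=O(\alpha^{7/3})$.

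\emph{Step 2: the coefficients.} The leading term is $g(0)=\Phi_0(w_*)=-\tfrac32+\tfrac{3\sqrt3}{2}i$, already computed after Theorem~\ref{thm:uniform-coeff}. For the $\delta^2$ coefficient, stationarity of $w_+$ means the shift of the saddle does not contribute at first order. Using $w_*^3=-i$, this coefficient is $\partial_{\delta^2}\Phi(w_*,0)=-2i/(3w_*^3)=2/3$, which is real. This gives the term $\tfrac23\alpha$ in $\log H_e$ and no $\alpha$ term in $\Theta$.

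The $\delta^4$ coefficient is the only genuine computation. By the standard second-order formula for a critical value, it equals
\[
 \frac{2i}{5w_*^5}\;-\;\frac{\bigl(\partial_w\partial_{\delta^2}\Phi(w_*,0)\bigr)^2}{2\,\Phi_0''(w_*)},
 \qquad
 \partial_w\partial_{\delta^2}\Phi=\frac{2i}{w^4}.
\]
This should be simplified with $w_*^3=-i$ and $w_*=e^{-i\pi/6}$, and the target is $-\tfrac1{30}(1+\sqrt3\,i)$. Equivalently, one can insert \eqref{eq:w-small-delta} into the expansion of $\Phi$ and cross-check. Multiplying by $\delta=\alpha^{1/3}$ and separating real and imaginary parts then yields \eqref{eq:He-small-alpha} and \eqref{eq:Theta-small-alpha}. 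I expect this $\delta^4$ bookkeeping to be the main obstacle. The risk is purely arithmetic: combining the two contributions correctly and verifying the $1/30$ constants.

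\emph{Step 3: amplitude and phase shift.} From $\zeta_\alpha=iw_+/\delta$ we get $\sqrt\alpha\,\zeta_\alpha=\delta^{1/2}\,iw_+(\delta)$ and $\alpha\zeta_\alpha=i\delta^2w_+=O(\delta^2)$. Therefore
\[
 C(\alpha)=\frac{4}{\sqrt\pi}\,\delta^{1/2}\,\frac{iw_*}{\sqrt3}\,\bigl(1+O(\delta^2)\bigr).
\]
Here the square root is the continuous branch equal to $\sqrt3$ at $\delta=0$, as prescribed after \eqref{eq:C-alpha-def}. Since $iw_*=\tfrac12+\tfrac{\sqrt3}{2}i=e^{i\pi/3}$, we obtain $B(\alpha)=\frac{4}{\sqrt{3\pi}}\alpha^{1/6}(1+O(\alpha^{2/3}))$ and $\beta(\alpha)=\pi/3+O(\alpha^{2/3})$. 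The $O(\delta^2)$ relative error in the modulus and in the argument is legitimate because the bracketed factor is analytic in $\delta^2$ and equals $1$ at $\delta=0$.
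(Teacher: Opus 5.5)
Your proposal is correct and follows essentially the paper's route: analyticity of $w_+$ in $\delta^2$ by the implicit function theorem, the identity $\psi_\alpha(\zeta_\alpha)=\delta\Phi_\delta(w_+(\delta))$, and $\zeta_\alpha=iw_+/\delta$ for $C(\alpha)$. Your second-order critical-value formula is just a packaged form of the paper's substitution of $w_*+c_2\delta^2$ into the phase; only $c_2$ matters, because any $c_4$ term is multiplied by $\Phi_0'(w_*)=0$, so the $O(\delta^6)$ claim is not needed. The arithmetic you left open does close: $\frac{2i}{5w_*^5}=-\frac{1+\sqrt3\,i}{5}$ and $-\frac{(2i/w_*^4)^2}{2\Phi_0''(w_*)}=-\frac{1}{3w_*^8}=\frac{1+\sqrt3\,i}{6}$, which sum to $-\frac{1+\sqrt3\,i}{30}$.
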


\begin{proof}
The implicit function theorem applied to $w(w^2+\delta^2)=-i$ at $(w,\delta)=(w_*,0)$ gives a convergent expansion in even powers of $\delta$.  To make the first nontrivial coefficients explicit, write
\[
 w_+(\delta)=w_*+c_2\delta^2+c_4\delta^4+O(\delta^6).
\]
Because $w_*^3=-i$, the coefficients of $\delta^2$ and $\delta^4$ in the cubic equation give
\[
 3w_*^2c_2+w_*=0,
 \qquad
 3w_*^2c_4+3w_*c_2^2+c_2=0.
\]
Hence $c_2=-(\sqrt3+i)/6$, while $3w_*c_2^2+c_2=0$ forces $c_4=0$, which proves \eqref{eq:w-small-delta}.  Now use
\[
 \psi_\alpha(\zeta_\alpha)
 =\delta\Phi_\delta(w_+(\delta)).
\]
Expanding the logarithm in \eqref{eq:scaled-phase} and substituting the saddle expansion gives
\begin{equation}\label{eq:psi-small-alpha-complex}
 \psi_\alpha(\zeta_\alpha)
 =\left(-\frac32+\frac{3\sqrt3}{2}i\right)\delta
  +\frac23\delta^3
  -\frac{1+i\sqrt3}{30}\delta^5
  +O(\delta^7).
\end{equation}
Taking real and imaginary parts and using $\delta=\alpha^{1/3}$ proves \eqref{eq:He-small-alpha}--\eqref{eq:Theta-small-alpha}.  Finally, \eqref{eq:C-alpha-def}, \eqref{eq:zeta-transform}, and \eqref{eq:w-small-delta} give
\[
 C(\alpha)
 =\frac{4}{\sqrt{3\pi}}\alpha^{1/6}\ee^{i\pi/3}
 \left(1+O(\alpha^{2/3})\right),
\]
which is equivalent to \eqref{eq:B-beta-small-alpha}.
\end{proof}

The fixed-pole asymptotics now follow as a direct corollary of the uniform theorem rather than from a separate endpoint calculation.

\begin{corollary}[Fixed-$\lambda$ coefficient asymptotics]\label{cor:fixed-lambda-strong}
For every fixed $\lambda>0$,
\begin{align}
 a_n(\lambda)
 ={}&(-1)^n\frac{4\lambda^{1/6}}{\sqrt{3\pi}}\,
 n^{-2/3}
 \exp\!\left(
 -\frac32\lambda^{1/3}n^{2/3}+\frac23\lambda
 \right)
 \nonumber\\
 &\quad\times\left[
 \cos\!\left(
 \frac{3\sqrt3}{2}\lambda^{1/3}n^{2/3}+\frac{\pi}{3}
 \right)
 +O(n^{-2/3})
 \right].
 \label{eq:fixed-lambda-strong}
\end{align}
The implied constant may depend on $\lambda$.
\end{corollary}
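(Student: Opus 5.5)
The plan is to specialize Proposition~\ref{prop:oscillatory-master} to the constant sequence $\lambda_n=\lambda$, so that $\alpha_n=\lambda/n\downarrow0$ and $\Lambda_n=\lambda^{1/3}n^{2/3}\to\infty$. The hypotheses \eqref{eq:uniform-range} hold for all $n\ge n_0(\lambda)$ with any fixed $\alpha_0\in(0,\alpha_c)$; finitely many small $n$ are absorbed into the implied constant. The proposition then gives
\[
 a_n(\lambda)=(-1)^n n^{-1/2}B(\alpha_n)H_e(\alpha_n)^n\bigl[\cos(n\Theta(\alpha_n)+\beta(\alpha_n))+O(n^{-2/3})\bigr],
\]
since $\Lambda_n^{-1}\asymp n^{-2/3}$. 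What remains is to insert the small-$\alpha$ expansions of Lemma~\ref{lem:small-alpha-action} and track every error at relative order $n^{-2/3}$.

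\textbf{Envelope.} Multiplying \eqref{eq:He-small-alpha} by $n$ gives
\[
 n\log H_e(\lambda/n)=-\tfrac32\lambda^{1/3}n^{2/3}+\tfrac23\lambda-\tfrac1{30}\lambda^{5/3}n^{-2/3}+O(n^{-4/3}).
\]
Hence $H_e^n=\exp(-\tfrac32\lambda^{1/3}n^{2/3}+\tfrac23\lambda)(1+O(n^{-2/3}))$. From \eqref{eq:B-beta-small-alpha}, $B(\alpha_n)=\frac{4}{\sqrt{3\pi}}\lambda^{1/6}n^{-1/6}(1+O(n^{-2/3}))$, since $\alpha_n^{2/3}\asymp n^{-2/3}$. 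Combining these, $n^{-1/2}B H_e^n$ equals the stated prefactor $\frac{4\lambda^{1/6}}{\sqrt{3\pi}}n^{-2/3}\exp(\cdots)$ times $(1+O(n^{-2/3}))$. The multiplicative error, applied to a bracket of size $O(1)$, becomes an additive $O(n^{-2/3})$ inside the bracket.

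\textbf{Phase.} By \eqref{eq:Theta-small-alpha},
\[
 n\Theta(\lambda/n)=\tfrac{3\sqrt3}{2}\lambda^{1/3}n^{2/3}-\tfrac{\sqrt3}{30}\lambda^{5/3}n^{-2/3}+O(n^{-4/3}),
\]
and $\beta(\alpha_n)=\pi/3+O(n^{-2/3})$. Because the cosine is Lipschitz with constant $1$, the argument error $O(n^{-2/3})$ changes the cosine by at most an additive $O(n^{-2/3})$. This yields exactly \eqref{eq:fixed-lambda-strong}.

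\textbf{Main obstacle.} The delicate step is ensuring that the $O(\cdot)$ terms in Lemma~\ref{lem:small-alpha-action} are genuine and uniform as $\alpha\downarrow0$, in particular that $B$ and $\beta$ really have relative corrections of order $\alpha^{2/3}=\delta^2$. These rest on analyticity of $w_+(\delta)$ in $\delta^2$ (implicit function theorem at $w_*$) and on $C(\alpha)=\frac{4}{\sqrt\pi}\,\delta\,(\delta\zeta_\alpha)(3-2\alpha\zeta_\alpha)^{-1/2}$. Here $\delta\zeta_\alpha=iw_+$ is analytic in $\delta^2$, and $2\alpha\zeta_\alpha=2i\delta^2w_+$ is $O(\delta^2)$, so the branch of the square root stays near $\sqrt3$. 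I would verify this expression directly rather than quote the lemma. The one point needing care is multiplying the exponential factor by $n$: an $O(\alpha^{7/3})$ term times $n$ is $O(n^{-4/3})$, which is harmless, but the $\alpha^{5/3}$ terms contribute $O(n^{-2/3})$. They are kept only inside the error and not in the displayed main term, so the stated remainder is the best this route gives.
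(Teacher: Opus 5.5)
Your proposal is correct and is essentially the paper's proof. You specialize Proposition~\ref{prop:oscillatory-master} to $\alpha_n=\lambda/n$ (so $\Lambda_n^{-1}\asymp n^{-2/3}$), insert the small-$\alpha$ expansions of Lemma~\ref{lem:small-alpha-action} for $H_e$, $\Theta$, $B$ and $\beta$, and absorb every $O(n^{-2/3})$ correction additively into the bounded bracket. One harmless slip occurs in your side remark: since $\sqrt\alpha=\delta^{3/2}$, the correct factorization is $C(\alpha)=\frac{4}{\sqrt\pi}\,\delta^{1/2}(\delta\zeta_\alpha)(3-2\alpha\zeta_\alpha)^{-1/2}$, not $\delta(\delta\zeta_\alpha)(\cdots)$. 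This changes only a pure-power prefactor, so your relative-order reasoning is unaffected.
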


\begin{proof}
Set $\alpha=\lambda/n$ in Proposition~\ref{prop:oscillatory-master}.  Equations \eqref{eq:He-small-alpha}--\eqref{eq:B-beta-small-alpha} give
\begin{align*}
 n\log H_e(\lambda/n)
 &=-\frac32\lambda^{1/3}n^{2/3}+\frac23\lambda+O(n^{-2/3}),\\
 n\Theta(\lambda/n)
 &=\frac{3\sqrt3}{2}\lambda^{1/3}n^{2/3}+O(n^{-2/3}),\\
 n^{-1/2}B(\lambda/n)
 &=\frac{4\lambda^{1/6}}{\sqrt{3\pi}}n^{-2/3}
   \left(1+O(n^{-2/3})\right).
\end{align*}
Also $\Lambda^{-1}=O(n^{-2/3})$.  Absorbing all lower-order corrections into the additive term in brackets proves \eqref{eq:fixed-lambda-strong}.
\end{proof}

\paragraph{Scaling consequences.}
The first term in \eqref{eq:He-small-alpha} already shows the transition. If $\lambda_n=o(n)$, the leading action is $-\tfrac32\lambda_n^{1/3}n^{2/3}$; if $\lambda_n$ is proportional to $n$, the full rate function $H_e$ must be retained and the decay becomes geometric. Since $\lambda_n=tq_n$ under \eqref{eq:mobius}, the same classification applies to the repeated pole $q_n$.

\begin{theorem}[Pole-scaling transition]\label{thm:pole-scaling-transition}
Let $\lambda_n>0$ and assume $\alpha_n=\lambda_n/n\le\alpha_0<\alpha_c$ for all sufficiently large $n$.

\begin{enumerate}
\item[(i)] \emph{Sublinear regime.}  Suppose
\begin{equation}\label{eq:sublinear-lambda}
 \lambda_n\sim c n^\gamma,
 \qquad c>0,\qquad 0\le\gamma<1.
\end{equation}
Then the saddle envelope \eqref{eq:coefficient-envelope} satisfies
\begin{equation}\label{eq:sublinear-envelope-rate}
 \lim_{n\to\infty}
 \frac{\log\mathcal E_n(\lambda_n)}
 {n^{(2+\gamma)/3}}
 =-\frac32 c^{1/3}.
\end{equation}
More precisely,
\begin{equation}\label{eq:sublinear-action-two-scale}
 n\log H_e(\alpha_n)
 =-\frac32\lambda_n^{1/3}n^{2/3}
  +\frac23\lambda_n
  -\frac1{30}\lambda_n^{5/3}n^{-2/3}
  +O\!\left(\lambda_n^{7/3}n^{-4/3}\right).
\end{equation}

\item[(ii)] \emph{Linear regime.}  Suppose
\begin{equation}\label{eq:linear-lambda}
 \frac{\lambda_n}{n}\longrightarrow\alpha
 \in(0,\alpha_0].
\end{equation}
Then
\begin{equation}\label{eq:linear-envelope-root-rate}
 \lim_{n\to\infty}
 \mathcal E_n(\lambda_n)^{1/n}
 =H_e(\alpha).
\end{equation}
If $\lambda_n=\alpha n$ exactly, then
\begin{equation}\label{eq:linear-strong-coeff}
 a_n(\alpha n)
 =(-1)^n n^{-1/2}B(\alpha)H_e(\alpha)^n
 \left[
 \cos\!\left(n\Theta(\alpha)+\beta(\alpha)\right)
 +O(n^{-1})
 \right].
\end{equation}
\end{enumerate}
\end{theorem}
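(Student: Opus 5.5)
The plan is to read both parts off the small-$\alpha$ expansion of Lemma~\ref{lem:small-alpha-action} together with Proposition~\ref{prop:oscillatory-master}, using only the continuity of $H_e$, $B$, $\Theta$, $\beta$ on compact subintervals of $(0,\alpha_c)$.

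For part (i), write $\log\mathcal E_n(\lambda_n)=-\tfrac12\log n+\log B(\alpha_n)+n\log H_e(\alpha_n)$. Since $\gamma<1$, $\alpha_n\to0$, so \eqref{eq:He-small-alpha} applies. Multiplying by $n$ and substituting $\alpha_n=\lambda_n/n$ gives term by term
\[
 n\alpha_n^{1/3}=\lambda_n^{1/3}n^{2/3},\qquad n\alpha_n=\lambda_n,\qquad n\alpha_n^{5/3}=\lambda_n^{5/3}n^{-2/3},\qquad n\alpha_n^{7/3}=\lambda_n^{7/3}n^{-4/3}.
\]
This yields \eqref{eq:sublinear-action-two-scale}, provided the $O(\alpha^{7/3})$ constant in Lemma~\ref{lem:small-alpha-action} is uniform. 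It is, because the expansion \eqref{eq:psi-small-alpha-complex} comes from an analytic function of $\delta^2$ near $0$.

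For the limit \eqref{eq:sublinear-envelope-rate}, divide by $n^{(2+\gamma)/3}$. The leading term gives $-\tfrac32(\lambda_n/n^\gamma)^{1/3}\to-\tfrac32c^{1/3}$. The other action terms are relatively smaller: $\lambda_n/(\lambda_n^{1/3}n^{2/3})=\alpha_n^{2/3}\to0$, and likewise for the higher terms. By \eqref{eq:B-beta-small-alpha}, $\log B(\alpha_n)=\tfrac16\log\alpha_n+O(1)=O(\log n)$, and $-\tfrac12\log n$ is also $O(\log n)$. Both are $o(n^{(2+\gamma)/3})$ since the exponent is at least $2/3$.

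The one point needing care is that a fixed $\lambda$ ($\gamma=0$) is allowed and uniformity is only assumed for $\alpha_n\le\alpha_0$. This is harmless here, because the envelope statements involve only the explicit functions and not the coefficient theorem. Its hypotheses ($\Lambda_n=\lambda_n^{1/3}n^{2/3}\to\infty$) hold anyway.

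For part (ii), $\alpha_n\to\alpha\in(0,\alpha_0]$. Then $\mathcal E_n^{1/n}=n^{-1/(2n)}B(\alpha_n)^{1/n}H_e(\alpha_n)$. The function $H_e$ is continuous at $\alpha$, because $\zeta_\alpha$ is a simple root (Lemma~\ref{lem:saddle-pair} via \eqref{eq:zeta-transform}) and hence continuous. Also $B$ is continuous and positive near $\alpha$, so $B(\alpha_n)^{1/n}\to1$; and $n^{-1/(2n)}\to1$. This proves \eqref{eq:linear-envelope-root-rate}.

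For $\lambda_n=\alpha n$ exactly, apply Proposition~\ref{prop:oscillatory-master} with $\alpha_n\equiv\alpha$ and $\Lambda_n=\alpha^{1/3}n$. The remainder is then $O(\Lambda_n^{-1})=O(n^{-1})$, with the constant depending on $\alpha$, giving \eqref{eq:linear-strong-coeff}.

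No step is a genuine obstacle. The only thing to verify is that the remainder in \eqref{eq:He-small-alpha} is uniform, so that multiplying it by $n$ is legitimate in \eqref{eq:sublinear-action-two-scale}; this follows from the analyticity in $\delta^2$ noted above.
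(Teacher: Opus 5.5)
Your proposal is correct and follows essentially the same route as the paper. It multiplies the small-$\alpha$ expansion \eqref{eq:He-small-alpha} by $n$ and treats $\log\bigl(n^{-1/2}B(\alpha_n)\bigr)=O(\log n)$ as negligible; in the linear regime it takes $n$th roots using continuity of $H_e$ and two-sided bounds on $B$, and it specializes Proposition~\ref{prop:oscillatory-master} with $\Lambda_n=\alpha^{1/3}n$. Your extra uniformity check on the $O(\alpha^{7/3})$ remainder is harmless but automatic, since a Landau bound as $\alpha\downarrow0$ already applies along any sequence $\alpha_n\to0$.
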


\begin{proof}
In the sublinear regime, $\alpha_n\to0$ and
\[
 \Lambda_n=(\lambda_n n^2)^{1/3}
 \sim c^{1/3}n^{(2+\gamma)/3}\to\infty,
\]
so Theorem~\ref{thm:uniform-coeff} applies.  Equation \eqref{eq:sublinear-action-two-scale} is obtained by multiplying \eqref{eq:He-small-alpha} by $n$ and using $\alpha_n=\lambda_n/n$.  The first term has order $n^{(2+\gamma)/3}$, whereas the second has order $n^\gamma$ and is strictly lower order because $\gamma<1$.  The logarithm of the algebraic factor $n^{-1/2}B(\alpha_n)$ is $O(\log n)$, hence negligible on the scale $n^{(2+\gamma)/3}$.  This proves \eqref{eq:sublinear-envelope-rate}.

In the linear regime, $B(\alpha_n)$ stays bounded above and below on a compact subset of $(0,\alpha_c)$ for all sufficiently large $n$, while $H_e(\alpha_n)\to H_e(\alpha)$.  Taking $n$th roots in \eqref{eq:coefficient-envelope} gives \eqref{eq:linear-envelope-root-rate}.  Formula \eqref{eq:linear-strong-coeff} is the specialization of \eqref{eq:oscillatory-master} to fixed $\alpha$, for which $\Lambda_n=\alpha^{1/3}n$.
\end{proof}

For physical poles, substitute $\lambda_n=tq_n$ in Theorem~\ref{thm:pole-scaling-transition}: the sublinear constant becomes $(tc)^{1/3}$ when $q_n\sim cn^\gamma$, and the linear ratio is $tq_n/n$. The corresponding rational-error laws follow in Section~\ref{sec:error}.

\subsection{The optimal linearly moving pole}
\label{subsec:optimal-alpha}

The minimizer of the geometric rate can now be read directly from the saddle action. This is not a new proof of the classical best-approximation result; it explains why the same optimal ratio emerges from the coefficient geometry and provides derivatives needed later for mismatch analysis.

\begin{proposition}[Rate-function identities]\label{prop:He-derivatives}
For $0<\alpha<\alpha_c$,
\begin{equation}\label{eq:He-first-derivative}
 \frac{d}{d\alpha}\log H_e(\alpha)
 =\Rea(\zeta_\alpha^2).
\end{equation}
The function $H_e$ has a unique minimizer in $(0,\alpha_c)$,
\begin{equation}\label{eq:alpha-star}
 \alpha_*=\frac{1}{\sqrt2},
 \qquad
 \zeta_{\alpha_*}=\ee^{i\pi/4},
\end{equation}
and
\begin{equation}\label{eq:He-minimum}
 H_e(\alpha_*)=\sqrt2-1.
\end{equation}
Moreover,
\begin{equation}\label{eq:He-second-at-optimum}
 \left(\log H_e\right)''(\alpha_*)
 =\frac{2\sqrt2}{5}
 =\frac{4}{5\sqrt2}>0.
\end{equation}
In particular, $H_e$ decreases on $(0,\alpha_*)$ and increases on $(\alpha_*,\alpha_c)$, with $0<H_e(\alpha)<1$ throughout $(0,\alpha_c)$.
\end{proposition}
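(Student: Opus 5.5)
The approach is the envelope theorem: the saddle condition makes the $\zeta$-derivative of $\psi_\alpha$ vanish, so only the explicit $\alpha$-dependence survives differentiation, and everything else is algebra on the cubic.

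\textbf{Derivative formula.} On $(0,\alpha_c)$ the root $\zeta_\alpha$ is simple, because $\psi_\alpha''(\zeta_\alpha)=2\alpha(3-2\alpha\zeta_\alpha)\neq0$ by Lemma~\ref{lem:saddle-pair} and \eqref{eq:psi-second-simplified}. By the implicit function theorem, $\zeta_\alpha$ is then smooth in $\alpha$. Write $g(\alpha)=\psi_\alpha(\zeta_\alpha)$. The chain rule gives
\[
 g'(\alpha)=\partial_\alpha\psi_\alpha(\zeta_\alpha)+\psi_\alpha'(\zeta_\alpha)\,\zeta_\alpha' .
\]
We have $\psi_\alpha'(\zeta)=\frac{2}{\zeta^2-1}+2\alpha\zeta$, and this vanishes exactly when $\alpha(\zeta^3-\zeta)+1=0$, i.e. at $\zeta_\alpha$. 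Hence $g'(\alpha)=\zeta_\alpha^2$. Taking real parts gives \eqref{eq:He-first-derivative}; the branch of the logarithm is irrelevant.

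\textbf{Critical point and minimum value.} Critical points satisfy $\Rea\zeta_\alpha^2=0$. Since $\zeta_\alpha$ lies in the open first quadrant, this forces $\zeta_\alpha=r\ee^{i\pi/4}$. Substituting into the cubic gives $\alpha(r^3\ee^{3i\pi/4}-r\ee^{i\pi/4})=-1$. Multiplying by $\ee^{-i\pi/4}$ yields $\alpha(ir^3-r)=-\ee^{-i\pi/4}$. Comparing real and imaginary parts gives $\alpha r=1/\sqrt2$ and $\alpha r^3=1/\sqrt2$. So $r=1$, $\alpha=1/\sqrt2$, and the critical point is unique, which proves \eqref{eq:alpha-star}.

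For the value: with $\zeta=\ee^{i\pi/4}$ we get $\Rea\zeta^2=0$. Also $|\zeta-1|^2=2-\sqrt2$ and $|\zeta+1|^2=2+\sqrt2$. Therefore
\[
 H_e=\sqrt{(2-\sqrt2)/(2+\sqrt2)}=(2-\sqrt2)/\sqrt2=\sqrt2-1 .
\]

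\textbf{Second derivative.} Differentiate the cubic: $(\zeta^3-\zeta)+\alpha(3\zeta^2-1)\zeta'=0$. Using $\zeta^3-\zeta=-1/\alpha$, this gives $\zeta'=1/(\alpha^2(3\zeta^2-1))$. Then $(\log H_e)''=\Rea(2\zeta\zeta')$. At the optimum, $\zeta=\ee^{i\pi/4}$, $\zeta^2=i$ and $\alpha^2=1/2$, so
\[
 2\zeta\zeta'=\frac{4\ee^{i\pi/4}}{3i-1}.
\]
Multiplying by $(-1-3i)/10$ gives $\frac{4}{10}\ee^{i\pi/4}(-1-3i)$. Its real part is $\frac{4}{10}\cdot\frac{1}{\sqrt2}(-1+3)=\frac{4}{5\sqrt2}=\frac{2\sqrt2}{5}$, which is \eqref{eq:He-second-at-optimum}.

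\textbf{Monotonicity and the bound $0<H_e<1$.} This is the main obstacle. With a unique critical point, $\Rea\zeta_\alpha^2$ has constant sign on each of $(0,\alpha_*)$ and $(\alpha_*,\alpha_c)$, so each sign can be read off at one point.
\begin{itemize}
 \item As $\alpha\downarrow0$, \eqref{eq:zeta-transform} and \eqref{eq:delta-zero-saddles} give $\zeta_\alpha\approx i\alpha^{-1/3}w_*$. Then $\zeta^2\approx-\alpha^{-2/3}w_*^2=-\alpha^{-2/3}\ee^{-i\pi/3}$, whose real part is negative. So $H_e$ decreases on $(0,\alpha_*)$.
 \item On $(\alpha_*,\alpha_c)$ the sign must be positive. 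One check is at coalescence, where $\zeta\to 1/\sqrt3$, giving $\zeta^2\to 1/3>0$. Alternatively, positivity of the second derivative at $\alpha_*$ already forces the sign change. So $H_e$ increases on $(\alpha_*,\alpha_c)$.
\end{itemize}
Finally, the small-$\alpha$ expansion \eqref{eq:He-small-alpha} gives $\log H_e(\alpha)\to0^-$ as $\alpha\downarrow0$, so $H_e<1$ near $0$. Because $H_e$ is monotone on each side of $\alpha_*$, it remains to show that $\log H_e$ stays negative up to $\alpha_c$. At coalescence $\zeta=1/\sqrt3$, so
\[
 \log\left|\frac{\zeta-1}{\zeta+1}\right|=\log(2-\sqrt3),\qquad \alpha\zeta^2=\frac{\sqrt3}{2}.
\]
Hence $\log H_e(\alpha_c^-)=\log(2-\sqrt3)+\frac{\sqrt3}{2}\approx-1.317+0.866<0$. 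Continuity then gives $H_e<1$ on the whole interval, and $H_e>0$ holds automatically since $H_e$ is an exponential.
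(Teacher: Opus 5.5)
Your proof is correct and follows the paper's route. The envelope-theorem identity $\frac{d}{d\alpha}\psi_\alpha(\zeta_\alpha)=\zeta_\alpha^2$, the evaluation at $\zeta=\ee^{i\pi/4}$, the second derivative via $\zeta_\alpha'=1/(\alpha^2(3\zeta_\alpha^2-1))$, and the endpoint limits $H_e(0+)=1$, $H_e(\alpha_c-)=(2-\sqrt3)\ee^{\sqrt3/2}<1$ are exactly what the paper uses. The only difference is in how you locate the critical point and its signs. You substitute $\zeta=r\ee^{i\pi/4}$ into the cubic, then fix the signs by continuity plus endpoint or second-derivative checks. The paper instead writes $\Rea\zeta_\alpha^2=(\delta^2-2v^2)/\delta^2$ with $\delta^2=4v^2-1/(2v)$, so the sign is that of the increasing function $2v^2-1/(2v)$ and uniqueness and monotonicity come out together.
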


\begin{proof}
Since $\zeta_\alpha$ is a simple stationary point of $\psi_\alpha$, $\partial_\zeta\psi_\alpha(\zeta_\alpha)=0$. Hence $\frac{d}{d\alpha}\psi_\alpha(\zeta_\alpha)=\partial_\alpha\psi_\alpha(\zeta_\alpha)=\zeta_\alpha^2$, which proves \eqref{eq:He-first-derivative} after taking real parts.

To determine its sign, write $w_+=u-iv$, so $\zeta_\alpha=iw_+/\delta=(v+iu)/\delta$ with $\delta=\alpha^{1/3}$. Then, by \eqref{eq:uv-relations},
\begin{equation}\label{eq:Re-zeta-square-v}
 \Rea(\zeta_\alpha^2)
 =\frac{v^2-u^2}{\delta^2}
 =\frac{\delta^2-2v^2}{\delta^2}.
\end{equation}
The second relation in \eqref{eq:uv-relations} can be written as
\begin{equation}\label{eq:delta-v-relation}
 \delta^2=4v^2-\frac{1}{2v}.
\end{equation}
Its right-hand side is strictly increasing for $v>0$, so $v$ increases strictly with $\delta$. Equation \eqref{eq:Re-zeta-square-v} changes sign only when $\delta^2=2v^2$, which with \eqref{eq:delta-v-relation} gives $4v^3=1$. Thus $v=2^{-2/3}$, $\delta=2^{-1/6}$, and $\alpha=2^{-1/2}$. At this point $u=v$, so $\zeta_{\alpha_*}=\ee^{i\pi/4}$.  This proves uniqueness of the stationary point and the asserted monotonicity.

At $\alpha_*$, $\zeta_{\alpha_*}^2=i$, so the term $\alpha_*\zeta_{\alpha_*}^2$ in the action is purely imaginary.  Thus
\[
 H_e(\alpha_*)
 =\left|
 \frac{\ee^{i\pi/4}-1}
 {\ee^{i\pi/4}+1}
 \right|
 =\sqrt2-1,
\]
which proves \eqref{eq:He-minimum}.

Finally, implicit differentiation of \eqref{eq:transition-cubic} yields
\begin{equation}\label{eq:zeta-alpha-derivative}
 \zeta_\alpha'
 =\frac{1}{\alpha^2(3\zeta_\alpha^2-1)}.
\end{equation}
Differentiating \eqref{eq:He-first-derivative} and evaluating at $\alpha_*=1/\sqrt2$, $\zeta_{\alpha_*}=\ee^{i\pi/4}$, gives
\[
 \left(\log H_e\right)''(\alpha_*)
 =\Rea\left(
 \frac{2\zeta_{\alpha_*}}
 {\alpha_*^2(3\zeta_{\alpha_*}^2-1)}
 \right)
 =\frac{2\sqrt2}{5}.
\]
The first-quadrant root tends to $1/\sqrt3$ as $\alpha\uparrow\alpha_c$, so $H_e(0+)=1$ and $H_e(\alpha_c-)=(2-\sqrt3)\ee^{\sqrt3/2}<1$. These limits, together with the monotonicity above, give $H_e<1$ throughout the subcritical interval.
\end{proof}

The optimal ratio and root factor are classical \cite{Andersson1981}. The differential identities above connect them to the present coefficient expansion and will also quantify the effect of degree--pole mismatch in Section~\ref{sec:design}.

\section{From coefficients to approximation errors}
\label{sec:error}

The coefficient formulas of Sections~\ref{sec:uniform-saddle}--\ref{sec:transition} are oscillatory. A single coefficient can therefore be arbitrarily small relative to its saddle envelope, so a coefficient asymptotic cannot simply be relabeled as an approximation-error asymptotic. The error, however, sees a whole tail of Chebyshev modes. We exploit this by combining an exact differential identity for the tail action with a short block of neighboring coefficients that cannot all cancel at once.

\subsection{Chebyshev projection and the tail action}
\label{subsec:error-notation}

Let
\begin{equation}\label{eq:cheb-weighted-norm}
 \|g\|_{2,C}^2
 :=\int_{-1}^1\frac{|g(x)|^2}{\sqrt{1-x^2}}\,dx
\end{equation}
be the Chebyshev-weighted $L^2$ norm, and write
\begin{equation}\label{eq:cheb-projection}
 \Pi_mF_\lambda
 :=\frac{a_0(\lambda)}{2}
   +\sum_{k=1}^m a_k(\lambda)T_k.
\end{equation}
For the remainder
\begin{equation}\label{eq:projection-remainder}
 R_m(\lambda):=F_\lambda-\Pi_mF_\lambda,
\end{equation}
Parseval's identity gives
\begin{equation}\label{eq:parseval-tail}
 \|R_m(\lambda)\|_{2,C}^2
 =\frac{\pi}{2}\sum_{k=m+1}^\infty |a_k(\lambda)|^2,
\end{equation}
and absolute convergence of the Chebyshev series gives
\begin{equation}\label{eq:sup-tail-upper-basic}
 \|R_m(\lambda)\|_\infty
 \le \sum_{k=m+1}^\infty |a_k(\lambda)|.
\end{equation}
The best polynomial error $E_m(\lambda)$ was defined in \eqref{eq:best-poly-error-front}.

To study a coefficient tail with the target parameter $\lambda$ held fixed while the coefficient index varies, extend the saddle action continuously in the index.  For $x>0$ such that $0<\lambda/x<\alpha_c$, set
\begin{equation}\label{eq:continuous-tail-action}
 \mathcal S(x;\lambda)
 :=x\,\psi_{\lambda/x}\!\left(\zeta_{\lambda/x}\right),
 \qquad
 \mathcal A(x;\lambda):=\Rea\mathcal S(x;\lambda).
\end{equation}
Let
\begin{equation}\label{eq:R-rho-omega-def}
 \mathfrak r(\alpha)
 :=\frac{\zeta_\alpha-1}{\zeta_\alpha+1}
 =:\rho(\alpha)\ee^{i\omega(\alpha)},
 \qquad 0<\omega(\alpha)<\pi.
\end{equation}
The argument is chosen continuously on $(0,\alpha_c)$.  Since $\zeta_\alpha$ lies in the open first quadrant, $0<\rho(\alpha)<1$.

\begin{lemma}[Tail-action identities]\label{lem:tail-action-derivative}
For $0<\lambda/x<\alpha_c$,
\begin{equation}\label{eq:tail-action-derivative-complex}
 \frac{\partial}{\partial x}\mathcal S(x;\lambda)
 =\log\mathfrak r(\lambda/x).
\end{equation}
Consequently,
\begin{equation}\label{eq:tail-action-derivative-real}
 \frac{\partial}{\partial x}\mathcal A(x;\lambda)
 =\log\rho(\lambda/x)<0.
\end{equation}
Moreover, as $\alpha\downarrow0$,
\begin{equation}\label{eq:logR-small-alpha}
 \log\mathfrak r(\alpha)
 =(-1+i\sqrt3)\alpha^{1/3}
  +\frac{1+i\sqrt3}{45}\alpha^{5/3}
  +O(\alpha^{7/3}).
\end{equation}
Hence, for every $0<\alpha_0<\alpha_c$, there are constants $c_j,C_j>0$, depending only on $\alpha_0$, such that
\begin{align}
 c_1\alpha^{1/3}
 &\le -\log\rho(\alpha)
 \le C_1\alpha^{1/3},
 \label{eq:rho-comparable}\\
 c_2\alpha^{1/3}
 &\le \sin\omega(\alpha)
 \le C_2\alpha^{1/3},
 \qquad 0<\alpha\le\alpha_0.
 \label{eq:omega-comparable}
\end{align}
The amplitude in \eqref{eq:C-alpha-def} satisfies
\begin{equation}\label{eq:B-comparable}
 c_3\alpha^{1/6}\le B(\alpha)\le C_3\alpha^{1/6},
 \qquad 0<\alpha\le\alpha_0,
\end{equation}
and the phase functions obey
\begin{equation}\label{eq:phase-derivative-bounds}
 |\omega'(\alpha)|\le C_4\alpha^{-2/3},
 \qquad
 |\beta'(\alpha)|\le C_5\alpha^{-1/3}.
\end{equation}
\end{lemma}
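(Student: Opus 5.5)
The plan is to obtain \eqref{eq:tail-action-derivative-complex} by the same envelope-theorem argument used for \eqref{eq:He-first-derivative}, and then to read all the comparability bounds off small-$\alpha$ expansions, with continuity and nonvanishing handling $\alpha$ away from $0$. Write $\alpha=\lambda/x$, so $\mathcal S(x;\lambda)=x\,\psi_\alpha(\zeta_\alpha)$ and $d\alpha/dx=-\alpha/x$. Since $\partial_\zeta\psi_\alpha(\zeta_\alpha)=0$ and $\frac{d}{d\alpha}\psi_\alpha(\zeta_\alpha)=\zeta_\alpha^2$, we get
\[
 \partial_x\mathcal S=\psi_\alpha(\zeta_\alpha)-\alpha\zeta_\alpha^2=\log\frac{\zeta_\alpha-1}{\zeta_\alpha+1}=\log\mathfrak r(\alpha),
\]
with the branch inherited from Section~\ref{sec:uniform-saddle}. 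Taking real parts gives \eqref{eq:tail-action-derivative-real}. Negativity follows from $\Rea\zeta_\alpha>0$, which gives $|\zeta_\alpha-1|<|\zeta_\alpha+1|$.

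For \eqref{eq:logR-small-alpha}, use $\zeta_\alpha=iw_+/\delta$ together with \eqref{eq:w-small-delta}. This gives $\zeta_\alpha=\delta^{-1}(iw_*+ic_2\delta^2+O(\delta^6))$, and hence $1/\zeta_\alpha=O(\delta)$ is analytic in $\delta$. Then expand
\[
 \log\frac{\zeta-1}{\zeta+1}=-2\left(\zeta^{-1}+\tfrac13\zeta^{-3}+\tfrac15\zeta^{-5}+\cdots\right).
\]
The leading term is $-2\delta/(iw_*)$. With $iw_*=\tfrac12+\tfrac{\sqrt3}{2}i=\ee^{i\pi/3}$, this equals $-2\ee^{-i\pi/3}\delta=(-1+i\sqrt3)\delta$, which matches the claim. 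The $\delta^3$ coefficient must vanish: the $c_2$ correction to $1/\zeta$ cancels against the $\tfrac13\zeta^{-3}$ term, and this cancellation is consistent with \eqref{eq:psi-small-alpha-complex}, since $\log\mathfrak r=\psi-\alpha\zeta^2$ and one can differentiate that series termwise. The cleanest way to get the $\delta^5$ coefficient is to differentiate \eqref{eq:psi-small-alpha-complex} in the form $\log\mathfrak r(\alpha)=\psi-\alpha\,\frac{d\psi}{d\alpha}$. With $\psi=\sum p_k\delta^k$ and $\alpha\,d/d\alpha=\tfrac{\delta}{3}\,d/d\delta$, this gives $\sum p_k(1-k/3)\delta^k$. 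The resulting coefficients are $\tfrac23 p_1=-1+i\sqrt3$, then $0\cdot p_3$, then $-\tfrac23 p_5=\tfrac{1+i\sqrt3}{45}$. This confirms the formula. The analyticity of $\psi_\alpha(\zeta_\alpha)$ in $\delta$, coming from the implicit function theorem, justifies the termwise differentiation.

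The bounds then follow by a compactness argument. By \eqref{eq:logR-small-alpha}, $-\log\rho=\delta+O(\delta^5)$ and $\omega=\sqrt3\,\delta+O(\delta^5)$, so both are comparable to $\alpha^{1/3}$ near $0$. On $[\epsilon,\alpha_0]$ the functions $-\log\rho$ and $\sin\omega$ are continuous and positive. Positivity holds because $0<\rho<1$ and because $\omega\in(0,\pi)$: the point $\mathfrak r$ lies in the open upper half-plane whenever $\zeta$ is in the open first quadrant, since $\Ima\frac{\zeta-1}{\zeta+1}=\frac{2\Ima\zeta}{|\zeta+1|^2}>0$. They are therefore comparable to $\alpha^{1/3}$ there as well. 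For $B$, \eqref{eq:B-beta-small-alpha} handles small $\alpha$, and continuity and positivity of $B$ on compact subintervals handle the rest. For \eqref{eq:phase-derivative-bounds}, I will write $\omega$ and $\beta$ as analytic functions of $\delta$: $\omega=\sqrt3\,\delta+O(\delta^5)$ and $\beta=\pi/3+O(\delta^2)$. With $d\delta/d\alpha=\tfrac13\alpha^{-2/3}$, this gives $\omega'=O(\alpha^{-2/3})$ and $\beta'=O(\delta\cdot\alpha^{-2/3})=O(\alpha^{-1/3})$. Away from $0$ the derivatives are bounded by smoothness on the compact set, since $3-2\alpha\zeta_\alpha\neq0$ and $3\zeta_\alpha^2\ne1$ below coalescence.

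The main obstacle is bookkeeping rather than conception. First, the branch of $\log\mathfrak r$ must be fixed consistently with $0<\omega<\pi$. Second, the $\delta^5$ coefficient requires some care, which the derivative trick above reduces to reusing \eqref{eq:psi-small-alpha-complex}. Third, the uniformity near $\alpha_0$ relies on nondegeneracy below $\alpha_c$, supplied by Lemma~\ref{lem:saddle-pair}.
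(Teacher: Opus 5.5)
Your proposal is correct and follows essentially the same route as the paper. The common steps are the envelope-theorem identity $\partial_x\mathcal S=\psi_\alpha(\zeta_\alpha)-\alpha\zeta_\alpha^2=\log\mathfrak r(\alpha)$, the small-$\alpha$ expansion obtained from $\log\mathfrak r=g-\alpha g'$ by termwise differentiation of \eqref{eq:psi-small-alpha-complex}, and the comparability and derivative bounds from the expansions in $\delta$ near $0$ combined with continuity and positivity on $[\epsilon,\alpha_0]$. Your extra cross-check through the series for $\log\frac{\zeta-1}{\zeta+1}$, including the vanishing $\delta^3$ coefficient, is correct but not needed.
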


\begin{proof}
Put $\nu=\lambda/x$. Since $\zeta_\nu$ is stationary for $\psi_\nu$, $\frac{d}{d\nu}\psi_\nu(\zeta_\nu)=\zeta_\nu^2$. Therefore
\begin{align*}
 \frac{\partial}{\partial x}\mathcal S(x;\lambda)
 &=\psi_\nu(\zeta_\nu)
   -\nu\zeta_\nu^2
 =\log\frac{\zeta_\nu-1}{\zeta_\nu+1},
\end{align*}
which proves \eqref{eq:tail-action-derivative-complex} and \eqref{eq:tail-action-derivative-real}.

To obtain \eqref{eq:logR-small-alpha}, write $g(\alpha)=\psi_\alpha(\zeta_\alpha)$.  From \eqref{eq:psi-small-alpha-complex},
\[
 g(\alpha)
 =\left(-\frac32+\frac{3\sqrt3}{2}i\right)\alpha^{1/3}
  +\frac23\alpha
  -\frac{1+i\sqrt3}{30}\alpha^{5/3}
  +O(\alpha^{7/3}).
\]
The identity just proved may also be written as \(\log\mathfrak r(\alpha)=g(\alpha)-\alpha g'(\alpha)\). Since \(g\) has a convergent local expansion in \(\alpha^{1/3}\), termwise differentiation yields \eqref{eq:logR-small-alpha}. Since \(\mathfrak r(\alpha)\) remains in the open upper half-plane for \(0<\alpha<\alpha_c\), \(\sin\omega(\alpha)>0\). Near zero, \eqref{eq:logR-small-alpha} gives
\[
 -\log\rho(\alpha)=\alpha^{1/3}+O(\alpha^{5/3}),
 \qquad
 \omega(\alpha)=\sqrt3\,\alpha^{1/3}+O(\alpha^{5/3}).
\]
Thus the two quantities in \eqref{eq:rho-comparable}--\eqref{eq:omega-comparable} are both of order $\alpha^{1/3}$ there; continuity on every interval $[\varepsilon,\alpha_0]$ supplies the uniform constants away from zero.  The same argument applied to \eqref{eq:B-beta-small-alpha} gives \eqref{eq:B-comparable}.

For the derivative bounds, differentiating the displayed expansion of $\omega$ gives $\omega'(\alpha)=O(\alpha^{-2/3})$.  Likewise, $\alpha^{-1/6}C(\alpha)$ is analytic in $\alpha^{2/3}$ near zero and has nonzero limit $4e^{i\pi/3}/\sqrt{3\pi}$, so its argument satisfies $\beta(\alpha)=\pi/3+O(\alpha^{2/3})$ and hence $\beta'(\alpha)=O(\alpha^{-1/3})$.  On $[\varepsilon,\alpha_0]$ both derivatives are bounded by smoothness.  This proves \eqref{eq:phase-derivative-bounds}.
\end{proof}

The strict negativity in \eqref{eq:tail-action-derivative-real} is the basic monotonicity needed below: the saddle envelope decreases with the Chebyshev index once the target parameter $\lambda$ has been fixed.  Integrating \eqref{eq:rho-comparable} gives the useful estimate
\begin{equation}\label{eq:action-decay-integrated}
 \mathcal A(k;\lambda)-\mathcal A(m;\lambda)
 \le -c\lambda^{1/3}\left(k^{2/3}-m^{2/3}\right),
 \qquad k\ge m,
\end{equation}
whenever $\lambda/m\le\alpha_0$, with $c>0$ depending only on $\alpha_0$.

\subsection{A short phase block and removal of coefficient cancellation}
\label{subsec:phase-block}

We next show that the oscillatory factor in a single coefficient cannot suppress an entire short block.  The block length is dictated by two competing requirements.  By \eqref{eq:omega-comparable}, the phase advances by about $\omega(\alpha_m)\asymp\delta_m$ per index, so roughly $\delta_m^{-1}$ consecutive modes are needed to sweep an $O(1)$ phase interval.  Over exactly that many indices, \eqref{eq:tail-action-derivative-real} changes the real action by only $O(1)$, so the coefficient envelopes remain comparable.  Thus
\[
 J_m\asymp\delta_m^{-1}=(m/\lambda_m)^{1/3}
\]
is simultaneously long enough to defeat cancellation and short enough not to lose the saddle scale.  This is the point at which the coefficient envelope becomes a genuine approximation-error scale.

For a sequence $\lambda_m>0$, define
\begin{equation}\label{eq:alpha-delta-Lambda-m-error}
 \alpha_m:=\frac{\lambda_m}{m},
 \qquad
 \delta_m:=\alpha_m^{1/3},
 \qquad
 \Lambda_m:=m\delta_m=(\lambda_m m^2)^{1/3}.
\end{equation}
For $k\ge m$, let
\begin{align}
 P(k;\lambda_m)
 &:=k^{-1/2}B(\lambda_m/k),
 \label{eq:tail-prefactor}\\
 \vartheta(k;\lambda_m)
 &:=\Ima\mathcal S(k;\lambda_m)
   +\beta(\lambda_m/k).
 \label{eq:tail-phase}
\end{align}
Then Proposition~\ref{prop:oscillatory-master}, used with coefficient index $k$ and parameter $\lambda_m$, gives
\begin{equation}\label{eq:pairwise-tail-coeff}
 a_k(\lambda_m)
 =(-1)^k P(k;\lambda_m)
   \ee^{\mathcal A(k;\lambda_m)}
 \left[
  \cos\vartheta(k;\lambda_m)
  +O\!\left((\lambda_m k^2)^{-1/3}\right)
 \right]
\end{equation}
uniformly whenever $0<\lambda_m/k\le\alpha_0<\alpha_c$ and $(\lambda_m k^2)^{1/3}$ is large.

\begin{lemma}[Noncancelling phase block]\label{lem:phase-block}
Fix $0<\alpha_0<\alpha_c$.  There exist constants $K,\,c,\,C>0$, depending only on $\alpha_0$, with the following property.  Suppose
\begin{equation}\label{eq:general-tail-sequence-assumption}
 0<\alpha_m=\frac{\lambda_m}{m}\le\alpha_0,
 \qquad
 \Lambda_m=(\lambda_m m^2)^{1/3}\longrightarrow\infty.
\end{equation}
Set
\begin{equation}\label{eq:Jm-def}
 J_m:=\left\lceil{K}/{\delta_m}\right\rceil.
\end{equation}
Then $J_m=o(m)$ and, for all sufficiently large $m$,
\begin{equation}\label{eq:phase-block-cos-square}
 \sum_{j=1}^{J_m}
 \cos^2\vartheta(m+j;\lambda_m)
 \ge cJ_m.
\end{equation}
Moreover, uniformly for $1\le j\le J_m$,
\begin{equation} \label{eq:block-envelope-comparable}
 \frac{\lambda_m^{1/6}
 \ee^{\mathcal A(m;\lambda_m)}}{Cm^{2/3}}
 \le
 P(m+j;\lambda_m)
 \ee^{\mathcal A(m+j;\lambda_m)}
 \le
 \frac{C\lambda_m^{1/6}
 \ee^{\mathcal A(m;\lambda_m)}}{m^{2/3}}.
\end{equation}
Consequently,
\begin{equation}\label{eq:block-square-lower}
 \sum_{j=1}^{J_m}|a_{m+j}(\lambda_m)|^2
 \ge c\,m^{-1}
 \ee^{2\mathcal A(m;\lambda_m)}.
\end{equation}
\end{lemma}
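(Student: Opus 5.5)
We treat the block as a short window in which the saddle amplitude is nearly constant while the phase sweeps an $O(1)$ arc. We first check $J_m=o(m)$, then prove the envelope comparability, then the phase-sweep bound, and finally combine them with the coefficient formula \eqref{eq:pairwise-tail-coeff}.

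Since $\delta_m^{-1}=m/\Lambda_m$ and $\Lambda_m\to\infty$, we get $J_m\le K m/\Lambda_m+1=o(m)$. Hence $k=m+j$ satisfies $k/m\to1$ uniformly for $j\le J_m$. Then $\lambda_m/k\le\alpha_0$, $\delta_k:=(\lambda_m/k)^{1/3}\asymp\delta_m$, and $(\lambda_mk^2)^{1/3}\asymp\Lambda_m\to\infty$. So \eqref{eq:pairwise-tail-coeff} applies uniformly on the block.

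For \eqref{eq:block-envelope-comparable}, we use \eqref{eq:B-comparable}:
\[
P(k;\lambda_m)\asymp k^{-1/2}(\lambda_m/k)^{1/6}\asymp\lambda_m^{1/6}m^{-2/3}.
\]
For the action, \eqref{eq:tail-action-derivative-real} and \eqref{eq:rho-comparable} give
\[
0\le\mathcal A(m;\lambda_m)-\mathcal A(m+j;\lambda_m)\le C_1\int_m^{m+j}(\lambda_m/x)^{1/3}\,dx\le C_1 J_m\delta_m\le C_1(K+\delta_m).
\]
Since $\delta_m\le\alpha_0^{1/3}$, this is $O(1)$ once $K$ is fixed. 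This yields \eqref{eq:block-envelope-comparable}.

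The main step is \eqref{eq:phase-block-cos-square}. By \eqref{eq:tail-action-derivative-complex}, $\partial_x\Ima\mathcal S(x;\lambda_m)=\omega(\lambda_m/x)$. Together with \eqref{eq:phase-derivative-bounds}, the phase increment is
\[
\vartheta(k+1)-\vartheta(k)=\omega(\lambda_m/k)+O\bigl(|\beta'|\,\lambda_m/k^2\bigr).
\]
The correction is $O(\delta_m^{-1}\alpha_m/m)=O(\delta_m^2/m)$. Moreover $\omega(\lambda_m/x)$ varies on the block by at most $|\omega'|\cdot\alpha_m J_m/m\lesssim\delta_m^{-2}\delta_m^3\cdot\delta_m^{-1}/m=O(1/m)$. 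So the increments equal a fixed step $h_m:=\omega(\alpha_m)$, with $h_m\asymp\delta_m$ and $h_m\in(0,\pi)$, up to $O(1/m)$ per step. Over $J_m$ steps the accumulated drift is $O(J_m/m)=o(1)$. Thus $\vartheta(m+j)=\vartheta_0+jh_m+o(1)$ uniformly. We then use $\cos^2=\tfrac12(1+\cos 2\vartheta)$ and the geometric-sum bound
\[
\Bigl|\sum_{j=1}^J\cos(2\vartheta_0+2jh)\Bigr|\le\frac{1}{|\sin h|}.
\]
This reduces the claim to $J_m|\sin h_m|\ge 4$, say. By \eqref{eq:omega-comparable}, $\sin h_m\ge c_2\delta_m$, so it suffices to choose $K\ge 4/c_2$. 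This gives $\sum\cos^2\ge J_m/2-1/(c_2\delta_m)-o(J_m)\ge J_m/4$. The subtle point is that $h_m$ can be close to $\pi$ when $\alpha_m$ is not small. In that case we use $|\sin h_m|$ rather than $h_m$, which \eqref{eq:omega-comparable} controls uniformly. We also need the $o(1)$ phase perturbation, which changes each $\cos^2$ by $o(1)$.

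Finally, we square \eqref{eq:pairwise-tail-coeff} and use $(a+b)^2\ge a^2/2-b^2$ with $b=O(\Lambda_m^{-1})$. Combined with \eqref{eq:block-envelope-comparable}, this gives
\[
\sum_{j=1}^{J_m}|a_{m+j}|^2\ge c\,\lambda_m^{1/3}m^{-4/3}e^{2\mathcal A(m;\lambda_m)}\bigl(cJ_m-O(J_m\Lambda_m^{-2})\bigr).
\]
Since $J_m\ge K\delta_m^{-1}$ and $\lambda_m^{1/3}m^{-4/3}\delta_m^{-1}=m^{-1}$, this proves \eqref{eq:block-square-lower}.
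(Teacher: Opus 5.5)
Your proof is correct and follows the paper's argument essentially step for step: envelope comparability from $B(\alpha)\asymp\alpha^{1/6}$ and $-\log\rho(\alpha)\asymp\alpha^{1/3}$, then linearization of $\vartheta(m+j;\lambda_m)$ to an arithmetic progression with step $\omega(\alpha_m)$ and accumulated drift $O(J_m/m)=O(\Lambda_m^{-1})$, then the bound $\sin\omega(\alpha_m)\ge c_2\delta_m$ to control the oscillatory part of $\sum\cos^2$, and finally $(a+b)^2\ge a^2/2-b^2$ combined with $\lambda_m^{1/3}m^{-4/3}\delta_m^{-1}=m^{-1}$. (One harmless slip: the cosine sum carries a factor $\tfrac12$, so with $K\ge 4/c_2$ the bound is $J_m/2-1/(2c_2\delta_m)-o(J_m)\ge 3J_m/8-o(J_m)$; your version with $-1/(c_2\delta_m)$ yields only $J_m/4-o(J_m)$ rather than $\ge J_m/4$, which still suffices for some $c>0$.)
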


\begin{proof}
Since $J_m/m=O(\Lambda_m^{-1})$, the block is $o(m)$.  On this block, $\lambda_m/(m+j)=\alpha_m(1+O(\Lambda_m^{-1}))$.  From \eqref{eq:B-comparable}, $P(m+j;\lambda_m)\asymp \lambda_m^{1/6}m^{-2/3}$ uniformly. Also, \eqref{eq:rho-comparable} gives $|\mathcal A(m+j;\lambda_m)-\mathcal A(m;\lambda_m)|\le C\delta_mj\le CK$, proving \eqref{eq:block-envelope-comparable}. Let $\omega_m:=\omega(\alpha_m)$.
Differentiating \eqref{eq:tail-phase} and using Lemma~\ref{lem:tail-action-derivative} gives
\[
 \frac{\partial}{\partial x}\vartheta(x;\lambda_m)
 =\omega(\lambda_m/x)
  -\frac{\lambda_m}{x^2}\,
   \beta'(\lambda_m/x).
\]
The second term is $O(\delta_m^2/m)$ near $x=m$. By \eqref{eq:phase-derivative-bounds}, for $m\le x\le m+J_m$ we have $|\omega(\lambda_m/x)-\omega_m|\le C\delta_m|x-m|/m$. Integrating this variation over an interval of length at most $J_m$, and treating the $\beta$ term similarly, gives $O(\delta_mJ_m^2/m+\delta_m^2J_m/m)=O(\Lambda_m^{-1})$. Hence, uniformly for $1\le j\le J_m$,
\begin{equation}\label{eq:block-phase-linearization}
 \vartheta(m+j;\lambda_m)
 =\vartheta(m;\lambda_m)+j\omega_m+O(\Lambda_m^{-1}).
\end{equation}

For an exact arithmetic progression of phases,
\begin{equation}\label{eq:cos-square-sum-identity}
 \sum_{j=1}^{J}\cos^2(\theta+j\omega)
 =\frac{J}{2}
 +\frac{\sin(J\omega)
 \cos(2\theta+(J+1)\omega)}{2\sin\omega}.
\end{equation}
By \eqref{eq:omega-comparable}, $\sin\omega_m\ge c_0\delta_m$.  Choose $K$ in \eqref{eq:Jm-def} so large that $J_m\ge4/(c_0\delta_m)$. Then the right-hand side of \eqref{eq:cos-square-sum-identity} is at least $3J_m/8$.  Since $|\cos^2(x+e)-\cos^2x|\le |e|$, the perturbation in \eqref{eq:block-phase-linearization} changes the sum by at most $O(J_m/\Lambda_m)=o(J_m)$, proving \eqref{eq:phase-block-cos-square}.

Finally, the bracketed error in \eqref{eq:pairwise-tail-coeff} is \(O(\Lambda_m^{-1})\) throughout the block. Using \((a+b)^2\ge a^2/2-b^2\), summing, and combining \eqref{eq:phase-block-cos-square} with \eqref{eq:block-envelope-comparable} gives
\begin{align*}
 \sum_{j=1}^{J_m}|a_{m+j}(\lambda_m)|^2
 &\ge cJ_m\lambda_m^{1/3}m^{-4/3}
   \ee^{2\mathcal A(m;\lambda_m)}.
\end{align*}
Since $J_m\asymp\delta_m^{-1}=(m/\lambda_m)^{1/3}$, this is exactly \eqref{eq:block-square-lower}.
\end{proof}

\subsection{From coefficient tails to approximation errors}
\label{subsec:tail-summation}
\label{subsec:error-main}

The phase-block lemma supplies the lower bound. For the upper bound, the monotone saddle action allows the entire infinite tail to be summed without coefficientwise monotonicity. The relevant width is again $\delta_m^{-1}$: for $k=m+j$ with $j=o(m)$, $k^{2/3}-m^{2/3}\sim\frac23m^{-1/3}j$, so \eqref{eq:action-decay-integrated} gives an envelope $\exp(-c\delta_mj)$. The integral comparison below is the global version of this picture.

\begin{lemma}[Tail summation bounds]\label{lem:tail-summation}
Under \eqref{eq:general-tail-sequence-assumption}, there is a constant $C>0$, depending only on $\alpha_0$, such that, for all sufficiently large $m$,
\begin{align}
 \sum_{k=m+1}^\infty |a_k(\lambda_m)|
 &\le C\lambda_m^{-1/6}m^{-1/3}
 \ee^{\mathcal A(m;\lambda_m)},
 \label{eq:l1-tail-bound}\\
 \sum_{k=m+1}^\infty |a_k(\lambda_m)|^2
 &\le C m^{-1}
 \ee^{2\mathcal A(m;\lambda_m)}.
 \label{eq:l2-tail-bound}
\end{align}
\end{lemma}

\begin{proof}
The uniform coefficient formula \eqref{eq:pairwise-tail-coeff}, together with \eqref{eq:B-comparable}, gives
\begin{equation}\label{eq:pointwise-envelope-upper}
 |a_k(\lambda_m)|
 \le C\lambda_m^{1/6}k^{-2/3}
 \ee^{\mathcal A(k;\lambda_m)},
 \qquad k\ge m+1,
\end{equation}
for all sufficiently large $m$.  Here the large-parameter condition is uniform because $(\lambda_m k^2)^{1/3}\ge\Lambda_m\to\infty$. By \eqref{eq:action-decay-integrated},
\begin{equation}\label{eq:action-envelope-from-m}
 \ee^{\mathcal A(k;\lambda_m)}
 \le
 \ee^{\mathcal A(m;\lambda_m)}
 \exp\!\left[-c\lambda_m^{1/3}
 \left(k^{2/3}-m^{2/3}\right)\right].
\end{equation}

Using integral comparison and the substitution $u=x^{2/3}$ gives
\begin{align*}
 &\int_m^\infty x^{-2/3}
 \exp\!\left[-c\lambda_m^{1/3}
 (x^{2/3}-m^{2/3})\right]dx \\
 &\qquad=\frac32\int_{m^{2/3}}^\infty
 u^{-1/2}\exp\!\left[-c\lambda_m^{1/3}(u-m^{2/3})\right]du
 \le C m^{-1/3}\lambda_m^{-1/3}.
\end{align*}
The first discrete term has the same order because $(\lambda_m/m)^{1/3}\le\alpha_0^{1/3}$.  Hence
\begin{align*}
 &\sum_{k=m+1}^\infty k^{-2/3}
 \exp\!\left[-c\lambda_m^{1/3}
 (k^{2/3}-m^{2/3})\right]\le
 C m^{-1/3}\lambda_m^{-1/3}.
\end{align*}
Multiplying by the prefactor $\lambda_m^{1/6}$ in \eqref{eq:pointwise-envelope-upper} proves \eqref{eq:l1-tail-bound}. The same substitution, now with the weight $x^{-4/3}$, gives
\begin{align*}
 &\sum_{k=m+1}^\infty k^{-4/3}
 \exp\!\left[-2c\lambda_m^{1/3}
 (k^{2/3}-m^{2/3})\right]\le
 C m^{-1}\lambda_m^{-1/3}.
\end{align*}
After multiplication by $\lambda_m^{1/3}$, this proves \eqref{eq:l2-tail-bound}.
\end{proof}

The phase block and the tail bounds now meet. Parseval converts the noncancelling block into a weighted-$L^2$ lower bound, while the squared-tail estimate gives the matching upper bound. The elementary inequality $\|g\|_{2,C}\le\sqrt\pi\|g\|_\infty$ then transfers the lower bound to best uniform approximation, and the absolute-tail estimate bounds the Chebyshev projection from above. The weighted norm retains the sharp algebraic scale; the only remaining algebraic gap comes from summing the uniform-norm tail without using cancellation.

The theorem is therefore intentionally sharper in the weighted $L^2$ norm than in the uniform norm.  Under the mild additional condition $\Lambda_m/\log m\to\infty$, the remaining algebraic factors are negligible relative to the saddle action; this holds for all three pole scalings below.

\begin{theorem}[Uniform approximation-error envelope]\label{thm:approx-error-envelope}
Fix $0<\alpha_0<\alpha_c$, let $\lambda_m>0$ satisfy \eqref{eq:general-tail-sequence-assumption}, and let $R_m(\lambda_m)=F_{\lambda_m}-\Pi_mF_{\lambda_m}$ denote the Chebyshev projection remainder. Then there exist constants $c,\,C>0$, depending only on $\alpha_0$, such that
\begin{equation}\label{eq:L2-error-two-sided}
 c\,m^{-1/2}
 H_e(\alpha_m)^m
 \le
 \|R_m(\lambda_m)\|_{2,C}
 \le
 C\,m^{-1/2}
 H_e(\alpha_m)^m
\end{equation}
for all sufficiently large $m$.  Furthermore,
\begin{equation}\label{eq:uniform-error-two-sided}
 c\,m^{-1/2}H_e(\alpha_m)^m
 \le E_m(\lambda_m)
 \le \|R_m(\lambda_m)\|_\infty
 \le C\lambda_m^{-1/6}m^{-1/3}H_e(\alpha_m)^m.
\end{equation}
If, in addition,
\begin{equation}\label{eq:log-equivalence-condition}
 \frac{\Lambda_m}{\log m}\longrightarrow\infty,
\end{equation}
then the logarithms of the weighted projection, uniform projection, and best uniform approximation errors are all asymptotic to the saddle action:
\begin{equation}\label{eq:log-equivalence-errors}
 \log\|R_m(\lambda_m)\|_{2,C}
 \sim
 \log\|R_m(\lambda_m)\|_\infty
 \sim
 \log E_m(\lambda_m)
 \sim
 \mathcal A(m;\lambda_m).
\end{equation}
\end{theorem}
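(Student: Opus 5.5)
The plan is to combine Lemma~\ref{lem:phase-block} and Lemma~\ref{lem:tail-summation} through Parseval, and then rewrite the action. First, the definitions give $\mathcal A(m;\lambda_m)=m\Rea\psi_{\alpha_m}(\zeta_{\alpha_m})=m\log H_e(\alpha_m)$. Hence $\ee^{\mathcal A(m;\lambda_m)}=H_e(\alpha_m)^m$ exactly.

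\textbf{The $L^2$ bounds \eqref{eq:L2-error-two-sided}.} By Parseval \eqref{eq:parseval-tail}, $\|R_m\|_{2,C}^2$ equals $\frac{\pi}{2}\sum_{k>m}|a_k|^2$. This sum is at least the block sum in \eqref{eq:block-square-lower}, which is $\ge c\,m^{-1}H_e(\alpha_m)^{2m}$. It is at most the bound \eqref{eq:l2-tail-bound}, which is $\le Cm^{-1}H_e(\alpha_m)^{2m}$. Taking square roots gives both sides.

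\textbf{The uniform-norm chain \eqref{eq:uniform-error-two-sided}.}
\begin{itemize}
\item For the lower bound, let $p^*$ be a best uniform approximant in $\Pp_m$. The orthogonal projection $\Pi_m$ minimizes the weighted norm over $\Pp_m$. Therefore
\[
\|R_m\|_{2,C}\le\|F_{\lambda_m}-p^*\|_{2,C}\le\sqrt\pi\,\|F_{\lambda_m}-p^*\|_\infty=\sqrt\pi\,E_m(\lambda_m).
\]
Combining this with the $L^2$ lower bound gives $E_m\ge c\,m^{-1/2}H_e^m$.
\item The middle inequality $E_m\le\|R_m\|_\infty$ is trivial.
\item For the upper bound, \eqref{eq:sup-tail-upper-basic} combined with \eqref{eq:l1-tail-bound} gives $\|R_m\|_\infty\le C\lambda_m^{-1/6}m^{-1/3}H_e^m$.
\end{itemize}

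\textbf{Logarithmic equivalence \eqref{eq:log-equivalence-errors}.} All three errors lie between $c\,m^{-1/2}H_e^m$ and $C\lambda_m^{-1/6}m^{-1/3}H_e^m$. The weighted error is also bounded above by $\sqrt\pi\,\|R_m\|_\infty$. So each log-error equals $\mathcal A(m;\lambda_m)+O(\log m+|\log\lambda_m|)$.

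Two estimates are needed to absorb this remainder.
\begin{itemize}
\item We have $\lambda_m\le\alpha_0 m$. Also $\lambda_m=\Lambda_m^3/m^2\ge m^{-2}$ once $\Lambda_m\ge1$. Together these give $|\log\lambda_m|=O(\log m)$.
\item By \eqref{eq:rho-comparable}, and because $\mathcal A(x;\lambda)\to0$ as $x\to0^+$ in the relevant sense, integrating $\log\rho$ gives $-\mathcal A(m;\lambda_m)\asymp\lambda_m^{1/3}m^{2/3}=\Lambda_m$. More simply, $\log H_e(\alpha)\asymp-\alpha^{1/3}$ uniformly on $(0,\alpha_0]$, by Lemma~\ref{lem:small-alpha-action} and continuity with $H_e<1$.
\end{itemize}
Consequently $|\mathcal A|\asymp\Lambda_m$. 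Under \eqref{eq:log-equivalence-condition}, the $O(\log m)$ error is then $o(|\mathcal A|)$.

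\textbf{Main obstacle.} The step needing most care is the uniform comparability $-\log H_e(\alpha)\asymp\alpha^{1/3}$ on $(0,\alpha_0]$, which is what makes the log-equivalence work. I would obtain it from the expansion \eqref{eq:He-small-alpha} near $0$ and from compactness together with Proposition~\ref{prop:He-derivatives} away from $0$. The rest is bookkeeping of the earlier lemmas.
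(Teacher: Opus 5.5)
Your proposal is correct and follows the paper's proof essentially step by step. The weighted bounds come from Parseval with the phase-block and squared-tail lemmas. The best-error lower bound comes from orthogonality together with $\|g\|_{2,C}\le\sqrt\pi\|g\|_\infty$, and the projection upper bound from the absolute tail sum. The logarithmic equivalence comes from $-\log H_e(\alpha)\asymp\alpha^{1/3}$ together with $|\log\lambda_m|=O(\log m)$.

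Drop the parenthetical route via ``$\mathcal A(x;\lambda)\to0$ as $x\to0^+$''. It is not meaningful, since $\mathcal A(x;\lambda)$ is only defined for $x>\lambda/\alpha_c$ and never tends to zero. The ``more simply'' argument you give instead is exactly the one the paper uses, and it suffices.
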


\begin{proof}
By \eqref{eq:parseval-tail}, Lemma~\ref{lem:phase-block} gives
\[
 \|R_m(\lambda_m)\|_{2,C}^2
 \ge \frac{\pi}{2}
 \sum_{j=1}^{J_m}|a_{m+j}(\lambda_m)|^2
 \ge c m^{-1}\ee^{2\mathcal A(m;\lambda_m)}.
\]
The upper bound in \eqref{eq:L2-error-two-sided} follows from \eqref{eq:parseval-tail} and \eqref{eq:l2-tail-bound}.  Since $\mathcal A(m;\lambda_m)=m\log H_e(\lambda_m/m)=m\log H_e(\alpha_m)$, this proves \eqref{eq:L2-error-two-sided}.

For any $p\in\Pp_m$, orthogonality gives $\|F_{\lambda_m}-p\|_{2,C}\ge\|F_{\lambda_m}-\Pi_mF_{\lambda_m}\|_{2,C}$, while $\|g\|_{2,C}\le\sqrt\pi\|g\|_\infty$. Taking the infimum over $p$ therefore yields $E_m(\lambda_m)\ge\pi^{-1/2}\|F_{\lambda_m}-\Pi_mF_{\lambda_m}\|_{2,C}$, the first inequality in \eqref{eq:uniform-error-two-sided}. The middle inequality is immediate from the definition of $E_m$.  Finally, \eqref{eq:sup-tail-upper-basic} and Lemma~\ref{lem:tail-summation} give the last inequality.

It remains to justify the logarithmic conclusion under \eqref{eq:log-equivalence-condition}. By Lemma~\ref{lem:small-alpha-action} and continuity on $[0,\alpha_0]$, $-\log H_e(\alpha)/\alpha^{1/3}$ extends continuously to $3/2$ at $\alpha=0$ and is bounded above and below by positive constants. Hence $-\mathcal A(m;\lambda_m)=-m\log H_e(\alpha_m)\asymp m\alpha_m^{1/3}=\Lambda_m$.
Moreover, since $\Lambda_m\to\infty$ and $\alpha_m\le\alpha_0$, eventually $m^{-2}\le\lambda_m=\Lambda_m^3/m^2\le\alpha_0m$, hence $|\log\lambda_m|=O(\log m)$.  Taking logarithms in \eqref{eq:L2-error-two-sided}--\eqref{eq:uniform-error-two-sided} therefore changes $\mathcal A(m;\lambda_m)$ by at most $O(\log m)$.  Under \eqref{eq:log-equivalence-condition} this is $o(\Lambda_m)=o(|\mathcal A(m;\lambda_m)|)$, which proves \eqref{eq:log-equivalence-errors}.
\end{proof}

\begin{remark}[The origin of the $m^{-1/2}$ weighted scale]\label{rem:L2-prefactor-mechanism}
The two-sided power $m^{-1/2}$ in \eqref{eq:L2-error-two-sided} has a simple structural origin. Near index $m$, one coefficient and a noncancelling block have scales
\[
 \lambda_m^{1/6}m^{-2/3}\ee^{\mathcal A(m;\lambda_m)},
 \qquad J_m\asymp(m/\lambda_m)^{1/3},
\]
respectively. Hence $\lambda_m^{1/6}m^{-2/3}J_m^{1/2}\asymp m^{-1/2}$. The same weighted-$L^2$ scale persists from fixed to linearly moving poles although the effective tail width changes from order $m^{1/3}$ to order one.
\end{remark}

\begin{remark}[The remaining uniform-norm gap]\label{rem:uniform-prefactor-scope}
For fixed $\lambda$, the uniform-norm bounds in \eqref{eq:uniform-error-two-sided} have prefactors $m^{-1/2}$ and $m^{-1/3}$. Closing this gap requires a sharper analysis of the discrete oscillatory tail. It does not affect the logarithmic laws; for $\lambda_m\asymp m$, the two powers already coincide.
\end{remark}

The coefficient-envelope transition now transfers directly to the approximation errors.

\begin{corollary}[Approximation-error transition]\label{cor:error-transition}
The following limits hold for each of the three errors $D_m\in\{E_m(\lambda_m),\|R_m(\lambda_m)\|_\infty,\|R_m(\lambda_m)\|_{2,C}\}$.
\begin{enumerate}
\item[(i)] If
\begin{equation}\label{eq:error-sublinear-assumption}
 \lambda_m\sim c m^\gamma,
 \qquad c>0,\qquad 0\le\gamma<1,
\end{equation}
then
\begin{equation}\label{eq:error-sublinear-log-rate}
 \lim_{m\to\infty}\frac{\log D_m}{m^{(2+\gamma)/3}}=-\frac32c^{1/3}.
\end{equation}

For fixed $\lambda>0$, the common logarithmic scale is $-\tfrac32(\lambda m^2)^{1/3}$.

\item[(ii)] If
\begin{equation}\label{eq:error-linear-assumption}
 \frac{\lambda_m}{m}\longrightarrow\alpha
 \in(0,\alpha_c),
\end{equation}
then
\begin{equation}\label{eq:error-linear-root-rate}
 \lim_{m\to\infty}D_m^{1/m}=H_e(\alpha).
\end{equation}

If $\lambda_m=\alpha m$ exactly, then all three errors are bounded above and below by positive constants times
\begin{equation}\label{eq:linear-error-order}
 m^{-1/2}H_e(\alpha)^m.
\end{equation}
\end{enumerate}
\end{corollary}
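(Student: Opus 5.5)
The plan is to derive every part of the corollary from Theorem~\ref{thm:approx-error-envelope}, checking in each regime that its hypotheses \eqref{eq:general-tail-sequence-assumption} and \eqref{eq:log-equivalence-condition} hold, and then reading off the action $\mathcal A(m;\lambda_m)=m\log H_e(\alpha_m)$ using Lemma~\ref{lem:small-alpha-action} and Theorem~\ref{thm:pole-scaling-transition}. No new analysis is needed. The only issue is the bookkeeping of the algebraic prefactors, which must stay negligible on the logarithmic scale.

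\emph{Sublinear regime (i).} Here $\lambda_m\sim cm^\gamma$ with $\gamma<1$, so $\alpha_m\to0$ and in particular $\alpha_m\le\alpha_0$ eventually, for any fixed $\alpha_0<\alpha_c$. Also $\Lambda_m=(\lambda_m m^2)^{1/3}\sim c^{1/3}m^{(2+\gamma)/3}$. Since $(2+\gamma)/3\ge 2/3$, we get $\Lambda_m/\log m\to\infty$, so \eqref{eq:log-equivalence-errors} applies to all three errors $D_m$. It gives $\log D_m\sim\mathcal A(m;\lambda_m)$. By \eqref{eq:sublinear-action-two-scale},
\[
\mathcal A(m;\lambda_m)=-\tfrac32\lambda_m^{1/3}m^{2/3}+O(\lambda_m),
\]
and $\lambda_m=O(m^\gamma)=o(m^{(2+\gamma)/3})$. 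Dividing by $m^{(2+\gamma)/3}$ yields $-\tfrac32c^{1/3}$. The fixed-$\lambda$ statement is the case $\gamma=0$, $c=\lambda$.

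\emph{Linear regime (ii).} Since $\alpha<\alpha_c$, choose $\alpha_0\in(\alpha,\alpha_c)$; then $\alpha_m\le\alpha_0$ eventually. We have $\Lambda_m\asymp m$, so both hypotheses of the theorem hold. From \eqref{eq:L2-error-two-sided}--\eqref{eq:uniform-error-two-sided}, each $D_m$ lies between $c\,m^{-1/2}H_e(\alpha_m)^m$ and $C\max(m^{-1/2},\lambda_m^{-1/6}m^{-1/3})H_e(\alpha_m)^m$. Because $\lambda_m\asymp m$, the factor $\lambda_m^{-1/6}m^{-1/3}$ is $\asymp m^{-1/2}$. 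Taking $m$th roots, the algebraic factors tend to $1$. Continuity of $H_e$ on $(0,\alpha_c)$, which follows from smooth dependence of the simple root $\zeta_\alpha$, gives $H_e(\alpha_m)\to H_e(\alpha)$. This proves \eqref{eq:error-linear-root-rate}. If $\lambda_m=\alpha m$ exactly, then $\alpha_m=\alpha$, and the same two-sided bounds read $c\,m^{-1/2}H_e(\alpha)^m\le D_m\le C'm^{-1/2}H_e(\alpha)^m$, which is \eqref{eq:linear-error-order}.

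\emph{Main obstacle.} The only delicate point is checking that the $O(\log m)$ prefactor losses and the $O(\lambda_m)$ second-order term in the action are genuinely of lower order in the sublinear case. This matters most near $\gamma\to1$, and it is covered by the strict inequality $\gamma<1$. One must also confirm that the asymptotic relation $\lambda_m\sim cm^\gamma$, rather than equality, is enough to pass to the limit, because $\lambda_m^{1/3}/m^{\gamma/3}\to c^{1/3}$.
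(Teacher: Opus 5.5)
Your proposal is correct and follows the paper's argument. In both, each regime is read off from the two-sided bounds of Theorem~\ref{thm:approx-error-envelope}. In case (i) the action comes from \eqref{eq:sublinear-action-two-scale}; in case (ii) you use continuity of $H_e$ together with $\lambda_m^{-1/6}m^{-1/3}\asymp m^{-1/2}$.

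The only difference is cosmetic. In (i) you invoke the packaged log-equivalence \eqref{eq:log-equivalence-errors} after checking $\Lambda_m/\log m\to\infty$. The paper instead notes directly that the algebraic prefactors contribute only $O(\log m)$, which is exactly how that log-equivalence is proved.
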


\begin{proof}
Under \eqref{eq:error-sublinear-assumption}, Theorem~\ref{thm:pole-scaling-transition} gives $m\log H_e(\lambda_m/m)=-\frac32c^{1/3}m^{(2+\gamma)/3}+o(m^{(2+\gamma)/3})$. All algebraic factors in \eqref{eq:L2-error-two-sided}--\eqref{eq:uniform-error-two-sided} contribute only $O(\log m)$ to the logarithm, proving \eqref{eq:error-sublinear-log-rate}.  Under \eqref{eq:error-linear-assumption}, taking $m$th roots in the same bounds gives \eqref{eq:error-linear-root-rate}.  If $\lambda_m=\alpha m$, the last factor in \eqref{eq:uniform-error-two-sided} is $\lambda_m^{-1/6}m^{-1/3}=\alpha^{-1/6}m^{-1/2}$, so the upper and lower algebraic orders coincide.
\end{proof}

\subsection{An explicit oscillatory profile in the linear regime}
\label{subsec:l2-profile}

When the pole-to-degree ratio is fixed exactly, the relevant tail has bounded width rather than a growing width. Consequently, the first few omitted modes do not average out: their phase survives in the normalized weighted-$L^2$ error. The next result resolves this residual oscillation explicitly, sharpening the two-sided order in Theorem~\ref{thm:approx-error-envelope}.

\begin{proposition}[Oscillatory weighted-$L^2$ profile]\label{prop:l2-oscillatory-profile}
Let $K\Subset(0,\alpha_c)$ be compact. For $\alpha\in K$ set $\lambda_m=\alpha m$ and write
\begin{equation}\label{eq:linear-profile-phase}
 \varphi_m(\alpha):=m\Theta(\alpha)+\beta(\alpha),
 \qquad
 \mathfrak r(\alpha)=\rho(\alpha)\ee^{i\omega(\alpha)}
\end{equation}
as in \eqref{eq:R-rho-omega-def}. Define the positive, $\pi$-periodic profile
\begin{equation}\label{eq:profile-function-series}
 \mathcal P_\alpha(\varphi)
 :=\left[\textstyle 
 \sum_{j=1}^{\infty}\rho(\alpha)^{2j}
 \cos^2\!\bigl(\varphi+j\omega(\alpha)\bigr)
 \right]^{1/2}.
\end{equation}
Then, uniformly for $\alpha\in K$,
\begin{equation}\label{eq:L2-oscillatory-profile}
 \frac{\sqrt m}{H_e(\alpha)^m}
 \|R_m(\alpha m)\|_{2,C}
 =\sqrt{\frac{\pi}{2}}\,B(\alpha)
 \mathcal P_\alpha\!\bigl(\varphi_m(\alpha)\bigr)
 +O_K(m^{-1}).
\end{equation}
The profile has the closed form
\begin{align}\label{eq:profile-function-closed}
 \mathcal P_\alpha(\varphi)^2
 ={}&\frac{\rho(\alpha)^2}{2[1-\rho(\alpha)^2]}+\frac12\Rea\!\left
 \{\frac{\rho(\alpha)^2
 \exp\bigl(2i[\varphi+\omega(\alpha)]\bigr)}
 {1-\rho(\alpha)^2\exp(2i\omega(\alpha))}\right\}.
\end{align}
Moreover, on every such compact $K$, $\mathcal P_\alpha(\varphi)$ is bounded above and away from zero uniformly in $(\alpha,\varphi)\in K\times\R$. Thus \eqref{eq:L2-oscillatory-profile} refines the order $m^{-1/2}H_e(\alpha)^m$ through an explicit oscillatory profile.
\end{proposition}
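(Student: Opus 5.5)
The starting point is Parseval \eqref{eq:parseval-tail} with $\lambda_m=\alpha m$:
\[
\|R_m(\alpha m)\|_{2,C}^2=\tfrac{\pi}{2}\sum_{j\ge1}|a_{m+j}(\alpha m)|^2 .
\]
I would insert the pairwise tail formula \eqref{eq:pairwise-tail-coeff} for $k=m+j$ and linearize every ingredient in $j$ about $x=m$ with $\lambda=\alpha m$ held fixed. In the linear regime $\delta_m=\alpha^{1/3}$ is fixed, so the tail has bounded effective width. The plan is to split the sum into $1\le j\le J$ with $J=\lceil C_K\log m\rceil$ and the remainder $j>J$. On the remainder, \eqref{eq:action-decay-integrated} and Lemma~\ref{lem:tail-summation}, applied from index $m+J$, give a contribution $O(m^{-1}H_e^{2m}\rho_{\max}^{2J})=O(m^{-3}H_e^{2m})$ once $C_K$ is chosen large enough using $\sup_K\rho<1$. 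The corresponding tail of the profile series is of the same negligible size.

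For $j\le J$ I would Taylor expand around $x=m$.

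\emph{Action.} By \eqref{eq:tail-action-derivative-complex}, $\partial_x\mathcal S=\log\mathfrak r(\lambda/x)$, so
\[
\mathcal S(m+j)=\mathcal S(m)+j\log\mathfrak r(\alpha)+O(j^2/m),
\]
since $\partial_x^2\mathcal S=O(1/m)$ uniformly on $K$. Here $\mathcal S(m)=m\psi_\alpha(\zeta_\alpha)$, so $e^{\mathcal A(m+j)}=H_e^m\rho^j(1+O(j^2/m))$ and $\Ima\mathcal S(m+j)=m\Theta+j\omega+O(j^2/m)$.

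\emph{Prefactor and phase shift.} We have $P(m+j)=m^{-1/2}B(\alpha)(1+O(j/m))$ and $\beta(\lambda/(m+j))=\beta(\alpha)+O(j/m)$.

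\emph{Combined.} With the coefficient remainder $O(m^{-1})$ from \eqref{eq:pairwise-tail-coeff} (here $\Lambda\asymp m$), this gives
\[
|a_{m+j}|^2=m^{-1}B^2H_e^{2m}\rho^{2j}\Bigl[\cos^2(\varphi_m+j\omega)+O\bigl((1+j^2)/m\bigr)\Bigr].
\]
Summing over $j$, the errors total $O(m^{-2}H_e^{2m}\sum_j j^2\rho^{2j})=O(m^{-2}H_e^{2m})$ uniformly on $K$. Hence
\[
\frac{m}{H_e^{2m}}\|R_m\|_{2,C}^2=\tfrac{\pi}{2}B^2\mathcal P_\alpha(\varphi_m)^2+O(m^{-1}).
\]
Taking square roots requires the lower bound on $\mathcal P_\alpha$, proved below, and turns this into \eqref{eq:L2-oscillatory-profile} with an $O(m^{-1})$ error.

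The closed form \eqref{eq:profile-function-closed} follows from $\cos^2x=\tfrac12(1+\Rea e^{2ix})$ and summing the two geometric series $\sum_j\rho^{2j}=\rho^2/(1-\rho^2)$ and $\sum_j\rho^{2j}e^{2ij\omega}=\rho^2e^{2i\omega}/(1-\rho^2e^{2i\omega})$. Periodicity with period $\pi$ is evident. The upper bound is $\mathcal P^2\le\rho^2/(1-\rho^2)$.

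For the lower bound, suppose $\mathcal P_\alpha(\varphi)=0$. Then $\cos(\varphi+\omega)=0$ and $\cos(\varphi+2\omega)=0$, which forces $\omega\in\pi\mathbb Z$. This is excluded because $0<\omega<\pi$ by \eqref{eq:R-rho-omega-def}. More quantitatively, $\mathcal P^2\ge\rho^2\cos^2(\varphi+\omega)+\rho^4\cos^2(\varphi+2\omega)$, and two consecutive cosines cannot both be small when $\sin\omega\ge c>0$ on $K$. Continuity of $\rho,\omega$ and compactness of $K\times[0,\pi]$ then give a uniform positive lower bound.

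The main obstacle is bookkeeping the uniformity of the Taylor remainders. The needed inputs are that $\partial_x^2\mathcal S$, $\beta'$ and $B'$ are bounded on a neighbourhood of $K$, and that the pairwise formula's $O(\Lambda^{-1})$ error holds uniformly for indices $k\ge m$ with $\lambda/k\in[\tfrac12\min K,\max K]$. Both are guaranteed by Theorem~\ref{thm:uniform-coeff} and smoothness on compact subcritical sets.
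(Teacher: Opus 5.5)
Your proposal is correct and takes essentially the paper's route: Parseval, Taylor expansion of the tail action via $\partial_x\mathcal S=\log\mathfrak r(\lambda/x)$ together with the prefactor and the phase shift $\beta$, summation of geometric series, and nonvanishing of the profile from $0<\omega<\pi$. The only difference is bookkeeping. You cut the near tail at $j\approx C_K\log m$ and handle everything beyond it by reapplying Lemma~\ref{lem:tail-summation} from index $m+J$. The paper instead cuts at $m^{1/3}$ and treats $m^{1/3}<j\le\eta m$ and $k>(1+\eta)m$ separately with the action bounds.
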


\begin{proof}
Fix a compact interval $K\Subset(0,\alpha_c)$ and choose $\eta>0$ so small that
\[
 K_1:=\left\{\frac{\alpha}{1+s}:\alpha\in K,\ 0\le s\le\eta\right\}
 \Subset(0,\alpha_c).
\]
For $1\le j\le\eta m$, the coefficient index $k=m+j$ therefore has $\alpha m/k\in K_1$. The additive error in the brackets of \eqref{eq:pairwise-tail-coeff} is $O_K(m^{-1})$ on this range. In addition, Lemma~\ref{lem:tail-action-derivative} and smoothness on $K_1$ give
\[
 \mathcal S(m+j;\alpha m)
 =m\psi_\alpha(\zeta_\alpha)
  +j\log\mathfrak r(\alpha)+O_K(j^2/m),
\]
and
\[
 (m+j)^{-1/2}B\!\left(\frac{\alpha m}{m+j}\right)
 =m^{-1/2}B(\alpha)\left(1+O_K(j/m)\right).
\]
For $1\le j\le m^{1/3}$, the same Taylor expansion applied to the phase shift $\beta$ leads to
\begin{align}\label{eq:linear-tail-mode-profile}
 &\frac{\sqrt m}{H_e(\alpha)^m}(-1)^{m+j}a_{m+j}(\alpha m)\nonumber\\
 &\quad=B(\alpha)\rho(\alpha)^j
 \cos\!\bigl(\varphi_m(\alpha)+j\omega(\alpha)\bigr)
 +O_K\!\left(\frac{(1+j)^2}{m}\,\ee^{-c_Kj}\right),
\end{align}
where $c_K>0$ is independent of $m$, $j$, and $\alpha\in K$. Here we used that $\rho(\alpha)$ is uniformly bounded below one on $K$.

It remains to justify summation over the full tail. For $m^{1/3}<j\le\eta m$, the real action derivative $\log\rho(\alpha m/x)$ is bounded above by a negative constant uniformly for $\alpha\in K$, so the normalized coefficients are $O_K(\ee^{-c_Kj})$. For $k>(1+\eta)m$, the integrated action bound \eqref{eq:action-decay-integrated}, together with the positive lower bound for $\alpha$ on $K$, gives an $O_K(\ee^{-c_Km})$ contribution to the squared tail. Hence Parseval's identity and \eqref{eq:linear-tail-mode-profile} imply
\begin{align*}
 &\frac{m}{H_e(\alpha)^{2m}}\|R_m(\alpha m)\|_{2,C}^2=\frac{\pi}{2}B(\alpha)^2
 \sum_{j=1}^{\infty}\rho(\alpha)^{2j}
 \cos^2\!\bigl(\varphi_m(\alpha)+j\omega(\alpha)\bigr)
 +O_K(m^{-1})
\end{align*}
uniformly for $\alpha\in K$.

The geometric-series identity $\cos^2y=(1+\cos2y)/2$ gives \eqref{eq:profile-function-closed}. The series in \eqref{eq:profile-function-series} never vanishes: otherwise all phases $\varphi+j\omega(\alpha)$ would be congruent to $\pi/2$ modulo $\pi$, forcing $\omega(\alpha)\equiv0\pmod\pi$, contrary to $0<\omega(\alpha)<\pi$. Continuity on the compact set $K\times[0,\pi]$ therefore gives a uniform positive lower bound for $\mathcal P_\alpha$. Since $B$ is also bounded above and away from zero on $K$, taking square roots in the preceding squared-norm expansion proves \eqref{eq:L2-oscillatory-profile} with the stated $O_K(m^{-1})$ remainder.
\end{proof}

Andersson's lower-bound proof invokes standard saddle-point methods only to establish the $m$th-root lower bound, and his final explicit optimal construction gives a constant-multiple geometric upper bound without an algebraic prefactor or oscillatory profile \cite[Secs.~6 and~8]{Andersson1981}. The profile above resolves both finer scales. For $\alpha_m\downarrow0$, the growing block in Lemma~\ref{lem:phase-block}, rather than a fixed number of coefficients, supplies uniform noncancellation.

\paragraph{Returning to the half-line.}
The isometry $\mathcal E_m^{\rm rat}(t,q)=E_m(tq)$ from Proposition~\ref{prop:mobius-isometry} transfers these results to repeated-pole rational approximation.

\begin{corollary}[Restricted-denominator error laws]\label{cor:rational-error-laws}
For $t>0$, all conclusions of Corollary~\ref{cor:error-transition} hold for $\mathcal E_m^{\rm rat}(t,q_m)=E_m(tq_m)$ after setting $\lambda_m=tq_m$. In particular,
\begin{equation}\label{eq:rational-sublinear-error-law}
 q_m\sim cm^\gamma,\quad c>0,\quad0\le\gamma<1
 \quad\Longrightarrow\quad
 \frac{\log\mathcal E_m^{\rm rat}(t,q_m)}{m^{(2+\gamma)/3}}
 \longrightarrow-\frac32(tc)^{1/3}.
\end{equation}
If $tq_m/m\to\alpha\in(0,\alpha_c)$, then
\begin{equation}\label{eq:rational-linear-error-law}
 \left(\mathcal E_m^{\rm rat}(t,q_m)\right)^{1/m}\longrightarrow H_e(\alpha).
\end{equation}
For the exact family $q_m=(\alpha/t)m$, the stronger conclusion is
\begin{equation}\label{eq:rational-linear-two-sided-order}
 \mathcal E_m^{\rm rat}(t,q_m)\asymp m^{-1/2}H_e(\alpha)^m.
\end{equation}
At $q_m=m/(\sqrt2\,t)$ this becomes
\begin{equation}\label{eq:rational-optimal-strong-order}
 \mathcal E_m^{\rm rat}(t,q_m)\asymp m^{-1/2}(\sqrt2-1)^m.
\end{equation}
\end{corollary}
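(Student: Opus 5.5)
The corollary is a direct transfer, so the plan is to combine the isometry of Proposition~\ref{prop:mobius-isometry} with Corollary~\ref{cor:error-transition} and Theorem~\ref{thm:approx-error-envelope}. No new analysis of the transformed function is needed.

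First, by \eqref{eq:rational-polynomial-isometry-front}, for every $m$ and every $q_m>0$ we have the exact identity $\mathcal E_m^{\rm rat}(t,q_m)=E_m(tq_m)$. Setting $\lambda_m=tq_m$, every statement about $E_m(\lambda_m)$ in Corollary~\ref{cor:error-transition} becomes, term by term, a statement about $\mathcal E_m^{\rm rat}(t,q_m)$. This gives the first sentence of the corollary.

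Second, I check that the hypotheses translate correctly.

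\begin{itemize}
\item[(a)] If $q_m\sim cm^\gamma$ with $0\le\gamma<1$, then $\lambda_m\sim (tc)m^\gamma$. This is \eqref{eq:error-sublinear-assumption} with constant $tc$, so \eqref{eq:error-sublinear-log-rate} yields \eqref{eq:rational-sublinear-error-law} with $(tc)^{1/3}$.
\item[(b)] If $tq_m/m\to\alpha\in(0,\alpha_c)$, this is exactly \eqref{eq:error-linear-assumption}, which gives \eqref{eq:rational-linear-error-law}.
\item[(c)] For $q_m=(\alpha/t)m$ we get $\lambda_m=\alpha m$ exactly. The last part of Corollary~\ref{cor:error-transition}, which rests on \eqref{eq:uniform-error-two-sided} with $\lambda_m^{-1/6}m^{-1/3}=\alpha^{-1/6}m^{-1/2}$, gives \eqref{eq:rational-linear-two-sided-order}.
\end{itemize}

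The only point needing a moment's care is that the hypotheses \eqref{eq:general-tail-sequence-assumption} of Theorem~\ref{thm:approx-error-envelope} hold in each case. In cases (a) and (b) we have $\alpha_m\to 0$ or $\alpha_m\to\alpha<\alpha_c$, so $\alpha_m\le\alpha_0$ eventually for some $\alpha_0<\alpha_c$. Also $\Lambda_m\asymp m^{(2+\gamma)/3}$ or $\Lambda_m\asymp m$, so $\Lambda_m\to\infty$. This is already handled inside Corollary~\ref{cor:error-transition}.

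Finally, at $\alpha=\alpha_*=1/\sqrt2$, i.e.\ $q_m=m/(\sqrt2\,t)$, Proposition~\ref{prop:He-derivatives} gives $H_e(\alpha_*)=\sqrt2-1$. Substituting into \eqref{eq:rational-linear-two-sided-order} gives \eqref{eq:rational-optimal-strong-order}.

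I expect no genuine obstacle. The one thing to verify is that $\alpha_*<\alpha_c=3\sqrt3/2$, which is immediate, so the exact linear family at the optimum falls within the subcritical range covered by the earlier results.
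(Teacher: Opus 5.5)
Your proposal is correct and follows the paper's own argument. The paper proves this corollary by invoking the isometry \eqref{eq:rational-polynomial-isometry-front} with $\lambda_m=tq_m$, applying Corollary~\ref{cor:error-transition}, and using the value $H_e(\alpha_*)=\sqrt2-1$; your checks on the translated hypotheses (the constant $tc$ in the sublinear case, and $\alpha_*<\alpha_c$) are just the bookkeeping the paper leaves implicit.
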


\begin{proof}
Use the isometry \eqref{eq:rational-polynomial-isometry-front} and the value $H_e(\alpha_*)=\sqrt2-1$.
\end{proof}

Thus the classical root-rate statement can be sharpened without changing its rate function. The Gaussian saddle width produces the two-sided factor $m^{-1/2}$, while the exponential factor must remain $H_e(tq_m/m)^m$ unless the pole-to-degree ratio is fixed exactly.

\begin{remark}[Transfer to stable polynomial schemes]\label{rem:stable-transfer}
Suppose a polynomial process $\mathcal P_mF_{\lambda_m}\in\Pp_m$ satisfies
\begin{equation}\label{eq:quasibest-process}
 E_m(\lambda_m)\le
 \|F_{\lambda_m}-\mathcal P_mF_{\lambda_m}\|_\infty
 \le C_mE_m(\lambda_m),\qquad C_m\ge1.
\end{equation}
If $\log C_m=o(m^{(2+\gamma)/3})$ in the sublinear regime, or $o(m)$ in the linear regime, the corresponding logarithmic or root rate is unchanged. Chebyshev interpolation is one example, since its Lebesgue constant grows only logarithmically.
\end{remark}

\section{Precision-to-work laws and robust pole design}
\label{sec:design}

We now turn the asymptotic error law into a work law. First we invert the fixed-ratio estimate to choose a degree and pole for a prescribed tolerance. We then address a practical mismatch: in an iterative matrix-function computation the pole may be chosen from a nominal degree before the actual stopping degree is known. The ratio between those two degrees changes the effective approximation parameter.

Throughout this section, set
\begin{equation}\label{eq:kappa-def}
 \kappa(\alpha):=-\log H_e(\alpha)>0,
 \qquad 0<\alpha<\alpha_c,
\end{equation}
and recall from Proposition~\ref{prop:He-derivatives} that
\begin{equation}\label{eq:optimal-kappa-data}
 \alpha_*:=\frac{1}{\sqrt2},\;\,
 \rho_*:=H_e(\alpha_*)=\sqrt2-1,
 \;\,
 \kappa_*:=\kappa(\alpha_*)
 =\log(1+\sqrt2)=\operatorname{arsinh}(1).
\end{equation}
Here, \(\kappa(\alpha)\) is the asymptotic error reduction per degree.

\subsection{From a prescribed tolerance to degree and pole}
\label{subsec:tolerance-design}

Fix \(t>0\) and a linear pole ratio \(\alpha\in(0,\alpha_c)\). For every degree \(m\), choose
\begin{equation}\label{eq:qmalpha}
 q_m(\alpha)=\frac{\alpha m}{t},
\end{equation}
so that the transformed parameter satisfies \(tq_m(\alpha)=\alpha m\). Define the first degree in this family that meets a uniform tolerance \(\varepsilon\) by
\begin{equation}\label{eq:meps-alpha}
 m_\varepsilon(\alpha)
 :=\min\left\{m\in\mathbb N:
 \mathcal E_m^{\rm rat}(t,q_m(\alpha))\le\varepsilon\right\}.
\end{equation}
For sufficiently small \(\varepsilon\), the set in \eqref{eq:meps-alpha} is nonempty by Corollary~\ref{cor:rational-error-laws}.

\begin{theorem}[Precision-to-work law at a fixed pole ratio]\label{thm:precision-work-fixed-alpha}
Fix a compact interval \(K\Subset(0,\alpha_c)\). Uniformly for \(\alpha\in K\), as \(\varepsilon\downarrow0\),
\begin{equation}\label{eq:precision-work-fixed-alpha}
 m_\varepsilon(\alpha)
 =\frac{L_\varepsilon}{\kappa(\alpha)}
 -\frac{\log L_\varepsilon}{2\kappa(\alpha)}
 +O(1),
 \qquad
 L_\varepsilon:=\log\frac1\varepsilon.
\end{equation}
In particular,
\begin{equation}\label{eq:precision-work-leading}
 \lim_{\varepsilon\downarrow0}
 \frac{m_\varepsilon(\alpha)}{\log(1/\varepsilon)}
 =\frac1{\kappa(\alpha)}.
\end{equation}
\end{theorem}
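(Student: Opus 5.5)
We would obtain \eqref{eq:precision-work-fixed-alpha} by inverting the two-sided order law for the exact linear family. By Proposition~\ref{prop:mobius-isometry}, $\mathcal E_m^{\rm rat}(t,q_m(\alpha))=E_m(\alpha m)$. Theorem~\ref{thm:approx-error-envelope} with $\lambda_m=\alpha m$, so that $\alpha_m=\alpha$ and $\Lambda_m=\alpha^{1/3}m$, then gives constants $0<c\le C$ and a threshold $m_0$ such that
\[
 c\,m^{-1/2}\ee^{-\kappa(\alpha)m}\le E_m(\alpha m)\le C\,m^{-1/2}\ee^{-\kappa(\alpha)m},
 \qquad m\ge m_0.
\]
The first task is to make $c$, $C$ and $m_0$ uniform in $\alpha\in K$. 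For this we take $\alpha_0=\max K$ in Theorem~\ref{thm:approx-error-envelope}. The constants there depend only on $\alpha_0$. The "sufficiently large $m$" threshold comes from the uniform remainders in Theorem~\ref{thm:uniform-coeff} and Lemmas~\ref{lem:phase-block}--\ref{lem:tail-summation}, and these are controlled by $\Lambda_m\ge(\min K)^{1/3}m$. The upper factor $\lambda_m^{-1/6}m^{-1/3}=\alpha^{-1/6}m^{-1/2}$ is bounded by $(\min K)^{-1/6}m^{-1/2}$. Checking this uniformity carefully, so that the "eventually" clauses truly hold uniformly, is the main obstacle. If needed, we would re-run the block argument with $\alpha_m\equiv\alpha\in K$ and note that every constant depends only on $K$.

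Next, set $g_\alpha(m)=\kappa(\alpha)m+\tfrac12\log m$, which is increasing for $m\ge1$. The two-sided bound says that $\log(1/E_m(\alpha m))=g_\alpha(m)+O(1)$ uniformly. Define $m^\pm$ as the real solutions of $g_\alpha(m)=L_\varepsilon\mp\log C$ and $g_\alpha(m)=L_\varepsilon-\log c$, respectively. If $m\ge m^{+}$ and $m\ge m_0$, then $E_m\le C m^{-1/2}\ee^{-\kappa m}\le\varepsilon$, so $m_\varepsilon\le\lceil\max(m^+,m_0)\rceil$. Conversely, if $E_m\le\varepsilon$ with $m\ge m_0$, then $g_\alpha(m)\ge L_\varepsilon+\log c$, which forces $m\ge m^-$ up to an $O(1)$ shift.

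Degrees $m<m_0$ require a separate check. We need to rule out a small degree meeting a tiny tolerance. For each $m<m_0$ and each $\alpha\in K$, the function $F_{\alpha m}$ is not a polynomial, since it is flat at $x=-1$ but nonzero. Hence $E_m(\alpha m)>0$, and by continuity in $\alpha$ on the compact set $K$ it has a positive minimum over the finitely many pairs $(m,\alpha)$. Thus for $\varepsilon$ below that minimum no $m<m_0$ qualifies, and the case is excluded.

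Finally, we solve $g_\alpha(m)=L+O(1)$ asymptotically. Writing $m=L/\kappa+r$ gives $\kappa r+\tfrac12\log(L/\kappa+r)=O(1)$. Hence $r=-\log L/(2\kappa)+O(1)$, and the constants are uniform because $\kappa$ is bounded above and away from zero on $K$ by Proposition~\ref{prop:He-derivatives}. The ceiling costs at most $1$. Both $m^\pm$ therefore equal $L_\varepsilon/\kappa-\log L_\varepsilon/(2\kappa)+O(1)$, which proves \eqref{eq:precision-work-fixed-alpha}. Dividing by $L_\varepsilon$ gives \eqref{eq:precision-work-leading}.
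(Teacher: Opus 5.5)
Your proposal is correct and follows essentially the same route as the paper's proof. Both arguments use uniform two-sided bounds $c_Km^{-1/2}\ee^{-\kappa(\alpha)m}\le E_m(\alpha m)\le C_Km^{-1/2}\ee^{-\kappa(\alpha)m}$ for $m\ge M_0$ and $\alpha\in K$, exclude small degrees via continuity and strict positivity of $\alpha\mapsto E_m(\alpha m)$ on $K$, and then invert $\kappa(\alpha)m+\tfrac12\log m=L_\varepsilon+O(1)$. The only blemish is a sign slip in your definition of $m^\pm$: they should solve $g_\alpha(m^+)=L_\varepsilon+\log C$ and $g_\alpha(m^-)=L_\varepsilon+\log c$, but this shifts them by only $O(1)$ and does not affect the conclusion.
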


\begin{proof}
There are two points to check.  First, the first-passage degree must tend to infinity uniformly for $\alpha\in K$, so that the two-sided asymptotic bounds are available at the crossing.  Second, once this is known, the problem reduces to inverting the model scale $m^{-1/2}e^{-\kappa(\alpha)m}$.

For each fixed $m$, the map $\alpha\mapsto\mathcal E_m^{\rm rat}(t,q_m(\alpha))=E_m(\alpha m)$ is continuous and strictly positive on $K$: continuity follows from uniform continuity of $F_{\alpha m}$ in $\alpha$, and positivity because $F_{\alpha m}$ is not a polynomial of degree at most $m$. Hence, for each fixed $M$, $d_{K,M}:=\min_{\alpha\in K,\,1\le m\le M}\mathcal E_m^{\rm rat}(t,q_m(\alpha))>0$, so $\inf_{\alpha\in K}m_\varepsilon(\alpha)\to\infty$ as $\varepsilon\downarrow0$. Thus the first-passage degree eventually lies in the range where the linear-regime bounds are valid uniformly on $K$.

Indeed, Corollary~\ref{cor:rational-error-laws} and Theorem~\ref{thm:approx-error-envelope}, uniformly on compact subcritical intervals, give constants \(0<c_K\le C_K<\infty\) and \(M_0\) such that
\begin{equation}\label{eq:linear-family-two-sided}
 c_Km^{-1/2}\ee^{-\kappa(\alpha)m}
 \le \mathcal E_m^{\rm rat}(t,q_m(\alpha))
 \le C_Km^{-1/2}\ee^{-\kappa(\alpha)m}
\end{equation}
for all \(m\ge M_0\) and \(\alpha\in K\). Since \(\kappa\) is bounded above and away from zero on \(K\), set \(F_\alpha(x):=\kappa(\alpha)x+\frac12\log x\) for \(x\ge1\); it is strictly increasing. Since $m_\varepsilon(\alpha)\to\infty$ uniformly, for small $\varepsilon$ also $m_\varepsilon(\alpha)-1\ge M_0$. The lower bound at $m=m_\varepsilon(\alpha)$ gives $F_\alpha(m_\varepsilon(\alpha))\ge L_\varepsilon+\log c_K$, while minimality and the upper bound at $m=m_\varepsilon(\alpha)-1$ give $F_\alpha(m_\varepsilon(\alpha)-1)<L_\varepsilon+\log C_K$. Since $F_\alpha(m)-F_\alpha(m-1)=O(1)$ uniformly on $K$, we obtain
\[
 F_\alpha(m_\varepsilon(\alpha))=L_\varepsilon+O(1).
\]
The real equation $\kappa(\alpha)x+\frac12\log x=L_\varepsilon+O(1)$ has $x\asymp L_\varepsilon$ uniformly on $K$. Substituting $\log x=\log L_\varepsilon+O(1)$ back gives $x=L_\varepsilon/\kappa(\alpha)-\log L_\varepsilon/(2\kappa(\alpha))+O(1)$ uniformly on $K$, which proves \eqref{eq:precision-work-fixed-alpha}. Dividing by $L_\varepsilon$ gives \eqref{eq:precision-work-leading}.
\end{proof}

The leading work is minimized by maximizing \(\kappa(\alpha)\), or equivalently by minimizing \(H_e(\alpha)\). Proposition~\ref{prop:He-derivatives} hence selects the classical ratio \(\alpha_*=1/\sqrt2\).

\begin{corollary}[Asymptotically optimal tolerance-driven design]\label{cor:optimal-tolerance-design}
Choose the explicit pole family
\begin{equation}\label{eq:optimal-pole-family-design}
 q_m^*=\frac{m}{\sqrt2\,t}.
\end{equation}
Let \(m_\varepsilon^*\) be the first degree in this family for which the best restricted-denominator error is at most \(\varepsilon\). Then
\begin{equation}\label{eq:optimal-meps}
 m_\varepsilon^*
 =\frac{L_\varepsilon}{\operatorname{arsinh}(1)}
 -\frac{\log L_\varepsilon}{2\operatorname{arsinh}(1)}
 +O(1),
\end{equation}
and the corresponding pole satisfies
\begin{equation}\label{eq:optimal-qeps}
 q_\varepsilon^*:=\frac{m_\varepsilon^*}{\sqrt2\,t}
 =\frac{1}{\sqrt2\,t\operatorname{arsinh}(1)}
 \left(L_\varepsilon-\frac12\log L_\varepsilon\right)
 +O(t^{-1}).
\end{equation}
Thus the leading number of degrees needed for a prescribed half-line uniform tolerance is independent of \(t\); the time scale is absorbed by the pole location. Since $\log 10/\operatorname{arsinh}(1)\approx2.6125$, each additional decimal digit of uniform accuracy costs about \(2.61\) degrees at leading order.
\end{corollary}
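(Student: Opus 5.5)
The plan is to specialize Theorem~\ref{thm:precision-work-fixed-alpha} to the single ratio $\alpha=\alpha_*=1/\sqrt2$, taking for $K$ the degenerate compact interval $\{\alpha_*\}\Subset(0,\alpha_c)$, or any small closed interval around it. The family \eqref{eq:optimal-pole-family-design} is exactly $q_m(\alpha_*)$ from \eqref{eq:qmalpha}. Hence $m_\varepsilon^*=m_\varepsilon(\alpha_*)$, and \eqref{eq:precision-work-fixed-alpha} applies verbatim.

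Proposition~\ref{prop:He-derivatives} gives $H_e(\alpha_*)=\sqrt2-1$. Therefore $\kappa(\alpha_*)=-\log(\sqrt2-1)=\log(1+\sqrt2)=\operatorname{arsinh}(1)$, using $(\sqrt2-1)(\sqrt2+1)=1$ and $\operatorname{arsinh}(x)=\log(x+\sqrt{x^2+1})$. Substituting this value into \eqref{eq:precision-work-fixed-alpha} yields \eqref{eq:optimal-meps} directly.

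For the pole, I divide \eqref{eq:optimal-meps} by $\sqrt2\,t$. The $O(1)$ remainder in the degree then becomes $O(t^{-1})$, with an implied constant independent of $t$. This independence holds because the constants in Theorem~\ref{thm:precision-work-fixed-alpha} come from the two-sided bounds on $E_m(\alpha m)$, and these involve $t$ only through $\lambda=tq$. That gives \eqref{eq:optimal-qeps}.

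The $t$-independence of the degree count holds for the same reason. By Proposition~\ref{prop:mobius-isometry}, $\mathcal E_m^{\rm rat}(t,q_m^*)=E_m(m/\sqrt2)$ contains no $t$ at all, so $m_\varepsilon^*$ itself is independent of $t$. Finally, the digit cost is the leading coefficient $1/\kappa_*$ multiplied by $\log10$, which is $\log 10/\operatorname{arsinh}(1)\approx 2.3026/0.8814\approx2.6125$.

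I see no real obstacle, since everything reduces to Theorem~\ref{thm:precision-work-fixed-alpha} and Proposition~\ref{prop:He-derivatives}. The only point needing care is checking that the constants are uniform in $t$, and the isometry settles this.
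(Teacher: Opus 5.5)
Your proposal is correct and follows the paper's proof essentially verbatim: specialize Theorem~\ref{thm:precision-work-fixed-alpha} to $\alpha=\alpha_*$, use $\kappa_*=-\log(\sqrt2-1)=\operatorname{arsinh}(1)$ from \eqref{eq:optimal-kappa-data}, and divide by $\sqrt2\,t$ to obtain the pole formula. Your additional observation that $\mathcal E_m^{\rm rat}(t,q_m^*)=E_m(m/\sqrt2)$ makes $m_\varepsilon^*$ itself independent of $t$, so the $O(t^{-1})$ constant is uniform in $t$, is correct and sharpens the paper's one-line justification.
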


\begin{proof}
Apply Theorem~\ref{thm:precision-work-fixed-alpha} at \(\alpha=\alpha_*\) and use \eqref{eq:optimal-kappa-data}. Formula \eqref{eq:optimal-qeps} follows from \(q_\varepsilon^*=m_\varepsilon^*/(\sqrt2 t)\).
\end{proof}

The two-term law above concerns the explicit family \eqref{eq:optimal-pole-family-design}. It is useful to separate this statement from optimality over \emph{all} repeated-pole choices. Define $\mathcal E_m^{\rm conc}(t):=\inf_{q>0}\mathcal E_m^{\rm rat}(t,q)$ and $M_\varepsilon:=\min\{m:\mathcal E_m^{\rm conc}(t)\le\varepsilon\}$.

Andersson's theorem applies to arbitrary positive sequences of concentrated-pole ratios and identifies the corresponding limsup and liminf root errors with the saddle factors \cite[Sec.~2, p.~86]{Andersson1981}. If $\mathcal E_m^{\rm conc}(t)$ had a subsequence with root factor strictly below $\sqrt2-1$, we could select poles realizing that gap on the subsequence and choose the remaining poles arbitrarily. Andersson's lower-root bound would then give a contradiction. The explicit choice $q_m=m/(\sqrt2 t)$ supplies the matching upper bound; hence $(\mathcal E_m^{\rm conc}(t))^{1/m}\to\sqrt2-1$. Since $\mathcal E_m^{\rm conc}(t)$ is nonincreasing in $m$, inversion of this root limit yields
\[
 \lim_{\varepsilon\downarrow0}\frac{M_\varepsilon}{\log(1/\varepsilon)}
 =\frac1{\operatorname{arsinh}(1)}.
\]
Thus the explicit family \eqref{eq:optimal-pole-family-design} is optimal in leading degree complexity within the concentrated-pole class; the second term in \eqref{eq:optimal-meps} is the stronger conclusion available for that explicit family.

\begin{remark}[Finite-tolerance use]\label{rem:tolerance-certification}
The $O(1)$ term in the work law does not give a finite-tolerance certificate. For such a guarantee, evaluate the scalar error, or a computable bound, at integer degrees near the prediction before fixing the matrix pole. The work variable counts shifted solves; setup, orthogonalization, and inexact-solve costs require an implementation-specific model.
\end{remark}

\subsection{Stopping mismatch and robust pole design}
\label{subsec:mismatch}
\label{subsec:robust-design}

The preceding design assumes that the pole is chosen for the degree eventually used. An iterative matrix-function algorithm often proceeds in the opposite order: the pole is fixed before iteration, whereas the stopping degree is decided later from a residual or error criterion. We therefore distinguish the nominal degree \(N\), used only to select the pole, from the realized degree \(m_N\) at which the computation stops.

Let
\begin{equation}\label{eq:nominal-pole}
 q_N=\frac{\widehat\alpha N}{t},
 \qquad \widehat\alpha>0,
\end{equation}
and suppose
\begin{equation}\label{eq:realized-ratio}
 \frac{m_N}{N}\longrightarrow\vartheta>0.
\end{equation}
The transformed parameter at the realized degree is then
\begin{equation}\label{eq:effective-alpha}
 \frac{tq_N}{m_N}
 =\widehat\alpha\frac{N}{m_N}
 \longrightarrow
 \alpha_{\rm eff}:=\frac{\widehat\alpha}{\vartheta}.
\end{equation}

\begin{theorem}[Degree--pole mismatch law]\label{thm:mismatch}
Assume
\begin{equation}\label{eq:mismatch-subcritical}
 0<\alpha_{\rm eff}=\frac{\widehat\alpha}{\vartheta}<\alpha_c.
\end{equation}
Then
\begin{equation}\label{eq:mismatch-root}
 \lim_{N\to\infty}
 \left(\mathcal E_{m_N}^{\rm rat}(t,q_N)\right)^{1/m_N}
 =H_e\!\left(\frac{\widehat\alpha}{\vartheta}\right).
\end{equation}
After the M\"obius transformation, the same root factor is inherited by every polynomial approximation process satisfying the subexponential quasi-best condition of Remark~\ref{rem:stable-transfer}.
\end{theorem}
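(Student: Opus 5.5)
Via the isometry of Proposition~\ref{prop:mobius-isometry}, $\mathcal E_{m_N}^{\rm rat}(t,q_N)=E_{m_N}(\lambda_N)$ with $\lambda_N:=tq_N=\widehat\alpha N$. The plan is to reindex by the realized degree $m:=m_N$ and apply the linear-regime part of Corollary~\ref{cor:error-transition}, i.e.\ Theorem~\ref{thm:approx-error-envelope}, with parameter sequence $\lambda_{m_N}:=\widehat\alpha N$.

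\emph{Step 1: checking the hypotheses.} By \eqref{eq:realized-ratio}, $m_N\to\infty$ and $\alpha_{m_N}:=\lambda_N/m_N\to\alpha_{\rm eff}\in(0,\alpha_c)$. Fix $\alpha_0$ with $\alpha_{\rm eff}<\alpha_0<\alpha_c$. Then $\alpha_{m_N}\le\alpha_0$ for all large $N$, and $\Lambda_{m_N}=m_N\alpha_{m_N}^{1/3}\asymp m_N\to\infty$. Hence \eqref{eq:general-tail-sequence-assumption} holds along the sequence $m_N$.

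\emph{Step 2: the reindexing subtlety.} Theorem~\ref{thm:approx-error-envelope} is phrased for a sequence indexed by every large $m$, whereas $N\mapsto m_N$ need not be injective or surjective. The constants there depend only on $\alpha_0$, and every estimate (Lemmas~\ref{lem:phase-block} and \ref{lem:tail-summation}) is applied at a single pair $(m,\lambda)$ subject to $\lambda/m\le\alpha_0$ and $(\lambda m^2)^{1/3}$ large. So the two-sided bound holds pointwise at each such pair, and in particular along $(m_N,\lambda_N)$ for all large $N$. This is the one place where I would reread the proofs: "sufficiently large $m$" only ever means $\Lambda_m$ beyond a threshold depending on $\alpha_0$, so the bounds transfer directly.

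\emph{Step 3: the limit.} Step 2 gives
\[
 c\,m_N^{-1/2}H_e(\alpha_{m_N})^{m_N}\le E_{m_N}(\lambda_N)\le C\lambda_N^{-1/6}m_N^{-1/3}H_e(\alpha_{m_N})^{m_N}.
\]
Take $m_N$-th roots. The algebraic factors tend to $1$ because $\lambda_N\asymp m_N$. Continuity of $H_e$ on $(0,\alpha_c)$, which holds since $\zeta_\alpha$ is a simple root depending continuously on $\alpha$, gives $H_e(\alpha_{m_N})\to H_e(\widehat\alpha/\vartheta)$. This proves \eqref{eq:mismatch-root}.

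\emph{Step 4: quasi-best processes.} For a process satisfying \eqref{eq:quasibest-process} with $\log C_m=o(m)$, the factor $C_{m_N}^{1/m_N}\to1$, so the same root limit holds. I expect no serious obstacle; the only care needed is the uniformity argument in Step 2.
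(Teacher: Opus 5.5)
Your proposal is correct and follows the paper's route: the Möbius isometry, then the linear-regime two-sided error bounds (the paper simply cites Corollary~\ref{cor:rational-error-laws} using $\lambda_N/m_N\to\alpha_{\rm eff}$), then Remark~\ref{rem:stable-transfer} for quasi-best processes. Your Step~2 spells out the reindexing along $m_N$, which the paper leaves implicit, and you are right that every threshold there depends only on $\alpha_0$ and on $\Lambda_m$ being large.
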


\begin{proof}
At degree \(m_N\), the transformed parameter is \(\lambda_N=tq_N\), and \eqref{eq:effective-alpha} gives \(\lambda_N/m_N\to\alpha_{\rm eff}\). Equation \eqref{eq:mismatch-root} therefore follows directly from Corollary~\ref{cor:rational-error-laws}. The final assertion follows from Remark~\ref{rem:stable-transfer}.
\end{proof}

Two consequences make the mismatch law useful in practice. First, a sublinear discrepancy between nominal and realized degree does not change the optimal root factor. Second, even a small fixed relative discrepancy has only a quadratic effect when the pole was designed at the optimum.

\begin{corollary}[Stopping robustness at the classical optimum]\label{cor:stopping-robustness}
Let \(\widehat\alpha=\alpha_*=1/\sqrt2\) in \eqref{eq:nominal-pole}.
\begin{enumerate}
\item[(i)] If \(m_N=N+o(N)\), then
\begin{equation}\label{eq:sublinear-mismatch-optimal}
 \lim_{N\to\infty}
 \left(\mathcal E_{m_N}^{\rm rat}(t,q_N)\right)^{1/m_N}
 =\sqrt2-1.
\end{equation}
\item[(ii)] If \(m_N/N\to1+\epsilon\) with \(|\epsilon|\) sufficiently small, then the realized root factor satisfies
\begin{equation}\label{eq:quadratic-mismatch}
 \log H_e\!\left(\frac{\alpha_*}{1+\epsilon}\right)
 =\log(\sqrt2-1)+\frac{\epsilon^2}{5\sqrt2}+O(\epsilon^3).
\end{equation}
Thus there is no first-order loss in the asymptotic convergence factor under a small relative degree mismatch.
\end{enumerate}
\end{corollary}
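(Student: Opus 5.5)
Both parts of Corollary~\ref{cor:stopping-robustness} follow from Theorem~\ref{thm:mismatch} together with the rate-function identities in Proposition~\ref{prop:He-derivatives}; no further asymptotic analysis is needed.

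\emph{Part (i).} If $m_N=N+o(N)$, then $\vartheta=1$ in \eqref{eq:realized-ratio}, so $\alpha_{\rm eff}=\alpha_*=1/\sqrt2$. This value lies in $(0,\alpha_c)$, so \eqref{eq:mismatch-subcritical} holds. Theorem~\ref{thm:mismatch} then gives the root factor $H_e(\alpha_*)$, which equals $\sqrt2-1$ by \eqref{eq:He-minimum}.

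\emph{Part (ii).} Take $\vartheta=1+\epsilon$, so that $\alpha_{\rm eff}=\alpha_*/(1+\epsilon)$. For $|\epsilon|$ small this stays in a compact subinterval of $(0,\alpha_c)$, and Theorem~\ref{thm:mismatch} identifies the realized root factor as $H_e(\alpha_*/(1+\epsilon))$. It remains to expand $g(\epsilon):=\log H_e(\alpha_*/(1+\epsilon))$ to second order.
\begin{itemize}
\item $\log H_e$ is real-analytic on $(0,\alpha_c)$. It is the real part of $\psi_\alpha(\zeta_\alpha)$, where $\zeta_\alpha$ is a simple root of an analytic cubic, so the implicit function theorem applies. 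Hence $g$ is $C^3$ near $\epsilon=0$ and the Taylor remainder is $O(\epsilon^3)$.
\item Set $\alpha(\epsilon)=\alpha_*(1+\epsilon)^{-1}$. Then $\alpha'(0)=-\alpha_*$ and $\alpha''(0)=2\alpha_*$.
\item The chain rule gives $g'(0)=(\log H_e)'(\alpha_*)\,\alpha'(0)$. This vanishes because $(\log H_e)'(\alpha_*)=\Rea(\zeta_{\alpha_*}^2)=\Rea(i)=0$ by \eqref{eq:He-first-derivative}.
\item The same cancellation removes the $\alpha''(0)$ term in $g''(0)$. Using \eqref{eq:He-second-at-optimum},
\[
 g''(0)=(\log H_e)''(\alpha_*)\,\alpha_*^2=\frac{2\sqrt2}{5}\cdot\frac12=\frac{\sqrt2}{5}.
\]
\item Therefore $g(\epsilon)=\log(\sqrt2-1)+\tfrac12 g''(0)\epsilon^2+O(\epsilon^3)=\log(\sqrt2-1)+\frac{\sqrt2}{10}\epsilon^2+O(\epsilon^3)$. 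Since $\frac{\sqrt2}{10}=\frac{1}{5\sqrt2}$, this is \eqref{eq:quadratic-mismatch}.
\end{itemize}
The vanishing of the linear term is exactly the statement that there is no first-order loss.

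The only step needing care is the second-derivative evaluation $(\log H_e)''(\alpha_*)=2\sqrt2/5$, and that is already established in Proposition~\ref{prop:He-derivatives}.
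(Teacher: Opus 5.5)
Your proposal is correct and matches the paper's proof: part (i) is Theorem~\ref{thm:mismatch} with $\vartheta=1$ together with $H_e(\alpha_*)=\sqrt2-1$. Part (ii) is the second-order Taylor expansion of $\log H_e$ about $\alpha_*$, using $(\log H_e)'(\alpha_*)=0$, $(\log H_e)''(\alpha_*)=2\sqrt2/5$, $\alpha_*^2=1/2$, and your arithmetic $\tfrac12\cdot\tfrac{\sqrt2}{5}=\tfrac{1}{5\sqrt2}$ checks out.
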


\begin{proof}
Part (i) follows from Theorem~\ref{thm:mismatch} because \(N/m_N\to1\). For part (ii), set \(h(\alpha)=\log H_e(\alpha)\). Since \(h'(\alpha_*)=0\) and \(h''(\alpha_*)=2\sqrt2/5\) by Proposition~\ref{prop:He-derivatives}, while
\[
 \frac{\alpha_*}{1+\epsilon}-\alpha_*
 =-\alpha_*\epsilon+O(\epsilon^2),
\]
Taylor expansion gives \eqref{eq:quadratic-mismatch} because \(\alpha_*^2=1/2\).
\end{proof}

\paragraph{Robust design under uncertain stopping.}
The mismatch law also suggests a natural robust-design question. Suppose the stopping degree is not known when the pole is chosen, but prior information places the realized-to-nominal ratio in an interval,
\begin{equation}\label{eq:theta-uncertainty}
 0<\vartheta_-<\vartheta_+<\infty,
 \qquad
 \frac{m_N}{N}\in[\vartheta_-,\vartheta_+]
\end{equation}
asymptotically. A natural criterion is to minimize the worst asymptotic error reduction factor per realized degree,
\begin{equation}\label{eq:robust-objective}
 \mathfrak H(\widehat\alpha)
 :=\max_{\vartheta\in[\vartheta_-,\vartheta_+]}
 H_e\!\left(\frac{\widehat\alpha}{\vartheta}\right).
\end{equation}
This is a root-factor robustness criterion. It should not be confused with minimizing the absolute error for a fixed nominal budget \(N\), which would weight the logarithmic factor by the realized ratio \(\vartheta\) and leads to a different optimization problem. Under the change of variable $\tau=1/\vartheta$, the scalar minimax structure in \eqref{eq:robust-objective} is the same endpoint-equalization problem that appears in recent time-interval pole design \cite{GuettelShao2025,GuettelShao2026}. We do not claim the equalization principle as new; its role here is to translate uncertainty in the realized stopping degree into an effective pole ratio. The local mismatch and robust-ratio expansions below are specific to that interpretation.

\begin{theorem}[Robust pole ratio under stopping uncertainty]\label{thm:robust-pole}
Assume that there exists \(\alpha_0<\alpha_c\) such that
\begin{equation}\label{eq:robust-subcritical-condition}
 \alpha_*\frac{\vartheta_+}{\vartheta_-}<\alpha_0.
\end{equation}
Then the minimization of \eqref{eq:robust-objective} over
\begin{equation}\label{eq:robust-admissible-domain}
 0<\widehat\alpha\le\alpha_0\vartheta_-
\end{equation}
has a unique solution
\begin{equation}\label{eq:robust-alpha-interval}
 \widehat\alpha_{\rm rob}
 \in(\alpha_*\vartheta_-,\alpha_*\vartheta_+).
\end{equation}
It is characterized by the endpoint equalization condition
\begin{equation}\label{eq:robust-equalization}
 H_e\!\left(\frac{\widehat\alpha_{\rm rob}}{\vartheta_-}\right)
 =
 H_e\!\left(\frac{\widehat\alpha_{\rm rob}}{\vartheta_+}\right).
\end{equation}
The corresponding preselected physical pole is
\begin{equation}\label{eq:robust-physical-pole}
 q_N^{\rm rob}=\frac{\widehat\alpha_{\rm rob}N}{t}.
\end{equation}
\end{theorem}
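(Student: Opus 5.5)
The plan is to reduce the robust problem to the unimodality of $H_e$ established in Proposition~\ref{prop:He-derivatives}. Set $h(\alpha)=\log H_e(\alpha)$. Then $h$ is continuous on $(0,\alpha_c)$, strictly decreasing on $(0,\alpha_*]$ and strictly increasing on $[\alpha_*,\alpha_c)$. For admissible $\widehat\alpha$ in \eqref{eq:robust-admissible-domain}, the effective ratios $\widehat\alpha/\vartheta$ with $\vartheta\in[\vartheta_-,\vartheta_+]$ fill the interval $[\widehat\alpha/\vartheta_+,\widehat\alpha/\vartheta_-]\subset(0,\alpha_0]\subset(0,\alpha_c)$. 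A unimodal function attains its maximum over an interval at an endpoint. Hence
\[
 \log\mathfrak H(\widehat\alpha)=\max\{g_-(\widehat\alpha),g_+(\widehat\alpha)\},
 \qquad g_\pm(\widehat\alpha):=h(\widehat\alpha/\vartheta_\pm).
\]
This turns the inner maximization into a comparison of two explicit one-variable functions.

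Next I would compare $g_-$ and $g_+$ on three ranges. If $\widehat\alpha\le\alpha_*\vartheta_-$, both ratios lie in $(0,\alpha_*]$, so the larger value is $g_+$, evaluated at the smaller ratio $\widehat\alpha/\vartheta_+$. This value is strictly decreasing in $\widehat\alpha$, so no minimizer lies in this range except possibly at its right endpoint. If $\widehat\alpha\ge\alpha_*\vartheta_+$, both ratios are at least $\alpha_*$, the maximum is $g_-$, and $g_-$ is strictly increasing. Here \eqref{eq:robust-subcritical-condition} guarantees $\alpha_*\vartheta_+<\alpha_0\vartheta_-$, so this range actually meets the admissible domain and the middle range lies strictly inside it.

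On the middle range $I=[\alpha_*\vartheta_-,\alpha_*\vartheta_+]$, $g_-$ is strictly increasing because $\widehat\alpha/\vartheta_-\ge\alpha_*$, and $g_+$ is strictly decreasing because $\widehat\alpha/\vartheta_+\le\alpha_*$. The difference $D=g_--g_+$ is therefore strictly increasing and continuous on $I$. At the left endpoint, $D(\alpha_*\vartheta_-)=h(\alpha_*)-h(\alpha_*\vartheta_-/\vartheta_+)<0$, since $\alpha_*$ is the unique minimizer and $\vartheta_-<\vartheta_+$. Similarly, $D>0$ at the right endpoint. So $D$ has a unique zero $\widehat\alpha_{\rm rob}$ in the open interval, which gives \eqref{eq:robust-alpha-interval} and \eqref{eq:robust-equalization}.

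The maximum of the two functions equals $g_+$, which is decreasing, to the left of this zero and equals $g_-$, which is increasing, to the right of it. Combined with the monotonicity on the outer ranges, this shows that $\max\{g_-,g_+\}$ is strictly decreasing up to $\widehat\alpha_{\rm rob}$ and strictly increasing after it on the whole admissible domain. That gives existence and uniqueness of the minimizer, and \eqref{eq:robust-physical-pole} is then just the definition $q_N=\widehat\alpha N/t$.

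The main point needing care is the first step: the endpoint reduction requires every effective ratio to stay subcritical, so that $H_e$ is defined and unimodal there. That is exactly the role of the restriction \eqref{eq:robust-admissible-domain}, together with \eqref{eq:robust-subcritical-condition}, which ensures the equalization point lies inside the admissible set. Everything else is elementary monotonicity.
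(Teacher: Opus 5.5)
Your proof is correct and follows essentially the same route as the paper: you reduce the inner maximum to the two endpoint values via unimodality of $H_e$, rule out the outer ranges $\widehat\alpha\le\alpha_*\vartheta_-$ and $\widehat\alpha\ge\alpha_*\vartheta_+$ by monotonicity, and obtain the unique equalizing zero of $h(\widehat\alpha/\vartheta_-)-h(\widehat\alpha/\vartheta_+)$ in $(\alpha_*\vartheta_-,\alpha_*\vartheta_+)$ from a sign change plus strict monotonicity. The only cosmetic differences are that you get strict monotonicity of the difference directly from the monotonicity of $h$ on each side of $\alpha_*$ rather than from the sign of its derivative. You also phrase the conclusion as the objective being strictly decreasing and then strictly increasing on the whole admissible domain, which makes uniqueness slightly more explicit than in the paper.
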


\begin{proof}
If \(\widehat\alpha<\alpha_*\vartheta_-\), then every effective ratio \(\widehat\alpha/\vartheta\) lies to the left of \(\alpha_*\), where \(H_e\) is strictly decreasing. Increasing \(\widehat\alpha\) therefore decreases \eqref{eq:robust-objective}. Similarly, if \(\widehat\alpha>\alpha_*\vartheta_+\), then every effective ratio lies to the right of \(\alpha_*\), and decreasing \(\widehat\alpha\) improves the objective. Hence any minimizer must lie in the interval in \eqref{eq:robust-alpha-interval}.

For such a ratio, the effective interval $[\widehat\alpha/\vartheta_+,\widehat\alpha/\vartheta_-]$ contains \(\alpha_*\). Since \(H_e\) decreases to \(\alpha_*\) and increases thereafter, the maximum is attained at an endpoint. Define $G(\widehat\alpha):=\log H_e(\widehat\alpha/\vartheta_-)-\log H_e(\widehat\alpha/\vartheta_+)$. At \(\widehat\alpha=\alpha_*\vartheta_-\) one has \(G<0\), whereas at \(\widehat\alpha=\alpha_*\vartheta_+\) one has \(G>0\).

Throughout the interval in \eqref{eq:robust-alpha-interval}, the first argument of \(H_e\) lies to the right of \(\alpha_*\) and the second to the left. Hence $G'(\widehat\alpha)=h'(\widehat\alpha/\vartheta_-)/\vartheta_- -h'(\widehat\alpha/\vartheta_+)/\vartheta_+>0$, where $h=\log H_e$. Thus $G$ has exactly one zero. There the two endpoint errors are equal; moving the pole ratio in either direction makes one endpoint strictly worse. This proves the minimax characterization. Condition \eqref{eq:robust-subcritical-condition} keeps the whole candidate interval inside the subcritical theory.
\end{proof}

For a narrow uncertainty interval the robust ratio is close to the nominal optimum, but the first nontrivial correction is not obtained by simply centering the two degree ratios arithmetically.

\begin{corollary}[Narrow symmetric stopping interval]\label{cor:narrow-robust}
Let
\begin{equation}\label{eq:symmetric-theta-interval}
 \vartheta_- =\vartheta_0(1-\epsilon),
 \qquad
 \vartheta_+ =\vartheta_0(1+\epsilon),
 \qquad \epsilon\downarrow0.
\end{equation}
Then the robust pole ratio of Theorem~\ref{thm:robust-pole} satisfies
\begin{equation}\label{eq:robust-alpha-expansion}
 \widehat\alpha_{\rm rob}
 =\alpha_*\vartheta_0
 \left(1-\frac{107}{150}\epsilon^2+O(\epsilon^4)\right),
\end{equation}
and the worst root factor obeys
\begin{equation}\label{eq:robust-factor-expansion}
 \log\mathfrak H(\widehat\alpha_{\rm rob})
 =\log(\sqrt2-1)+\frac{\epsilon^2}{5\sqrt2}+O(\epsilon^4).
\end{equation}
\end{corollary}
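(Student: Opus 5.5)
Since $H_e$ enters only through $\widehat\alpha/\vartheta$, the equalization condition \eqref{eq:robust-equalization} is invariant under $(\widehat\alpha,\vartheta_\pm)\mapsto(\widehat\alpha/\vartheta_0,\vartheta_\pm/\vartheta_0)$. So I will set $\vartheta_0=1$ and work in logarithmic variables. Put $g(s):=h(\alpha_*\ee^{s})$ with $h=\log H_e$, and write $\widehat\alpha_{\rm rob}=\alpha_*\ee^{s_0}$. The two effective ratios are then $\alpha_*\ee^{s_0\mp\log(1\mp\epsilon)}$. Their log-midpoint and log-half-width are
\[
 \mu=s_0-\tfrac12\log(1-\epsilon^2)=s_0+\tfrac{\epsilon^2}{2}+O(\epsilon^4),
 \qquad
 d=\operatorname{artanh}\epsilon=\epsilon+\tfrac{\epsilon^3}{3}+O(\epsilon^5).
\]
Condition \eqref{eq:robust-equalization} becomes $g(\mu+d)=g(\mu-d)$, i.e.\ the odd part of $g$ about $\mu$ vanishes:
\[
 g'(\mu)\,d+\tfrac16 g'''(\mu)\,d^3+O(d^5)=0 .
\]
Since $g'(0)=0$ by Proposition~\ref{prop:He-derivatives} and $g''(0)\neq0$, the implicit function theorem (after dividing by $d$) gives a unique analytic solution $\mu(\epsilon)$. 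It is even in $\epsilon$, because $\epsilon\mapsto-\epsilon$ swaps the two endpoints. This evenness is why all remainders are $O(\epsilon^4)$ rather than $O(\epsilon^3)$. Uniqueness and identification with the minimizer come from Theorem~\ref{thm:robust-pole}, since \eqref{eq:robust-subcritical-condition} holds for small $\epsilon$.

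Solving to order $\epsilon^2$ gives
\[
 g''(0)\mu=-\tfrac16 g'''(0)\epsilon^2+O(\epsilon^4),
 \qquad\text{hence}\qquad
 s_0=-\Bigl(\tfrac12+\tfrac{g'''(0)}{6g''(0)}\Bigr)\epsilon^2+O(\epsilon^4).
\]
Then $\widehat\alpha_{\rm rob}=\alpha_*(1+s_0+O(\epsilon^4))$. The claim $107/150$ amounts to
\[
 \frac{g'''(0)}{6g''(0)}=\frac{107}{150}-\frac12=\frac{32}{150}.
\]
The derivatives of $g$ follow from the chain rule:
\[
 g''(0)=\alpha_*^2h''(\alpha_*),
 \qquad
 g'''(0)=\alpha_*^3h'''(\alpha_*)+3\alpha_*^2h''(\alpha_*).
\]
From Proposition~\ref{prop:He-derivatives}, $g''(0)=\tfrac12\cdot\tfrac{2\sqrt2}{5}=\tfrac{\sqrt2}{5}$.

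For $h'''$ I differentiate the identity $h'=\Rea\zeta_\alpha^2$ twice:
\[
 h''=\Rea(2\zeta\zeta'),
 \qquad
 h'''=\Rea\bigl(2\zeta'^2+2\zeta\zeta''\bigr),
 \qquad
 \zeta'=\frac{1}{\alpha^2(3\zeta^2-1)}
\]
by \eqref{eq:zeta-alpha-derivative}. Differentiating once more gives
\[
 \zeta''=-\zeta'^2\bigl(2\alpha(3\zeta^2-1)+6\alpha^2\zeta\zeta'\bigr).
\]
Evaluate at $\alpha_*=2^{-1/2}$ and $\zeta=\ee^{i\pi/4}$, so that $3\zeta^2-1=3i-1$ and $\zeta'=2/(3i-1)$. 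This routine complex arithmetic is where the constant $107/150$ must emerge. I expect it to be the main obstacle, purely as a bookkeeping risk. I would double-check it by differentiating the first-order equalization numerically, or by computing $g'''(0)$ directly through the chain rule in $s$.

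For the worst root factor I use $\log\mathfrak H(\widehat\alpha_{\rm rob})=g(\mu+d)$, with $\mu=O(\epsilon^2)$ and $d=\epsilon+O(\epsilon^3)$. Expanding,
\[
 g(\mu+d)=g(0)+\tfrac12 g''(0)(\mu+d)^2+\tfrac16 g'''(0)(\mu+d)^3+O(\epsilon^4).
\]
The cubic term is $\tfrac16 g'''(0)\epsilon^3+O(\epsilon^5)$. Its odd part cancels against the matching term of $g(\mu-d)$ in the average $\tfrac12[g(\mu+d)+g(\mu-d)]$, which equals $g(\mu+d)$. Equivalently, $g(\mu+d)$ is even in $\epsilon$ by the endpoint-swap symmetry, so no $\epsilon^3$ term survives. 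The surviving quadratic term is $\tfrac12\cdot\tfrac{\sqrt2}{5}\epsilon^2=\epsilon^2/(5\sqrt2)$. Together with $g(0)=\log(\sqrt2-1)$ this gives \eqref{eq:robust-factor-expansion}, and it matches Corollary~\ref{cor:stopping-robustness}(ii) at leading order.
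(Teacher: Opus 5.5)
Your argument is correct and is essentially the paper's proof. Both normalize out $\vartheta_0$, use the evenness of the equalization equation in $\epsilon$ together with the implicit function theorem (you divide by $d=\operatorname{artanh}\epsilon$, the paper by $2\epsilon$), and extract the constant from $h''(\alpha_*)$ and $h'''(\alpha_*)$ computed via $\zeta'$ and $\zeta''$. Your logarithmic midpoint/half-width variables only split the coefficient as $c_2=-\tfrac12-g'''(0)/(6g''(0))$, which is the paper's relation $1+c_2=-\alpha_*h'''(\alpha_*)/(6h''(\alpha_*))$ rewritten.

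The arithmetic you deferred does close. $\zeta''(\alpha_*)=\sqrt2(38+84i)/125$ gives $h'''(\alpha_*)=-172/125$, hence $g'''(0)=3\sqrt2/5-43\sqrt2/125=32\sqrt2/125$ and $g'''(0)/(6g''(0))=32/150$, exactly as required.
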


The expansion follows by applying the implicit-function theorem to the endpoint-equalization equation and expanding through cubic order in the stopping uncertainty. The derivative calculation is recorded in Appendix~\ref{app:robust-expansion}.

The robust objective is the asymptotic root factor \emph{per realized degree}. Matrix-dependent stopping tests can exploit more information than this scalar worst-case criterion, as discussed next.

\section{Matrix implications}
\label{sec:matrix}

The scalar theory now returns to the matrix problem that motivated it. For self-adjoint negative semidefinite matrices, the spectral theorem transfers half-line approximation bounds directly to matrix actions. The repeated pole has the complementary computational benefit that the same shifted factorization can be reused. We record first the exact worst-case interpretation and then the shift-and-invert Krylov bound.

\subsection{Scalar error and the matrix worst case}
\label{subsec:matrix-worstcase}

Let $e_r(z)=\ee^{tz}-r(z)$, where $r$ has no pole on $(-\infty,0]$.
\begin{proposition}[Spectral measure and worst-case equality]\label{prop:matrix-worstcase}
For $A=A^*\le0$, let $\mu_b(\Omega)=\langle E_A(\Omega)b,b\rangle$, where $E_A$ is its spectral resolution. Then
\begin{equation}\label{eq:spectral-measure-error}
 \|\ee^{tA}b-r(A)b\|_2^2
 =\int_{\sigma(A)}|e_r(\lambda)|^2\,d\mu_b(\lambda).
\end{equation}
Moreover, for every fixed dimension $N\ge1$,
\begin{equation}\label{eq:exact-worstcase-equality}
 \sup_{\substack{A\in\C^{N\times N},\ A=A^*\le0\\\|b\|_2=1}}
 \|\ee^{tA}b-r(A)b\|_2=\sup_{z\le0}|e_r(z)|.
\end{equation}
Taking the infimum over $r\in\mathcal R_m(q)$ gives $\mathcal E_m^{\rm rat}(t,q)$ as the exact minimax error when one rational function is chosen before the matrix and vector.
\end{proposition}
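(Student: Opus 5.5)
The plan is to use the spectral theorem for the identity \eqref{eq:spectral-measure-error}, and then to prove the worst-case equality \eqref{eq:exact-worstcase-equality} by two inequalities.

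For \eqref{eq:spectral-measure-error}, we first note that $r$ has no pole on $(-\infty,0]\supseteq\sigma(A)$. Hence $e_r$ is a bounded Borel (indeed continuous) function on $\sigma(A)$. The functional calculus therefore gives $\ee^{tA}-r(A)=e_r(A)=\int e_r(\lambda)\,dE_A(\lambda)$. In finite dimensions this is just the eigen-decomposition $A=\sum_j\lambda_jP_j$ with orthogonal spectral projectors $P_j$, so $e_r(A)b=\sum_j e_r(\lambda_j)P_jb$. Orthogonality of the $P_j$ then yields $\|e_r(A)b\|_2^2=\sum_j|e_r(\lambda_j)|^2\|P_jb\|_2^2$, which is exactly the integral against $\mu_b$, since $\mu_b(\{\lambda_j\})=\langle P_jb,b\rangle=\|P_jb\|^2$.

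For the upper bound in \eqref{eq:exact-worstcase-equality}, we use that $\mu_b$ is a positive measure of total mass $\|b\|_2^2=1$ supported in $\sigma(A)\subset(-\infty,0]$. By \eqref{eq:spectral-measure-error}, $\|e_r(A)b\|_2^2\le\sup_{z\le0}|e_r(z)|^2$.

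For the lower bound, we fix $N$. Given any $z_0\le0$, we take $A=z_0I_N$, which is self-adjoint and negative semidefinite, together with any unit vector $b$. Then $\|e_r(A)b\|_2=|e_r(z_0)|$. Taking the supremum over $z_0$ gives the reverse inequality, valid for every $N\ge1$, including $N=1$. No attainment issue arises: the supremum on the right may be approached but not attained (e.g.\ as $z\to-\infty$), and the scalar construction tracks it exactly.

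The final statement follows by taking the infimum over $r\in\mathcal R_m(q)$ on both sides of \eqref{eq:exact-worstcase-equality}. Every such $r$ has its only pole at $q>0$, so the hypothesis on $r$ holds. The resulting quantity is $\mathcal E_m^{\rm rat}(t,q)$ by \eqref{eq:best-rational-error-front}, and this is exactly the inf–sup error with $r$ chosen before $(A,b)$.

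None of these steps is a real obstacle. The only point needing care is that the eigenvalues of $A$ avoid the pole $q$; this is automatic because $q>0\ge\sigma(A)$, so $r(A)=P_m(A)(qI-A)^{-m}$ is well defined.
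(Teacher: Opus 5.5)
Your proof is correct and follows essentially the same route as the paper: the spectral theorem gives the measure identity and the upper bound, and the reverse inequality comes from testing $A=z_0I_N$ with a unit vector $b$. The only cosmetic difference is that you bound the integral against the unit-mass measure $\mu_b$, whereas the paper phrases the upper bound as $\|\ee^{tA}-r(A)\|_2=\max_{\lambda\in\sigma(A)}|e_r(\lambda)|$; the final infimum over $r\in\mathcal R_m(q)$ is handled identically.
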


\begin{proof}
The spectral theorem gives \eqref{eq:spectral-measure-error} and $\|\ee^{tA}-r(A)\|_2=\max_{\lambda\in\sigma(A)}|e_r(\lambda)|$. The reverse worst-case inequality follows by taking $A=z_0I_N$ and a unit vector $b$, then taking the supremum over $z_0\le0$.
\end{proof}

This order of quantifiers does not give a lower bound for an adaptive Krylov method. For a particular $(A,b)$, \eqref{eq:spectral-measure-error} samples only the occupied spectrum and its weights, so a matrix-dependent stopping test may terminate substantially earlier than the half-line design predicts; see also \cite{BeckermannGuettel2012,GuettelShao2025}.

\subsection{Shift-and-invert approximation and work bounds}
\label{subsec:shift-invert-space}

For $q>0$, the repeated shift-and-invert space is
\begin{equation}\label{eq:repeated-si-space}
 \mathcal Q_{m+1}(A,b;q)
 :=\operatorname{span}\{b,(qI-A)^{-1}b,\ldots,(qI-A)^{-m}b\}.
\end{equation}
This space is exactly $\{r(A)b:r\in\mathcal R_m(q)\}$. Indeed, a polynomial of degree at most $m$ in $(q-z)^{-1}$ is precisely a rational function $P_m(z)/(q-z)^m$ with $\deg P_m\le m$. Since the pole $q$ is fixed and repeated, all $m$ shifted solves involve the same matrix $qI-A$, so one factorization can be reused throughout. Thus degree $m$ requires $m$ applications of the shifted inverse, while $\dim\mathcal Q_{m+1}\le m+1$. With $V_{m+1}$ an orthonormal basis of this space, set
\begin{equation}\label{eq:si-projected-approximation}
 A_{m+1}=V_{m+1}^*AV_{m+1},\qquad
 u_m^{\rm SI}=V_{m+1}\exp(tA_{m+1})V_{m+1}^*b.
\end{equation}
Upon earlier invariant-subspace termination the approximation is exact and is understood to remain so at later degrees. This Rayleigh--Ritz rational Krylov approximation is used in Section~\ref{sec:numerics}; see \cite{MoretNovati2004,EshofHochbruck2006,Novati2011,Guettel2013}.

\begin{proposition}[Shift-and-invert error bounds]\label{prop:si-krylov-rate}
For every matrix $A=A^*\le0$ and vector $b\ne0$,
\begin{equation}\label{eq:si-nearoptimal}
 \|\ee^{tA}b-u_m^{\rm SI}\|_2\le2\|b\|_2\mathcal E_m^{\rm rat}(t,q).
\end{equation}
Consequently, if $q_m=\alpha m/t$ with $0<\alpha<\alpha_c$, then for any sequence $A_m=A_m^*\le0$ and $b_m\ne0$, of arbitrary dimensions,
\begin{equation}\label{eq:si-root-limsup}
 \limsup_{m\to\infty}\left(\frac{\|\ee^{tA_m}b_m-u_m^{\rm SI}\|_2}{\|b_m\|_2}\right)^{1/m}\le H_e(\alpha).
\end{equation}
At $\alpha=\alpha_*$, a constant $C>0$, independent of the matrix--vector sequence, gives
\begin{equation}\label{eq:si-optimal-strong-bound}
 \|\ee^{tA_m}b_m-u_m^{\rm SI}\|_2\le C\|b_m\|_2m^{-1/2}(\sqrt2-1)^m
\end{equation}
for all sufficiently large $m$.
\end{proposition}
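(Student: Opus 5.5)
The plan is to prove \eqref{eq:si-nearoptimal} by the standard near-optimality argument for Rayleigh--Ritz projections onto rational Krylov spaces. The one nontrivial ingredient is the exactness of the projected approximation on the space itself: for every $r\in\mathcal R_m(q)$ one has
\[
 r(A)b=V_{m+1}r(A_{m+1})V_{m+1}^*b .
\]
To prove this, write $r(z)$ as a polynomial of degree at most $m$ in $s=(q-z)^{-1}$. We then show by induction on $j$ that
\[
 (qI-A)^{-j}b=V_{m+1}(qI-A_{m+1})^{-j}V_{m+1}^*b,\qquad 0\le j\le m.
\]
The inductive step uses that $V_{m+1}V_{m+1}^*$ is the orthogonal projector onto $\mathcal Q_{m+1}$, which contains $b$ and all these vectors. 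First, $qI-A_{m+1}=V_{m+1}^*(qI-A)V_{m+1}$ is Hermitian positive definite, since $q>0$ and $A\le0$, hence invertible. Next, if $x=(qI-A)^{-j}b=V_{m+1}y$ with $j<m$, then $(qI-A)^{-1}x\in\mathcal Q_{m+1}$. Applying $V_{m+1}^*(qI-A)$ to $(qI-A)^{-1}x=V_{m+1}z$ gives $(qI-A_{m+1})z=y$, as required. Invariant-subspace termination is handled by the same computation on the smaller space, so nothing new is needed there.

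With exactness in hand, fix any $r\in\mathcal R_m(q)$ and split the error as
\[
 \ee^{tA}b-u_m^{\rm SI}=(\ee^{tA}-r(A))b-V_{m+1}\bigl(\ee^{tA_{m+1}}-r(A_{m+1})\bigr)V_{m+1}^*b .
\]
Both $A$ and $A_{m+1}$ are self-adjoint and negative semidefinite; the latter holds because it is a compression of $A$, so its spectrum lies in $[\lambda_{\min}(A),0]\subset(-\infty,0]$. Proposition~\ref{prop:matrix-worstcase} then bounds each term by $\|b\|_2\sup_{z\le0}|\ee^{tz}-r(z)|$, using $\|V_{m+1}\|_2=\|V_{m+1}^*\|_2\le1$. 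Taking the infimum over $r$ gives \eqref{eq:si-nearoptimal}.

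The asymptotic statements then follow directly. With $q_m=\alpha m/t$ we have $tq_m/m=\alpha$, so Corollary~\ref{cor:rational-error-laws} gives
\[
 \mathcal E_m^{\rm rat}(t,q_m)\le C_\alpha m^{-1/2}H_e(\alpha)^m
\]
for all large $m$. Since \eqref{eq:si-nearoptimal} carries no dependence on the matrix, the vector, or the dimension, dividing by $\|b_m\|_2$ and taking $m$th roots gives \eqref{eq:si-root-limsup}, because $(2C_\alpha m^{-1/2})^{1/m}\to1$. Specializing to $\alpha=\alpha_*$ and using $H_e(\alpha_*)=\sqrt2-1$ from Proposition~\ref{prop:He-derivatives} yields \eqref{eq:si-optimal-strong-bound} with $C=2C_{\alpha_*}$.

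The only step needing care is the exactness lemma, specifically the invertibility of the compressed shift and the inductive use of the projector. I would handle both exactly as above, via positive definiteness of $qI-A_{m+1}$. Everything after that is bookkeeping with results already established in the paper.
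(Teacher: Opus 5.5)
Your proposal is correct and follows the paper's proof essentially step for step. It uses the exactness identity $r(A)b=V_{m+1}r(A_{m+1})V_{m+1}^*b$, obtained from the recursion $(qI-A_{m+1})V_{m+1}^*(qI-A)^{-j}b=V_{m+1}^*(qI-A)^{-(j-1)}b$, the same two-term error splitting bounded via the spectral theorem, and then Corollary~\ref{cor:rational-error-laws} for the asymptotic claims, differing only in that you state explicitly the invertibility of $qI-A_{m+1}$ (by positive definiteness), which the paper uses tacitly.
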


\begin{proof}
Write $V=V_{m+1}$, $T=V^*AV$, and $D=(qI-A)^{-1}$. Since $D^jb$ lies in the range of $V$ for $0\le j\le m$, the vectors $c_j=V^*D^jb$ satisfy
\[
 (qI-T)c_j=V^*(qI-A)D^jb=c_{j-1},\qquad 1\le j\le m.
\]
Thus $D^jb=V(qI-T)^{-j}V^*b$, and hence $r(A)b=Vr(T)V^*b$ for every $r\in\mathcal R_m(q)$. It follows that
\[
 \ee^{tA}b-u_m^{\rm SI}
 =(\ee^{tA}-r(A))b-V(\ee^{tT}-r(T))V^*b.
\]
Both $A$ and $T$ are self-adjoint and negative semidefinite, so the spectral theorem bounds each term by the scalar uniform error times $\|b\|_2$. Taking the infimum over $r\in\mathcal R_m(q)$ proves \eqref{eq:si-nearoptimal}; Corollary~\ref{cor:rational-error-laws} gives the remaining claims.
\end{proof}

\begin{corollary}[Solve budget and stopping mismatch]\label{cor:matrix-work-budget}
There are constants $C_0\in\R$ and $\varepsilon_0>0$, independent of $A=A^*\le0$ and $b\ne0$, such that $\|\ee^{tA}b-u_m^{\rm SI}\|_2\le\varepsilon\|b\|_2$ for $0<\varepsilon<\varepsilon_0$ whenever
\begin{equation}\label{eq:matrix-work-budget}
 m\ge\left\lceil\frac{L_\varepsilon-\tfrac12\log L_\varepsilon}{\operatorname{arsinh}(1)}+C_0\right\rceil,
 \qquad q=\frac{m}{\sqrt2\,t},\qquad L_\varepsilon=\log(1/\varepsilon).
\end{equation}
Alternatively, suppose $q_N=\widehat\alpha N/t$ is fixed before iteration and the realized degree satisfies $m_N/N\to\vartheta>0$, with $0<\widehat\alpha/\vartheta<\alpha_c$. For any matrix--vector sequence $(A_N,b_N)$ as above,
\begin{equation}\label{eq:matrix-mismatch-limsup}
 \limsup_{N\to\infty}\left(\frac{\|\ee^{tA_N}b_N-u_{m_N,N}^{\rm SI}\|_2}{\|b_N\|_2}\right)^{1/m_N}
 \le H_e(\widehat\alpha/\vartheta),
\end{equation}
where $u_{m_N,N}^{\rm SI}$ is \eqref{eq:si-projected-approximation} for the $N$th problem. In particular, $\widehat\alpha=\alpha_*$ and $m_N=N+o(N)$ preserve the bound $\sqrt2-1$; the quadratic penalty for a small fixed relative mismatch is given by Corollary~\ref{cor:stopping-robustness}.
\end{corollary}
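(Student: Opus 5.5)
For the first claim I would combine Proposition~\ref{prop:si-krylov-rate} with the two-sided linear-family bound \eqref{eq:linear-family-two-sided} at $\alpha=\alpha_*$. By \eqref{eq:si-nearoptimal}, for every $A=A^*\le0$ and $b\ne0$ we have $\|\ee^{tA}b-u_m^{\rm SI}\|_2\le 2\|b\|_2\mathcal E_m^{\rm rat}(t,q_m^*)$ with $q_m^*=m/(\sqrt2 t)$. The right-hand side does not involve $A$ or $b$. By Corollary~\ref{cor:rational-error-laws}, or directly by \eqref{eq:linear-family-two-sided} with $K=\{\alpha_*\}$, there are $C_*$ and $M_0$ with $\mathcal E_m^{\rm rat}(t,q_m^*)\le C_*m^{-1/2}\ee^{-\kappa_*m}$ for $m\ge M_0$. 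It therefore suffices to show $2C_*m^{-1/2}\ee^{-\kappa_*m}\le\varepsilon$ whenever $m$ satisfies \eqref{eq:matrix-work-budget} with a suitable $C_0$.

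The model function $F(x)=\kappa_*x+\frac12\log x$ is increasing for $x\ge1$, and the condition is $F(m)\ge L_\varepsilon+\log(2C_*)$. Set $x_0=(L_\varepsilon-\frac12\log L_\varepsilon)/\kappa_*+C_0$. Then
\[
F(x_0)=L_\varepsilon-\tfrac12\log L_\varepsilon+\kappa_*C_0+\tfrac12\log x_0 .
\]
Since $x_0\ge L_\varepsilon/(2\kappa_*)$ for small $\varepsilon$, we get $\frac12\log x_0\ge\frac12\log L_\varepsilon-\frac12\log(2\kappa_*)$. Hence $F(x_0)\ge L_\varepsilon+\kappa_*C_0-\frac12\log(2\kappa_*)$. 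Choosing $C_0$ with $\kappa_*C_0\ge\log(2C_*)+\frac12\log(2\kappa_*)$ gives the condition at $x_0$, and by monotonicity for every $m\ge x_0$. Then $\varepsilon_0$ is taken small enough that $x_0\ge\max(M_0,1)$ and $x_0\ge L_\varepsilon/(2\kappa_*)$. No constant depends on $(A,b)$ or on the dimension, because the only matrix input is \eqref{eq:si-nearoptimal}.

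For the mismatch statement I would apply \eqref{eq:si-nearoptimal} to the $N$th problem at degree $m_N$ with pole $q_N$. This gives
\[
\|\ee^{tA_N}b_N-u^{\rm SI}_{m_N,N}\|_2/\|b_N\|_2\le2\,\mathcal E_{m_N}^{\rm rat}(t,q_N).
\]
Taking $m_N$-th roots, $2^{1/m_N}\to1$ because $m_N\to\infty$, which follows from $m_N/N\to\vartheta>0$. Theorem~\ref{thm:mismatch} then supplies the root limit $H_e(\widehat\alpha/\vartheta)$ of the right side, which yields \eqref{eq:matrix-mismatch-limsup}. The particular case $\widehat\alpha=\alpha_*$, $m_N=N+o(N)$ is Corollary~\ref{cor:stopping-robustness}(i). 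The quadratic penalty statement is just a pointer to \eqref{eq:quadratic-mismatch}.

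The only step needing care is the inversion bookkeeping for the first claim. The $O(1)$ term in Theorem~\ref{thm:precision-work-fixed-alpha} concerns the first-passage degree, whereas here we need a sufficient condition valid for all larger $m$. The monotone model function $F$ handles this, together with the uniform upper bound valid for all $m\ge M_0$ rather than only at the crossing.
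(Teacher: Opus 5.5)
Your proof is correct and follows the paper's own argument. The paper likewise gets the first claim by inverting the bound \eqref{eq:si-optimal-strong-bound}, which is \eqref{eq:si-nearoptimal} plus the linear-family upper bound at $\alpha_*$, using the same increasing model function $\kappa_*x+\frac12\log x$ as in Theorem~\ref{thm:precision-work-fixed-alpha} and absorbing the factor $2$ into $C_0$. It gets the mismatch bound by combining \eqref{eq:si-nearoptimal} with Theorem~\ref{thm:mismatch} and taking roots. Your explicit bookkeeping, a sufficient condition valid for every $m\ge x_0$ rather than a statement about the first-passage degree, just spells out what the paper compresses into ``invert as in Theorem~\ref{thm:precision-work-fixed-alpha}''.
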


\begin{proof}
Invert \eqref{eq:si-optimal-strong-bound} as in Theorem~\ref{thm:precision-work-fixed-alpha}; the factor $2$ in \eqref{eq:si-nearoptimal}, together with other multiplicative constants, affects only $C_0$. For \eqref{eq:matrix-mismatch-limsup}, combine \eqref{eq:si-nearoptimal} with Theorem~\ref{thm:mismatch}; taking roots removes the factor \(2\).
\end{proof}

The budget is independent of mesh size even when $\|A_h\|_2\to\infty$, but factorization and solve costs need not be; finite-tolerance certification is discussed in Remark~\ref{rem:tolerance-certification}. The self-adjoint assumption is essential: for nonnormal matrices, spectral location alone does not control the matrix-function error.

\section{Numerical verification}
\label{sec:numerics}

The experiments are designed to test the refined asymptotic statements, not to benchmark a new matrix algorithm. Reference coefficients are evaluated from the horizontally deformed integral of Section~\ref{sec:uniform-saddle}, with the dominant saddle action factored out; the saddle formula itself is used only for the predicted curves. All computations use double precision. Further implementation details, including quadrature orders, tail cutoffs, angle grids, reorthogonalization, and the discrete-sine reference, are available with the numerical code upon request. The reported values are finite-precision consistency checks, not interval-certified enclosures.

\subsection{Coefficient and error asymptotics}
\label{subsec:num-coeff}
\label{subsec:num-strong-errors}

\begin{figure}[!htbp]
 \centering
 \includegraphics[width=0.68\textwidth]{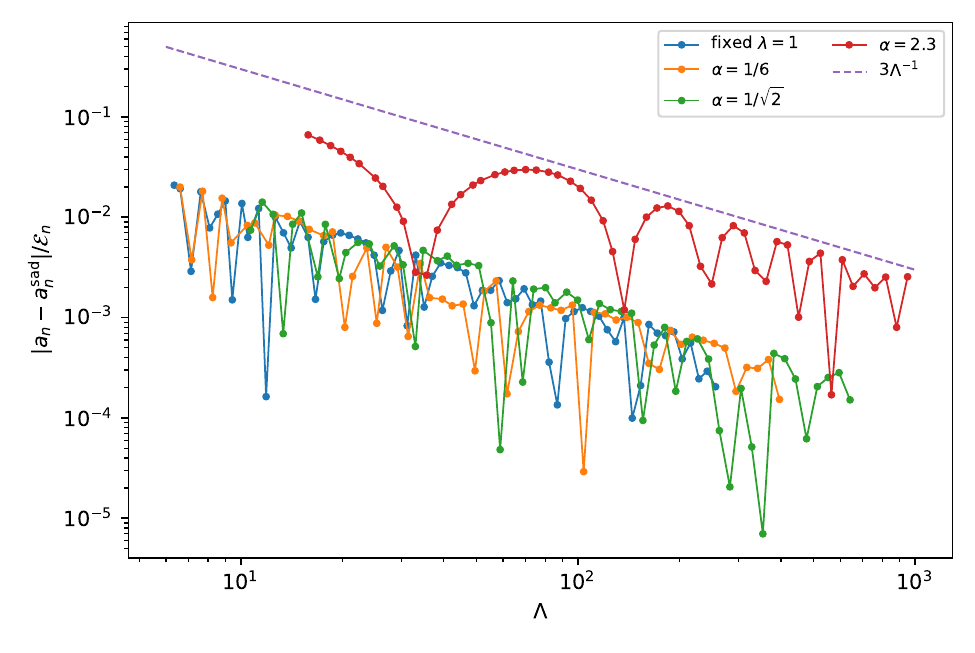}
 \caption{Normalized error of the leading two-saddle coefficient formula in the fixed-pole, optimal linear, and two additional fixed-ratio regimes. Downward spikes reflect cancellation in the oscillatory first correction. The dashed reference line is $3\Lambda^{-1}$.}
 \label{fig:num-coeff-residual}
\end{figure}

Let $a_n^{\rm sad}$ denote the leading two-saddle term in \eqref{eq:uniform-leading}, and let
\[
 \mathcal E_n=n^{-1/2}B(\alpha_n)H_e(\alpha_n)^n
\]
be the nonoscillatory envelope in \eqref{eq:coefficient-envelope}. Theorem~\ref{thm:uniform-coeff} predicts
\begin{equation}\label{eq:num-normalized-coeff-residual}
 \frac{|a_n-a_n^{\rm sad}|}{\mathcal E_n}=O(\Lambda_n^{-1}),
 \qquad
 \Lambda_n=(\lambda_n n^2)^{1/3}.
\end{equation}

Figure~\ref{fig:num-coeff-residual} tests this estimate for fixed $\lambda=1$ and for the linear ratios $\alpha=1/6$, $1/\sqrt2$, and $2.3$, the last lying near the coalescence value $3\sqrt3/2\approx2.598$. Because the first neglected saddle correction is oscillatory, the residual need not approach a smooth multiple of $\Lambda^{-1}$. The compensated residual $\Lambda_n|a_n-a_n^{\rm sad}|/\mathcal E_n$ nevertheless remains bounded in all four sequences, with maximum $2.43$ over the sampled ranges at $\alpha=2.3$, and no constant is fitted.

\paragraph{Prefactor-resolved errors.}
For the linear family $\lambda_m=\alpha m$, Proposition~\ref{prop:l2-oscillatory-profile} gives more than an $m$th-root limit: after division by $m^{-1/2}H_e(\alpha)^m$, the weighted-$L^2$ projection error follows an explicit bounded oscillatory profile. Figure~\ref{fig:num-l2-profile} compares the independently computed tail norm with the closed formula \eqref{eq:profile-function-closed}; no constant or phase is fitted. Agreement improves with $m$ for all three ratios, including $\alpha=2.3$, where convergence is slower near coalescence. The normalized error therefore remains bounded without converging to one constant.

\begin{figure}[!htbp]
 \centering
 \includegraphics[width=0.68\textwidth]{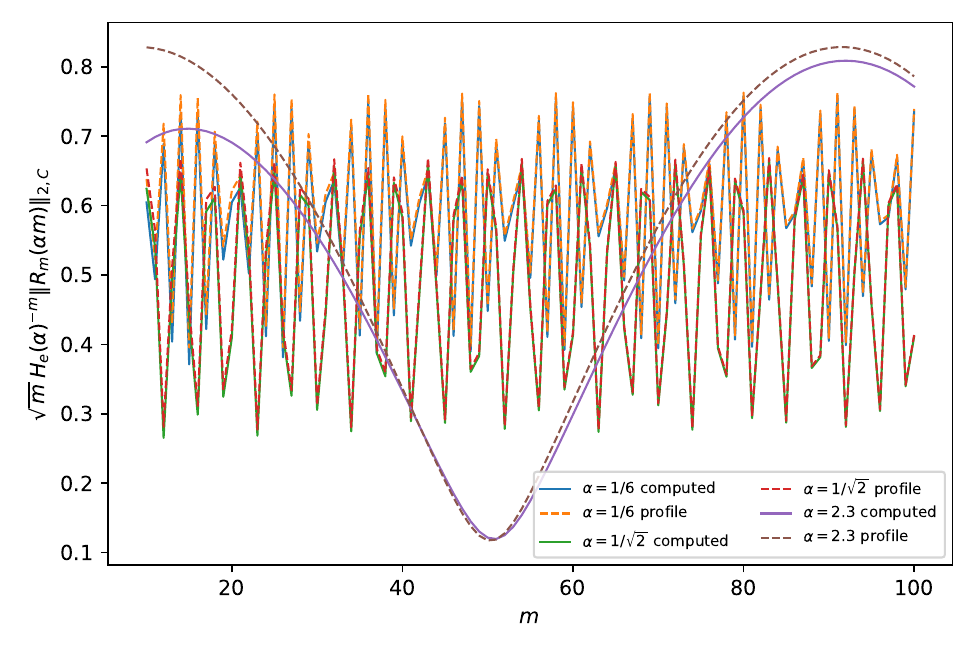}
 \caption{The normalized weighted-$L^2$ projection error (solid curves) and the explicit oscillatory profile in Proposition~\ref{prop:l2-oscillatory-profile} (dashed curves) for three exact linear pole ratios.}
 \label{fig:num-l2-profile}
\end{figure}

The projection experiment does not compute the best uniform error. Instead, Theorem~\ref{thm:approx-error-envelope} gives the rigorous bracket
\[
 \pi^{-1/2}\|R_m(\alpha m)\|_{2,C}
 \le E_m(\alpha m)
 \le \|R_m(\alpha m)\|_\infty.
\]
Figure~\ref{fig:num-uniform-bounds} evaluates the two endpoint norms for $\alpha=1/6$, $1/\sqrt2$, and $2.3$. Their bounded oscillation illustrates $E_m(\alpha m)\asymp m^{-1/2}H_e(\alpha)^m$; the figure does not identify the projection error with the minimax error.

\begin{figure}[!htbp]
 \centering
 \includegraphics[width=0.68\textwidth]{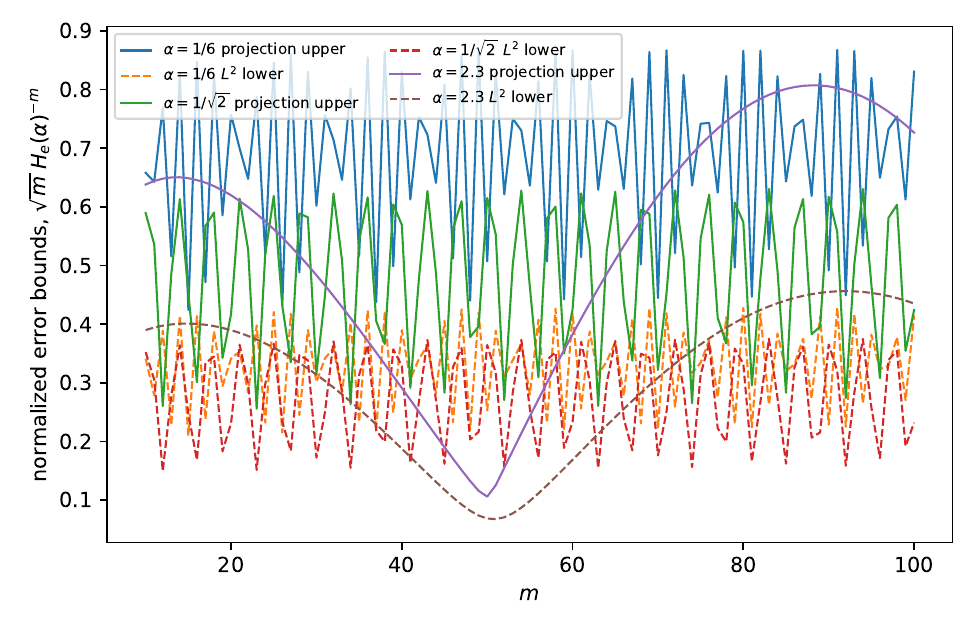}
 \caption{Numerical evaluations of the normalized endpoints in the rigorous best-error bracket at three exact linear pole ratios, including $\alpha=1/\sqrt2$. The dashed curves approximate $\pi^{-1/2}\|R_m\|_{2,C}$ and the solid curves approximate the Chebyshev projection errors; the corresponding exact endpoint norms bracket the unknown best error.}
 \label{fig:num-uniform-bounds}
\end{figure}

\subsection{Transition and work laws}
\label{subsec:num-transition}
\label{subsec:num-design}

Table~\ref{tab:num-transition} tests the logarithmic and root limits in Corollary~\ref{cor:error-transition}. For fixed $\lambda=1$ the predicted limit is $3/2$. For $\lambda_m=\frac18m^{1/2}$ it is $\frac32(1/8)^{1/3}=3/4$. In the optimal linear case, compensating by $\sqrt m$ isolates the root factor $\sqrt2-1$.

\begin{table}[!htbp]
 \centering
 \caption{Scaled Chebyshev projection errors in the fixed, sublinearly moving, and linearly moving regimes.}
 \label{tab:num-transition}
 \footnotesize
 \renewcommand{\arraystretch}{1.15}
 \begin{tabular}{@{}llllc@{}}
 \toprule
 Regime & scaled quantity & degrees $m$ & observed values & limit \\
 \midrule
 $\lambda=1$
 & $-m^{-2/3}\log\|R_m\|_\infty$
 & $80,320,1280$ & $1.5594,1.5371,1.5166$ & $1.5000$ \\
 $\lambda_m=\frac18m^{1/2}$
 & $-m^{-5/6}\log\|R_m\|_\infty$
 & $80,320,1280$ & $0.7922,0.7578,0.7501$ & $0.7500$ \\
 $\lambda_m=m/\sqrt2$
 & $(\sqrt m\,\|R_m\|_\infty)^{1/m}$
 & $80,320,640$ & $0.4113,0.4135,0.4139$ & $0.4142$ \\
 \bottomrule
 \end{tabular}
 \normalsize
\end{table}

The sublinear and linear values are already close to their limits at the largest displayed degrees. The fixed-$\lambda$ convergence is visibly slower because an algebraic prefactor contributes an $O(\log m/m^{2/3})$ correction after the scaled logarithm is taken; the remaining offset at $m=1280$ matches this correction.

\paragraph{Tolerance-driven work and stopping uncertainty.}
We next test the inverse design law at $t=1$ and $\alpha_*=1/\sqrt2$. Table~\ref{tab:num-precision-work} uses the computable Chebyshev projection error rather than the unknown best error. By Theorem~\ref{thm:approx-error-envelope}, both have the same $m^{-1/2}\ee^{-\kappa_*m}$ two-sided scale, so their first-passage degrees obey the same two-term law up to $O(1)$. The table compares the observed projection first passage with $m^{(1)}_\varepsilon=\log(1/\varepsilon)/\operatorname{arsinh}(1)$ and $m^{(2)}_\varepsilon=m^{(1)}_\varepsilon-\log\log(1/\varepsilon)/(2\operatorname{arsinh}(1))$. No fitted constant is used; the logarithmic correction removes most of the finite-degree bias.

\begin{table}[!htbp]
 \centering
 \caption{Precision-to-work prediction at the optimal pole ratio. The observed degree is the first $m$ for which the Chebyshev projection error is at most $\varepsilon$.}
 \label{tab:num-precision-work}
 \footnotesize
 \renewcommand{\arraystretch}{1.12}
 \begin{tabular}{ccccc}
 \toprule
 $\varepsilon$ & observed $m$ & $m^{(1)}_\varepsilon$ & $m^{(2)}_\varepsilon$ & observed error \\
 \midrule
 $10^{-4}$  & $9$  & $10.45$ & $9.19$  & $4.47\times10^{-5}$ \\
 $10^{-6}$  & $14$ & $15.67$ & $14.19$ & $7.17\times10^{-7}$ \\
 $10^{-8}$  & $19$ & $20.90$ & $19.25$ & $4.19\times10^{-9}$ \\
 $10^{-10}$ & $23$ & $26.12$ & $24.35$ & $8.38\times10^{-11}$ \\
 $10^{-12}$ & $29$ & $31.35$ & $29.47$ & $8.58\times10^{-13}$ \\
 \bottomrule
 \end{tabular}
 \normalsize
\end{table}

The local robust-ratio expansion is also accurate at moderate uncertainty. For $\vartheta\in[0.9,1.1]$, the exact equalizing ratio is $0.702053$, compared with $0.702063$ from \eqref{eq:robust-alpha-expansion}; for $[0.8,1.2]$, the values are $0.686780$ and $0.686931$. The improvement in the worst root factor is modest, consistent with quadratic flatness.

\subsection{A matrix-action illustration}
\label{subsec:num-matrix}

\begin{figure}[!htbp]
 \centering
 \includegraphics[width=0.68\textwidth]{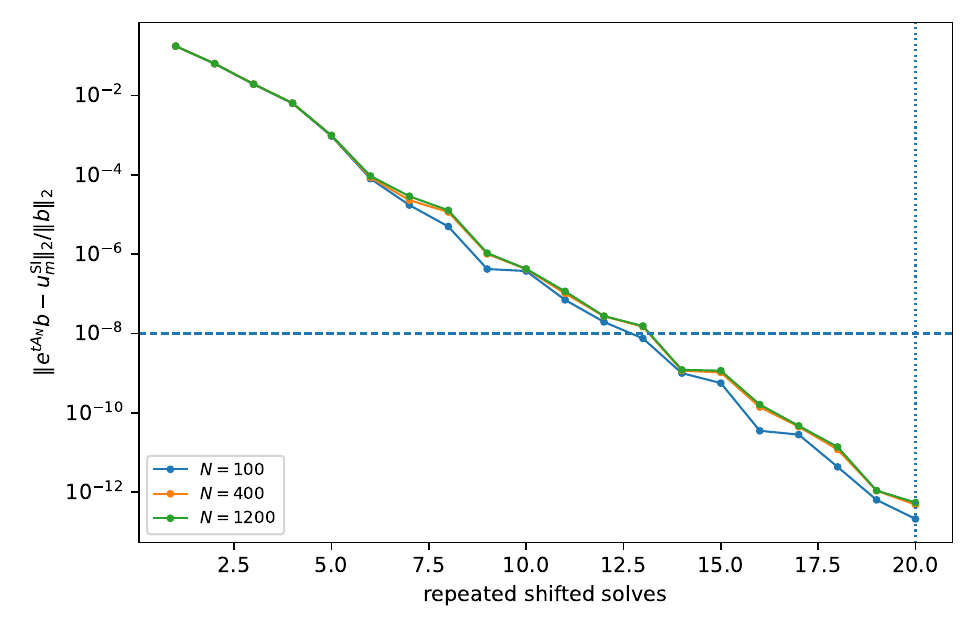}
 \caption{Input-normalized errors $\|\ee^{tA_N}b-u_m^{\rm SI}\|_2/\|b\|_2$ of the projected shift-and-invert Krylov approximation \eqref{eq:si-projected-approximation} for three grid sizes. The same repeated pole is used throughout; the dashed and dotted lines mark the tolerance and nominal budget.}
 \label{fig:num-matrix-action}
\end{figure}

Finally, consider the negative Dirichlet finite-difference Laplacian $A_N=(N+1)^2\operatorname{tridiag}(1,-2,1)$ on $(0,1)$. We take $t=10^{-2}$, $\varepsilon=10^{-8}$, and the normalized deterministic vector with entries proportional to $\exp[-100(x_j-0.35)^2]$, $x_j=j/(N+1)$. The two-term law gives $m^{(2)}_\varepsilon=19.25$, so we choose the nominal budget $20$ and $q=20/(\sqrt2\,t)=1414.2136$.
At each step we use the projected approximation \eqref{eq:si-projected-approximation}; one factorization of $qI-A_N$ is reused, and the reference action is evaluated by the exact discrete-sine diagonalization. Figure~\ref{fig:num-matrix-action} shows $\|\ee^{tA_N}b-u_m^{\rm SI}\|_2/\|b\|_2$ for $N=100,400,1200$. Since $b$ is normalized, these are also the absolute errors reported by the code; they are not normalized by $\|\ee^{tA_N}b\|_2$. The tolerance is reached after $13$, $14$, and $14$ repeated solves, respectively, while the errors at $m=20$ are $2.12\times10^{-13}$, $4.84\times10^{-13}$, and $5.48\times10^{-13}$. Thus the same half-line pole and nominal budget remain effective under mesh refinement, whereas the particular spectral measures permit earlier stopping than the matrix-independent worst-case design.

\section{Concluding remarks}
\label{sec:conclusion}

The paper connects three scales usually treated separately. A fixed repeated pole gives the flat-endpoint stretched exponential, a pole proportional to the degree gives Andersson's geometric regime, and one uniform two-saddle formula describes the transition between them before coalescence. The technical point that makes this possible is uniform control of the amplitude, phase, and relative saddle remainder as the pole-to-index ratio tends to zero.

Coefficient asymptotics alone are not enough: two saddle contributions can cancel at isolated indices. The growing phase block shows that such cancellation cannot suppress the whole relevant tail, producing the prefactor-resolved two-sided law $E_m(\alpha m)\asymp m^{-1/2}H_e(\alpha)^m$ and an explicit bounded oscillatory profile for the weighted $L^2$ projection error.

These estimates make the pole law operational, as they give a two-term precision-to-work formula, quantify degree--pole mismatch, and lead to a simple robust design under stopping uncertainty. For self-adjoint negative semidefinite matrices, the scalar restricted-denominator error is a worst-case matrix-action benchmark for fixed rational functions, while the corresponding shift-and-invert Krylov approximation inherits dimension-independent bounds.

Two natural asymptotic problems remain outside the present analysis.  Passing through saddle coalescence requires an Airy-type local model, and for a fixed physical pole the present analysis does not yet provide a sharp algebraic prefactor in the uniform norm; there the weighted lower bound and absolute-tail upper bound do not yet meet.  Nonnormal matrices require a different approximation geometry and are likewise beyond the scope of the half-line result proved here.

\appendix

\section{Technical details for the uniform saddle estimates}
\label{app:saddle-details}

\subsection{Horizontal saddle contour}
\begin{proof}
Using $w^2+\delta^2=(w+i\delta)(w-i\delta)$, the exact scaled integrand is
\[
 \delta\,\ee^{-\Lambda w^2}
 \frac{(w+i\delta)^{n-1}}{(w-i\delta)^{n+1}}.
\]
Its only pole is at $w=i\delta$, so the strip $-v\le\Ima w\le0$ is pole-free. On vertical sides of a rectangular deformation the Gaussian contributes $\exp[-\Lambda((\Rea w)^2-(\Ima w)^2)]$, while the remaining factor is algebraic; hence the vertical contributions vanish and the contour may be shifted to $\R-iv$.

Taking moduli gives \eqref{eq:Gdelta}, and differentiation gives \eqref{eq:Gprime}. At $x=u$, the saddle relations \eqref{eq:uv-relations} imply
\[
 [u^2+(v-\delta)^2][u^2+(v+\delta)^2]=4v^2(4v^2-\delta^2)=2v,
\]
so the bracket in \eqref{eq:Gprime} vanishes there. Its denominator is strictly increasing in $x^2$, which yields the asserted sign pattern and makes $\pm u$ the only global maxima; the exceptional point $v=\delta$, $x=0$ is the regular point discussed in Remark~\ref{rem:removable-zero}. The limit \eqref{eq:G0} follows from \eqref{eq:delta-zero-phase-amplitude} and $v(\delta)\to1/2$.

Finally, \eqref{eq:real-second-derivative} is uniformly negative at the saddles on $0\le\delta\le\delta_0$, and $u(\delta)$ stays uniformly positive. A fixed quadratic neighborhood therefore has a parameter-independent action drop. Compactness extends this drop over every bounded part of the contour outside the two saddle neighborhoods. On the tails, expansion of \eqref{eq:Gdelta} gives $G_\delta(x)=-x^2+O(1)$ uniformly in $0\le\delta\le\delta_0$ (with \eqref{eq:G0} at $\delta=0$). This proves the uniform gap \eqref{eq:uniform-gap}.
\end{proof}

\subsection{Uniform local saddle estimate}
\begin{proof}
Since $\delta_0<\delta_c$, the compact saddle curve $w_+(\delta)$ stays away from the origin and $\pm i\delta$. Thus $\Phi_\delta$ and $A_\delta$ are uniformly analytic in one fixed neighborhood of the curve. By \eqref{eq:real-second-derivative}, $\Rea\Phi_\delta''(w_+)\le-c_0<0$ uniformly; after fixing a sufficiently small $\eta$, Taylor's theorem gives
\[
 \Rea\{\Phi_\delta(w_++s)-\Phi_\delta(w_+)\}\le-c_1s^2,
 \qquad |s|\le2\eta,
\]
with $c_1$ independent of $\delta$.

Set $s=\Lambda^{-1/2}\xi$. Uniform Taylor expansions of phase and amplitude give, on $|\xi|\le\Lambda^{1/12}$,
\[
 A_\delta(w_++s)\ee^{\Lambda[\Phi_\delta(w_++s)-\Phi_\delta(w_+)]}
 =\ee^{\Phi_\delta''(w_+)\xi^2/2}
 \left[A_\delta(w_+)+\Lambda^{-1/2}P_\delta(\xi)
 +O\!\left(\Lambda^{-1}(1+|\xi|^6)\right)\right],
\]
where $P_\delta$ is odd and the remainder is uniform. Since the cutoff is even, the $\Lambda^{-1/2}$ term integrates to zero. The quadratic bound makes the displayed remainder uniformly integrable and renders the rest of the cutoff region exponentially small. Extending the leading Gaussian to $\R$ therefore yields \eqref{eq:uniform-local-asymptotic} with an absolute relative correction $O(\Lambda^{-1})$. Finally, the saddle equation gives $A_\delta(w_+)=iw_+$, which is bounded away from zero on the compact saddle curve; the absolute estimate is therefore the uniform relative bound \eqref{eq:uniform-local-remainder}.
\end{proof}

\section{Local expansion for the robust stopping design}
\label{app:robust-expansion}

\begin{proof}[Corollary~\ref{cor:narrow-robust}]
Let $h=\log H_e$ and $y(\epsilon)=\widehat\alpha_{\rm rob}/\vartheta_0$. Endpoint equalization becomes
\[
 h\!\left(\frac{y}{1-\epsilon}\right)
 =h\!\left(\frac{y}{1+\epsilon}\right).
\]
After division by $2\epsilon$, the difference extends analytically and evenly to $\epsilon=0$, where it equals $yh'(y)$. Since $h'(\alpha_*)=0$ and $\alpha_*h''(\alpha_*)\ne0$, the implicit-function theorem gives a unique even analytic branch $y(\epsilon)$ through $\alpha_*$. Hence
$y(\epsilon)=\alpha_*[1+c_2\epsilon^2+O(\epsilon^4)]$.

From \eqref{eq:zeta-alpha-derivative}, one more differentiation at $\alpha_*=1/\sqrt2$ and $\zeta_{\alpha_*}=e^{i\pi/4}$ gives
\[
 \zeta_{\alpha_*}'=-\frac{1+3i}{5},\qquad
 \zeta_{\alpha_*}''=\frac{\sqrt2(38+84i)}{125},\qquad
 h'''(\alpha_*)=-\frac{172}{125}.
\]
Expanding the equalization equation through order $\epsilon^3$ yields
\[
 2\alpha_*^2h''(\alpha_*)(1+c_2)
 +\frac13\alpha_*^3h'''(\alpha_*)=0,
\]
so $c_2=-107/150$ and \eqref{eq:robust-alpha-expansion} follows. Substitution into either equalized endpoint, followed by Taylor expansion at $\alpha_*$, gives \eqref{eq:robust-factor-expansion}; even analyticity supplies the $O(\epsilon^4)$ remainder.
\end{proof}

\section*{Declarations}

\paragraph{Funding}
The first author was supported in part by the National Science Foundation under grant DMS-2608905. 

\paragraph{Author contributions}
Both authors contributed to the conception of the study, mathematical analysis, numerical verification, and preparation of the manuscript. Both authors reviewed and approved the final manuscript.

\paragraph{Data and code availability}
The numerical data and code used to produce the results in Section~\ref{sec:numerics} are available from the authors upon reasonable request.

\paragraph{Competing interests}
The authors declare that they have no competing interests.

\paragraph{Use of artificial intelligence tools}
OpenAI's ChatGPT, including GPT-5.6 and GPT-6 models, were used to suggest candidate derivations, assist with algebraic and consistency checks, compare exposition with relevant literature, and review numerical code. They were also used for language and presentation improvements. All mathematical arguments and computational results reported in the manuscript were independently checked and verified by the authors. All citations were verified against the original sources. The authors take full responsibility for the content of the manuscript.

\end{document}